\newcommand{\R}{\mathbb{R}}
\newcommand{\C}{\mathbb{C}}
\newcommand{\Z}{\mathbb{Z}}
\newcommand{\E}{\mathbb{E}}
\newcommand{\calR}{\mathcal{R}}
\newcommand{\bla}{\big \langle}
\newcommand{\bra}{\big \rangle}
\numberwithin{equation}{section}
\newcommand{\ud}[0]{\,\mathrm{d}}
\newcommand{\esssup}[0]{\operatornamewithlimits{ess\,sup}}
\newcommand{\BMO}[0]{\operatorname{BMO}}
\newcommand{\bmo}[0]{\operatorname{bmo}}
\newcommand{\loc}[0]{\operatorname{loc}}
\newcommand{\calD}[0]{\mathcal{D}}
\newcommand{\wt}[1]{{\widetilde{#1}}}
\theoremstyle{plain}
\newtheorem{thm}[equation]{Theorem}
\newtheorem{lem}[equation]{Lemma}
\theoremstyle{definition}
\theoremstyle{remark}
\newtheorem{rem}[equation]{Remark}
\author{Kangwei Li}
\address[K.L.]{BCAM (Basque Center for Applied Mathematics), Alameda de Mazarredo 14, 48009 Bilbao, Spain} 
\email{kli@bcamath.org}
\author{Henri Martikainen}
\address[H.M.]{Department of Mathematics and Statistics, University of Helsinki, P.O.B. 68, FI-00014 University of Helsinki, Finland}
\email{henri.martikainen@helsinki.fi}
\author{Emil Vuorinen}
\address[E.V.]{Department of Mathematics and Statistics, University of Helsinki, P.O.B. 68, FI-00014 University of Helsinki, Finland}
\email{emil.vuorinen@helsinki.fi}
\title[Bilinear bi-parameter commutators]{Commutators of bilinear bi-parameter singular integrals}
\subjclass[2010]{42B20}
\keywords{Calder\'on--Zygmund operators, bi-parameter analysis, bilinear analysis, commutators, dyadic shifts, model operators, representation theorems, multipliers} 
\begin{document}

\begin{abstract}
We study the boundedness properties of commutators formed by $b$ and $T$, where
$T$ is a bilinear bi-parameter singular integral satisfying natural $T1$ type conditions and $b$ is a little BMO function.
For paraproduct free bilinear bi-parameter singular integrals $T$ we prove that
$[b, T]_1 \colon L^p(\mathbb{R}^{n+m}) \times L^q(\mathbb{R}^{n+m}) \to L^r(\mathbb{R}^{n+m})$ in the full range
$1 < p, q \le \infty$, $1/2 < r < \infty$ satisfying $1/p+1/q = 1/r$. A special case is when $T$ is a bilinear bi-parameter multiplier.
We also prove the corresponding Banach range result
for all singular integrals satisfying the $T1$ type conditions. In doing so we simplify the corresponding linear proof. 
Lastly, we prove analogous results for iterated commutators.
\end{abstract}

\maketitle

\section{Introduction}
This paper concerns commutator estimates for general bilinear bi-parameter singular integrals $T$. Examples of such operators
include the bilinear bi-parameter multiplier operators $T_m$ studied in Muscalu--Pipher--Tao--Thiele \cite{MPTT}:
$$
T_m(f_1, f_2)(x) = \iint_{\R^{n+m}} \iint_{\R^{n+m}} m(\xi, \eta) \widehat f_1(\xi) \widehat f_2(\eta) e^{2\pi i x \cdot (\xi + \eta)} \ud \xi \ud \eta,
$$
where
$$
|\partial^{\alpha_1}_{\xi_1} \partial^{\alpha_2}_{\xi_2}  \partial^{\beta_1}_{\eta_1}  \partial^{\beta_2}_{\eta_2}  m(\xi, \eta)| \lesssim
(|\xi_1| + |\eta_1|)^{-|\alpha_1| - |\beta_1|} (|\xi_2| + |\eta_2|)^{-|\alpha_2| - |\beta_2|}. 
$$
See Coifman--Meyer \cite{CM} and Grafakos--Torres \cite{GT} for the one-parameter theory of such multipliers.
A general definition of a (not necessarily of tensor
product or convolution type) bilinear bi-parameter singular integral was given in our previous paper \cite{LMV}.
There we showed a dyadic representation theorem under $T1$ type assumptions, and used it to conclude various boundedness
properties, including weighted estimates $L^p(w_1) \times L^q(w_2) \to L^r(v_3)$, where
$1 < p, q < \infty$, $1/2 < r < \infty$, $1/p+1/q = 1/r$,
$w_1 \in A_p(\R^n \times \R^m)$, $w_2 \in A_q(\R^n \times \R^m)$ and $v_3 := w_1^{r/p} w_2^{r/q}$.
Here we complement
these results and provide further use for our recent bilinear bi-parameter representation theorem by proving commutator estimates. A very special case of our results
implies that
$$
\|[b,T_m]_1(f_1, f_2)\|_{L^r(\R^{n+m})} \lesssim \|b\|_{\bmo(\R^{n+m})} \|f_1\|_{L^p(\R^{n+m})} \|f_2\|_{L^q(\R^{n+m})} 
$$
for all $1 < p, q \le \infty$ and $1/2 < r < \infty$ satisfying $1/p+1/q = 1/r$, where $b$ is in little BMO, $T_m$ is a
bi-parameter multiplier and $[b,T_m]_1(f_1,f_2) := bT_m(f_1, f_2) - T_m(bf_1, f_2)$. Our main theorem for the first order commutator is:
\begin{thm}
Let $1 < p,q \le \infty$ and $1/2 < r < \infty$ satisfy $1/p + 1/q = 1/r$, and let $b \in \bmo(\R^{n+m})$.
Suppose $T$ is a bilinear bi-parameter Calder\'on--Zygmund operator satisfying all the structural assumptions and all the
boundedness and cancellation assumptions as formulated in Section 3 of \cite{LMV}. Then
$$
\|[b, T]_1(f_1, f_2)\|_{L^r(\R^{n+m})} \lesssim \|f_1\|_{L^p(\R^{n+m})} \|f_2\|_{L^q(\R^{n+m})}
$$
if $p,q \ne \infty$ and $r > 1$. If $T$ is free of paraproducts (so that it has a representation with shifts only),
then the same bound holds in the full range.
\end{thm}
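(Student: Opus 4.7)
The plan is to invoke the bilinear bi-parameter dyadic representation theorem from \cite{LMV}, which writes $T$ as an expectation over random dyadic grids of averages of bilinear bi-parameter model operators, namely shifts and (full or partial) paraproducts, with quantitative decay in their complexity. Since $b$ is fixed, the same representation applies to $[b,T]_1$ as an expectation of commutators $[b,U]_1$, and it thus suffices to bound each $[b,U]_1 \colon L^p \times L^q \to L^r$ linearly in $\|b\|_{\bmo}$ with at most polynomial growth in the complexity of $U$, so that the re-summation and averaging step of \cite{LMV} carries through unchanged.

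For each model operator $U$, I would expand $b$ in the product Haar basis attached to the dyadic grids used by $U$ and split the expansion into near/far pieces relative to each cube of $U$, carried out separately in the two parameters $\R^n$ and $\R^m$. This decomposition produces a sum of two types of terms: (i) modified bilinear bi-parameter shifts whose coefficients absorb local Haar data of $b$, and (ii) residual bilinear bi-parameter paraproducts. The quantitative input from $b \in \bmo(\R^{n+m})$ is the $\BMO(\R^n)$-Carleson estimate for the slice $b(\cdot,y)$, uniform in $y \in \R^m$, and its symmetric counterpart for slices in the other parameter, which is precisely what the little BMO norm controls. Plugging these estimates into the bilinear bi-parameter shift and paraproduct bounds of \cite{LMV} gives the desired estimate for each piece, and the approach should in fact simplify the corresponding linear proof because the near/far splitting is very clean in the model setting.

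The principal obstacle is the endpoint behaviour, meaning the $L^\infty$ input endpoints $p = \infty$ or $q = \infty$ and the quasi-Banach output endpoint $r \le 1$. In the general case some residual pieces produced by the commutator calculation are genuine bi-parameter paraproducts, which satisfy neither the $L^\infty$ endpoint bound nor a useful quasi-Banach bound; this mirrors the well-known failure of the commutator endpoint for \BMO\ paraproducts already in the one-parameter theory. Consequently the best one can prove for a general $T$ satisfying only the structural and $T1$-type assumptions of \cite{LMV} is the Banach range $p, q \ne \infty$ and $r > 1$.

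If $T$ is paraproduct-free, the representation furnishes only shifts, and the commutator decomposition of a shift can be arranged so that every resulting piece is again a bilinear bi-parameter shift of modified type, carrying enough cancellation to fall within the full-range estimates of \cite{LMV}. This is precisely the mechanism behind the improved range $1 < p, q \le \infty$, $1/2 < r < \infty$ under the paraproduct-free hypothesis. In the actual write-up I would treat the paraproduct-free case first, so as to isolate the clean shift calculus and record the full-range bound, and then handle the additional paraproduct-generated terms needed for the general theorem in the Banach range.
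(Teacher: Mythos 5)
Your Banach range outline is essentially the paper's: represent $T$ by dyadic model operators, expand the commutator of each model operator using martingale/paraproduct decompositions of $b f$, control the paraproduct pieces by the little BMO hypothesis, and pay a polynomial factor in complexity (coming from averages of $b$ over comparable rectangles) that is absorbed by the exponential decay in the representation. The paper's actual expansion protocol is more careful than a near/far splitting of $b$ alone -- it expands the product $bf$ bi-parameter-wise only against $h_I\otimes h_J$, one-parameter-wise against $h_I^0\otimes h_J$, and not at all against $h_I^0\otimes h_J^0$ -- but this is a difference of implementation, not of substance, and in the range $1<p,q<\infty$, $r>1$ your argument can be made to work for all model operators at once, which is exactly what Theorem \ref{thm:com1ofmodelBanach} does.

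The genuine gap is in the paraproduct-free, full-range part, in two places. First, it is not true that the commutator expansion of a bilinear bi-parameter shift produces only ``modified shifts'' falling under the full-range bounds of \cite{LMV}: bilinear shifts already contain non-cancellative Haar functions, and the expansion produces compositions of the shift with the paraproducts $A_i(b,\cdot)$ and $a_i^j(b,\cdot)$, terms controlled only through maximal operators such as $\varphi_{\calD^m,b}$ and $M_b$, and terms carrying the factor $\langle b\rangle_{Q\times R}-\langle b\rangle_{I\times J}$. None of these has shift-normalised coefficients, the operators $A_i(b,\cdot)$, $a_i^j(b,\cdot)$ are not bounded on $L^\infty$, and for $r\le 1$ there is no duality available, so one cannot simply cite the estimates of \cite{LMV}; a new argument is needed (the paper runs a restricted weak type argument exploiting the localisation property \eqref{eq:localisation} together with weighted square-function bounds, then interpolates and gets $p=\infty$, $q=\infty$ by duality at the very end). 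Second, your statement that ``the re-summation and averaging step of \cite{LMV} carries through unchanged'' fails precisely when $r\le 1$: the $L^r$ quasi-norm does not commute with the expectation $\E_{\omega,\omega'}$, so bounding each $[b,S_{\omega,\omega'}]_1$ individually does not bound $[b,T]_1$. This is why Theorem \ref{thm:com1ofmodelQuasiBanach} is stated for the averaged operator $\E_{\omega,\omega'}[b,S_{\omega,\omega'}]_1$ and the restricted weak type argument is built on the deterministic square functions of Lemma \ref{lem:DetSquareShift}, which carry the expectation inside. Relatedly, your explanation of why the general case is confined to the Banach range is off: the partial and full paraproducts themselves \emph{are} bounded in the full range in \cite{LMV}; the obstruction is that the restricted weak type machinery for averaged commutators is currently unavailable for partial paraproducts, not a one-parameter-style endpoint failure of BMO paraproducts.
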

\noindent We also obtain similar results for iterated commutators like $[b_2, [b_1, T]_1]_2$.

Regarding the extremely vast theory of commutators, we focus here only on the story of upper estimates (which are also relevant for the lower estimates)
in the multi-parameter settings.
This setting is inherently much more demanding than the one-parameter setting. For example, the lack of a satisfying theory of sparse domination, on which
many modern one-parameter proofs are based on, demands different proofs. The idea has been to rely on representation theorems such as the bi-parameter representation theorem \cite{Ma1} by one of us (or the multi-parameter generalisation of this by Y. Ou \cite{Ou}).
Ou, Petermichl and Strouse proved in \cite{OPS}  that $[b,T] \colon L^2(\R^{n+m}) \to L^2(\R^{n+m})$, when $T$ is a paraproduct
free bi-parameter singular integral. This was eventually generalised to concern all bi-parameter singular integrals satisfying $T1$ conditions
by Holmes--Petermichl--Wick \cite{HPW} -- in fact, they prove a more general Bloom type two-weight bound. For a more comprehensive account of
commutators in the multi-parameter setup see the introductions of \cite{OPS} and \cite{HPW}. 

In this paper we go after bilinear variants of these bi-parameter upper bound estimates for commutators. We point out to
the introduction of the recent paper \cite{KO} for an account of multilinear commutator estimates in the one-parameter setting.
Our proof now relies on the recent bilinear bi-parameter representation \cite{LMV}.
Compared to the linear case one of the additional difficulties lies in obtaining quasi--Banach estimates, which are in general a challenge to obtain in the bi-parameter setting even when no commutators are present:
e.g. in \cite{MPTT} -- see also \cite{MPTT2} and \cite{MS} -- the main
challange was to obtain quasi--Banach estimates for $T_m$. Moreover, bilinear model operators have more non-cancellation present,
which is a complication in the commutator setting.

The main challenge in going from \cite{OPS} to \cite{HPW} appeared to be
that estimates for $[b,S]$, where $S$ is a bi-parameter shift, were easier to obtain than for $[b, P]$, where $P$ is some other dyadic model operator (namely a full paraproduct or a partial paraproduct) appearing in the representation \cite{Ma1}. We imagine that the presence of non-cancellative Haar functions
$h_I^0$ (as opposed to cancellative Haar functions $h_I$) in the paraproducts was probably the main issue for the authors.

In the bilinear situation, however, non-cancellative Haar functions appear already in shifts. Moreover, we need an argument that can be
iterated in a reasonable way and one that can be used in restricted weak type arguments, so we needed to develop a clear general method.
Our guideline is to expand $bf$ using bi-parameter martingales in $\langle bf, h_{I} \otimes h_J\rangle$, using one-parameter martingales in
$\langle bf, h_{I}^0 \otimes h_J\rangle$ (or $\langle bf, h_I \otimes h_J^0\rangle$), and not to expand at all in $\langle bf, h_{I}^0 \otimes h_J^0\rangle$.
When working like this it appears that in the linear situation, or in the bilinear Banach range theory, there is no large difference what model operator we have, which leads to a relevant simplification. It appears to us that in \cite{HPW} everything was always reduced to a so called remainder term, which essentially entails expanding $bf$ in the bi-parameter sense
in all of the above situations. However, this remainder term has a particularly nice structure only when there are no non-cancellative Haar functions.

In the bilinear situation only when proving the Banach range boundedness are we able to obtain a unified proof that works
for all model operators. We are currently unable to produce weighted estimates for bilinear commutators, and so our quasi-Banach estimates
are now based on restricted weak type considerations. We currently only know how to do restricted weak type arguments for shifts and full paraproducts, but not for partial paraproducts. This is the case even when we are considering the operators themselves and not commutators of them. However,
in \cite{LMV} we were able to prove weighted bounds for partial paraproducts, and these can be extrapolated, so we did not require
restricted weak type arguments for partial paraproducts there. But for quasi--Banach commutator bounds we would now require them.
That is why we restrict our quasi--Banach commutator estimates to shifts, and therefore to paraproduct free singular integrals.

When running the restricted weak type argument for $[b,S]_1$, where $S$ is a bilinear bi-parameter shift,
we need to exploit the good localisation properties of bi-parameter paraproducts
as expansions of commutators essentially produce compositions of model operators and paraproducts. Furthermore, we need to be
careful so that we can move the estimates from the model operators to singular integrals, as the presence of averaging in the dyadic representation makes this a somewhat delicate business in the quasi--Banach range. Iteration requires further care to maintain the localisation properties.

We conclude by quickly giving some general references of multilinear multi-parameter analysis not connected to commutators.
For the classical linear theory of multi-parameter analysis see e.g. 
Chang and Fefferman \cite{CF1, CF2}, Fefferman \cite{Fe}, Fefferman and Stein \cite{FS}, and Journ\'e \cite{Jo1, Jo2}.
In Journ\'e \cite{Jo3} some bounds for tensor products of multilinear singular integrals are obtained.
Deep multilinear multi-parameter theory appears e.g. in the already mentioned paper Muscalu--Pipher--Tao--Thiele \cite{MPTT}, where
the quasi-Banach estimates for the multipliers $T_m$ was the main question. See also Benea--Muscalu \cite{BM1, BM2} and Di Plinio--Ou \cite{DO}.
Among many other things, these papers contain some generalisations of \cite{MPTT}, including mixed-norm type bounds. See also \cite{LMV}, where
we we proved the representation and used it to generalise many of the above results to concern completely general singular integrals. See also the book \cite{MS} by Muscalu and Schlag for a wonderful introduction to multilinear multi-parameter analysis.

\subsection*{Acknowledgements}
K. Li is supported by Juan de la Cierva - Formaci\'on 2015 FJCI-2015-24547, by the Basque Government through the BERC
2018-2021 program and by Spanish Ministry of Economy and Competitiveness
MINECO through BCAM Severo Ochoa excellence accreditation SEV-2013-0323
and through project MTM2017-82160-C2-1-P funded by (AEI/FEDER, UE) and
acronym ``HAQMEC''.

H. Martikainen is supported by the Academy of Finland through the grants 294840 and 306901, the three-year research grant 75160010 of the University of Helsinki,
and is a member of the Finnish Centre of Excellence in Analysis and Dynamics Research.

E. Vuorinen is supported by the Academy of Finland through the grant 306901 and by the Finnish Centre of Excellence in Analysis and Dynamics Research.

\section{Basic definitions}
\subsection{Vinogradov notation}
We denote $A \lesssim B$ if $A \le CB$ for some absolute constant $C$. We allow the exponent $C$ to depend on the dimension of the underlying spaces, on integration exponents, and on various other constants appearing in the assumptions. We denote $A \sim B$ if $B \lesssim A \lesssim B$.

\subsection{Dyadic notation}
If $Q$ is a cube:
\begin{itemize} 
\item $\ell(Q)$ is the side-length of $Q$;
\item $\text{ch}(Q)$ denotes the dyadic children of $Q$;
\item If $Q$ is in a dyadic grid, then $Q^{(k)}$ denotes the unique dyadic cube $S$ in the same grid so that $Q \subset S$ and $\ell(S) = 2^k\ell(Q)$;
\end{itemize}

In this paper we denote a dyadic grid in $\R^n$ by $\calD^n$ and a dyadic grid in $\R^m$ by $\calD^m$.
Using the above notation $\calD^n_i$ denotes those $I \in \calD^n$ for which $\ell(I) = 2^{-i}$. The measure of a cube $I$ is simply denoted by $|I|$ no matter in what dimension we are in.

When $I \in \calD^n$ we denote by $h_I$ a cancellative $L^2$ normalised Haar function. This means the following.
Writing $I = I_1 \times \cdots \times I_n$ we can define the Haar function $h_I^{\eta}$, $\eta = (\eta_1, \ldots, \eta_n) \in \{0,1\}^n$, by setting
\begin{displaymath}
h_I^{\eta} = h_{I_1}^{\eta_1} \otimes \cdots \otimes h_{I_n}^{\eta_n}, 
\end{displaymath}
where $h_{I_i}^0 = |I_i|^{-1/2}1_{I_i}$ and $h_{I_i}^1 = |I_i|^{-1/2}(1_{I_{i, l}} - 1_{I_{i, r}})$ for every $i = 1, \ldots, n$. Here $I_{i,l}$ and $I_{i,r}$ are the left and right
halves of the interval $I_i$ respectively. If $\eta \ne 0$ the Haar function is cancellative: $\int h_I^{\eta} = 0$. We usually suppress the presence of $\eta$
and simply write $h_I$ for some $h_I^{\eta}$, $\eta \ne 0$.

For $I \in \calD^n$ and a locally integrable function $f\colon \R^n \to \C$, we define the martingale difference
$$
\Delta_I f = \sum_{I' \in \textup{ch}(I)} \big[ \bla f \bra_{I'} -  \bla f \bra_{I} \big] 1_{I'}.
$$
Here $\bla f \bra_I = \frac{1}{|I|} \int_I f$. We also sometimes write $E_I f = \bla f \bra_I 1_I$.
Now, we have $\Delta_I f = \sum_{\eta \ne 0} \langle f, h_{I}^{\eta}\rangle h_{I}^{\eta}$, or suppressing the $\eta$ summation, $\Delta_I f = \langle f, h_I \rangle h_I$, where $\langle f, h_I \rangle = \int f h_I$.

A martingale block is defined by
$$
\Delta_{K,i} f = \mathop{\sum_{I \in \calD^n}}_{I^{(i)} = K} \Delta_I f, \qquad K \in \calD^n.
$$

\subsection{Weights}
We have some use for weighted estimates even though they are not part of the main results.
A weight $w(x_1, x_2)$ (i.e. a locally integrable a.e. positive function) belongs to $A_p(\R^n \times \R^m)$, $1 < p < \infty$, if
$$
[w]_{A_p(\R^n \times \R^m)} := \sup_{R} \bla w \bra_R \bla w' \bra_R^{p-1} < \infty,
$$
where the supremum is taken over rectangles $R \subset \R^{n+m}$ and $w' := w^{1-p'}$. We have
\begin{equation}\label{eq:prodap}
[w]_{A_p(\R^n\times \R^m)} \sim \max\big( \esssup_{x_1 \in \R^n} \,[w(x_1, \cdot)]_{A^p(\R^m)}, \esssup_{x_2 \in \R^m}\, [w(\cdot, x_2)]_{A^p(\R^n)} \big).
\end{equation}
Of course, $A_p(\R^n)$ is defined similarly as $A_p(\R^n \times \R^m)$ -- just take the supremum over cubes. For the basic theory
of bi-parameter weights consult e.g. \cite{HPW}.

\subsection{Bi-parameter notation}
We work in the bi-parameter setting in the product space $\R^{n+m}$.
In such a context $x = (x_1, x_2)$ with $x_1 \in \R^n$ and $x_2 \in \R^m$.

We often need to take integral pairings with respect to one of the two variables only.
For example, if $f \colon \R^{n+m} \to \C$, then $\langle f, h_I \rangle_1 \colon \R^{m} \to \C$ is defined by
$$
\langle f, h_I \rangle_1(x_2) = \int_{\R^n} f(y_1, x_2)h_I(y_1)\ud y_1.
$$

Next, we define bi-parameter martingale differences. Let $f \colon \R^n \times \R^m \to \C$ be locally integrable.
Let $I \in \calD^n$ and $J \in \calD^m$. We define the martingale difference
$$
\Delta_I^1 f \colon \R^{n+m} \to \C, \Delta_I^1 f(x) := \Delta_I (f(\cdot, x_2))(x_1).
$$
Define $\Delta_J^2f$ analogously, and also define $E_I^1$ and $E_J^2$ similarly.
We set
$$
\Delta_{I \times J} f \colon \R^{n+m} \to \C, \Delta_{I \times J} f(x) = \Delta_I^1(\Delta_J^2 f)(x) = \Delta_J^2 ( \Delta_I^1 f)(x).
$$
Notice that $\Delta^1_I f = h_I \otimes \langle f , h_I \rangle_1$, $\Delta^2_J f = \langle f, h_J \rangle_2 \otimes h_J$ and
$ \Delta_{I \times J} f = \langle f, h_I \otimes h_J\rangle h_I \otimes h_J$ (suppressing the finite $\eta$ summations).
We record the following standard lemma.
\begin{lem}\label{lem:mest}
For all $p \in (1,\infty)$ and $w \in A_p(\R^n \times \R^m)$ it holds that
\begin{align*}
\| f \|_{L^p(w)}
& \sim_{[w]_{A_p(\R^n \times \R^m)}} \Big\| \Big( \mathop{\sum_{I \in \calD^n}}_{J \in \calD^m} |\Delta_{I \times J} f|^2 \Big)^{1/2} \Big\|_{L^p(w)} \\
&\sim_{[w]_{A_p(\R^n \times \R^m)}}  \Big\| \Big(  \sum_{I \in \calD^n} |\Delta_I^1 f|^2 \Big)^{1/2} \Big\|_{L^p(w)} \\
&\sim_{[w]_{A_p(\R^n \times \R^m)}} \Big\| \Big(  \sum_{J \in \calD^m} |\Delta_J^2 f|^2 \Big)^{1/2} \Big\|_{L^p(w)}.
\end{align*}
\end{lem}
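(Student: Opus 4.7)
The plan is to reduce all three claimed equivalences to the classical one-parameter weighted Littlewood--Paley square function estimate, applied in each variable separately by integrating out the other. The key input is \eqref{eq:prodap}, which guarantees that for any $w \in A_p(\R^n \times \R^m)$ the one-dimensional slices $w(\cdot,x_2)$ and $w(x_1,\cdot)$ lie in the one-parameter classes $A_p(\R^n)$ and $A_p(\R^m)$ respectively, with $A_p$ constants controlled by $[w]_{A_p(\R^n\times\R^m)}$ for a.e. $x_2$ and $x_1$.

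For the second displayed equivalence, I would begin with the Fubini identity
$$
\|f\|_{L^p(w)}^p = \int_{\R^m} \|f(\cdot,x_2)\|_{L^p(w(\cdot,x_2))}^p \ud x_2,
$$
and apply the scalar weighted Littlewood--Paley inequality on $\R^n$ with weight $w(\cdot,x_2)$ to obtain
$$
\|f(\cdot,x_2)\|_{L^p(w(\cdot,x_2))} \sim \Big\|\Big(\sum_{I \in \calD^n} |\Delta_I^1 f(\cdot,x_2)|^2\Big)^{1/2}\Big\|_{L^p(w(\cdot,x_2))}
$$
uniformly in $x_2$, the implicit constants depending only on $[w(\cdot,x_2)]_{A_p(\R^n)} \lesssim [w]_{A_p(\R^n\times\R^m)}$. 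Raising to the $p$-th power and integrating in $x_2$ yields the desired estimate. The third equivalence is obtained by the same argument with the roles of the two variables swapped.

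For the bi-parameter equivalence, by the step just performed it remains to show
$$
\Big\|\Big(\sum_{I \in \calD^n} |\Delta_I^1 f|^2\Big)^{1/2}\Big\|_{L^p(w)}
\sim \Big\|\Big(\sum_{I \in \calD^n, J \in \calD^m} |\Delta_{I\times J} f|^2\Big)^{1/2}\Big\|_{L^p(w)},
$$
using that $\Delta_I^1$ and $\Delta_J^2$ commute so $\Delta_J^2 \Delta_I^1 f = \Delta_{I\times J}f$. Fixing $x_1$ and applying the \emph{vector-valued} one-parameter weighted Littlewood--Paley inequality in the variable $x_2$, with weight $w(x_1,\cdot)$, to the $\ell^2(\calD^n)$-valued function $x_2 \mapsto \{\Delta_I^1 f(x_1, x_2)\}_{I \in \calD^n}$, and then integrating in $x_1$, gives the claim.

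The only step requiring any attention is the vector-valued weighted Littlewood--Paley estimate with constant depending only on the one-parameter $A_p$ constant; this is standard and follows from the scalar estimate by Rubio de Francia extrapolation, or can be cited directly from the bi-parameter literature such as \cite{HPW}. No genuinely new difficulty arises, which is consistent with the lemma being recorded here as a known fact.
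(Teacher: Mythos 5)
Your argument is correct. The paper records this lemma as a standard fact and gives no proof at all, so there is no in-paper argument to compare against; your proof --- slicing via \eqref{eq:prodap} to reduce to the one-parameter weighted dyadic Littlewood--Paley estimate with uniform constants, and then upgrading to the $\ell^2(\calD^n)$-valued version (by Rubio de Francia extrapolation of pairs, or by citing the weighted square-function literature) to pass from $\Delta_I^1$ to $\Delta_{I\times J}=\Delta_J^2\Delta_I^1$ --- is precisely the standard route by which this lemma is usually established.
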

Martingale blocks are defined in the natural way
$$
\Delta_{K \times V}^{i, j} f  =  \sum_{I\colon I^{(i)} = K} \sum_{J\colon J^{(j)} = V} \Delta_{I \times J} f = \Delta_{K,i}^1( \Delta_{V,j}^2 f) = \Delta_{V,j}^2 ( \Delta_{K,i}^1 f).
$$

\subsection{Maximal functions}
Given dyadic grids $\calD^n$ and $\calD^m$ we denote the dyadic maximal functions
by
$$
M_{\calD^n}f(x) := \sup_{I \in \calD^n} \frac{1_I(x)}{|I|}\int_I |f(y)| \ud y
$$
and
$$
M_{\calD^n, \calD^m} f(x_1, x_2) := \sup_{R \in \calD^n \times \calD^m}  \frac{1_R(x_1, x_2)}{|R|}\iint_R |f(y_1, y_2)|\ud y_1 \ud y_2.
$$
The latter is also called the strong maximal function. The non-dyadic variants are simply denoted by $M$, as it is clear what is meant from the context.
The following definitions are in line with our usual notational conventions.
If $f \colon \R^{n+m} \to \C$ we set $M^1_{\calD^n} f(x_1, x_2)
=  M_{\calD^n}(f(\cdot, x_2))(x_1)$. The operator $M^2_{\calD^m}$ is defined similarly.
For various maximal functions $M$ we define $M_s$ by setting $M_s f = (M |f|^s)^{1/s}$.

\subsection{BMO spaces}\label{ss:bmo}

We say that $b \in L^1_{\loc}(\R^n)$ belongs to the dyadic BMO space $\BMO_{\calD^n}(\R^n) = \BMO_{\calD^n}$ if
$$
\|b\|_{\BMO_{\calD^n}} := \sup_{I \in \calD^n} \frac{1}{|I|} \int_I |b - \langle b \rangle_I| < \infty.
$$
The ordinary space $\BMO(\R^n)$ is defined by taking the supremum over all cubes.

\subsubsection*{Product BMO}
Here we define the (dyadic) bi-parameter
product BMO space $\BMO_{\textup{prod}}^{\calD^n, \calD^m}(\R^n \times \R^m) = \BMO_{\textup{prod}}^{\calD^n, \calD^m}$.
For a sequence $\lambda = (\lambda_{I,J})$ we set
\begin{equation*}
\|\lambda\|_{\BMO_{\textup{prod}}^{\calD^n, \calD^m}} := 
\sup_{\Omega} \Big( \frac{1}{|\Omega|} \mathop{\sum_{I \in \calD^n, J \in \calD^m}}_{I \times J \subset \Omega} |\lambda_{I,J}|^2 \Big)^{1/2},
\end{equation*}
where the supremum is taken over those sets $\Omega \subset \R^{n+m}$ such that $|\Omega| < \infty$ and such that for every $x \in \Omega$ there exist
$I \in \calD^n, J \in \calD^m$ so that $x \in I \times J \subset \Omega$. 

We say that $b \in L^1_{\loc}(\R^{n+m})$ belongs to the space $\BMO_{\textup{prod}}^{\calD^n, \calD^m}$ if
$$
\| b \|_{\BMO_{\textup{prod}}^{\calD^n, \calD^m}} := \| (\langle b, h_I \otimes h_J\rangle)_{I,J} \|_{\BMO_{\textup{prod}}^{\calD^n, \calD^m}} < \infty.
$$
The (non-dyadic) product BMO space $\BMO_{\textup{prod}}(\R^{n+m})$ can be defined via the norm defined by the supremum of
the above dyadic norms.

\subsubsection*{Little BMO}
We say that $b \in \bmo_{\calD^n, \calD^m}(\R^n \times \R^m) =  \bmo_{\calD^n, \calD^m}$ if
$$
\|b\|_{\bmo_{\calD^n, \calD^m}} := \sup_{\substack{I \in \calD^n \\ J \in \calD^m}} \frac{1}{|I||J|} \iint_{I \times J} |b - \langle b \rangle_{I \times J}| < \infty.
$$
The (non-dyadic) little BMO space $\bmo(\R^{n+m})$ is defined by taking the supremum over all rectangles. It is important that
$$
\|b\|_{\bmo(\R^{n+m})} \sim \max\big( \esssup_{x_1 \in \R^n} \, \|b(x_1, \cdot)\|_{\BMO(\R^m)}, \esssup_{x_2 \in \R^m}\, \|b(\cdot, x_2)\|_{\BMO(\R^n)} \big)
$$
and that we have the John--Nirenberg property
$$
\|b\|_{\bmo(\R^{n+m})} \sim \sup_{R \subset \R^{n+m}} \Big( \frac{1}{|R|} \int_{R} |b - \langle b \rangle_{R}|^p \Big)^{1/p}, \qquad 1 < p < \infty.
$$
Moreover, we need to know that $\bmo(\R^{n+m}) \subset \BMO_{\textup{prod}}(\R^{n+m})$. The reader can consult e.g. \cite{HPW, OPS}.

\subsubsection*{Adapted maximal functions}
For $b \in \BMO(\R^n)$ and $f \colon \R^n \to \C$ define
$$
M_bf = \sup_I \frac{1_I}{|I|} \int_I |b-\langle b \rangle_I| |f|.
$$
In the situation $b \in \bmo(\R^n \times \R^m)$ and $f \colon \R^{n+m} \to \C$ we similarly define
$$
M_b f = \sup_{I,J} \frac{1_{I \times J}}{|I||J|} \iint_{I \times J} |b-\langle b \rangle_{I \times J}| |f|.
$$
Here the supremums are taken over all intervals $I \subset \R^n$ and $J \subset \R^m$. The dyadic variants could also be defined, and denoted by
$M_{\calD^n, b}$ and $M_{\calD^n, \calD^m, b}$.

For a little bmo function $b \in \bmo(\R^n \times \R^m)$ define 
$$
\varphi_{\calD^m, b}(f) = \sum_{J \in \calD^m} M_{\langle b \rangle_{J,2}} \langle f, h_J \rangle_2 \otimes h_J,
$$
and similarly define $\varphi_{\calD^n, b}(f)$. For our later usage it is important to not to use the dyadic
variant $M_{\calD^n, \langle b \rangle_{J,2}}$, as it would induce an unwanted dependence on $\calD^n$ (which has relevance
in some randomisation considerations).
\begin{lem}\label{lem:bmaxbounds}
Suppose $\|b_i\|_{\BMO(\R^n)} \le 1$, $1 < u, p < \infty$ and $w \in A_p(\R^n)$. Then we have
\begin{equation}\label{eq:vMb}
\Big\| \Big( \sum_i [M_{b_i} f_i]^u \Big)^{1/u} \Big\|_{L^p(w)} \lesssim C([w]_{A_p(\R^n)}) \Big\| \Big( \sum_i |f_i|^u \Big)^{1/u} \Big\|_{L^p(w)}.
\end{equation}
The same bound holds with $\|b_i\|_{\bmo(\R^n \times \R^m)} \le 1$ and $w \in A_p(\R^n \times \R^m)$.
For $b$ with $\|b\|_{\bmo(\R^n \times \R^m)} \le 1$ we also have
$$
\|\varphi_{\calD^m, b}(f)\|_{L^p(w)} \le C([w]_{A_p(\R^n \times \R^m)})\|f\|_{L^p(w)}, \qquad 1 < p < \infty,\, w \in A_p(\R^n \times \R^m).
$$
\end{lem}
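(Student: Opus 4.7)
The plan is to reduce all three inequalities to the classical weighted vector-valued Fefferman--Stein inequality for the Hardy--Littlewood (respectively strong) maximal function, using a John--Nirenberg pointwise dominant for $M_b$.

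First, for any $b \in \BMO(\R^n)$ and any $s > 1$, Hölder's inequality combined with the $L^{s'}$ John--Nirenberg estimate yields the pointwise bound
\begin{equation*}
M_b f(x) \lesssim \|b\|_{\BMO(\R^n)}\, M_s f(x).
\end{equation*}
The bi-parameter analogue $M_b f \lesssim \|b\|_{\bmo(\R^n \times \R^m)}\, (M_{\calD^n,\calD^m})_s f$ follows identically from the little BMO John--Nirenberg property already recorded in Section 2. To prove \eqref{eq:vMb}, by openness of $A_p$ choose $s \in (1, u)$ close enough to $1$ that $w \in A_{p/s}$. The pointwise estimate gives
\begin{equation*}
\Big(\sum_i [M_{b_i} f_i]^u\Big)^{1/u} \lesssim \Big(\sum_i [M(|f_i|^s)]^{u/s}\Big)^{1/u}.
\end{equation*}
Now apply the weighted vector-valued Fefferman--Stein inequality with integration exponent $p/s > 1$, vector exponent $u/s > 1$, weight $w \in A_{p/s}$, and input functions $g_i := |f_i|^s$; raising to the $1/s$ power gives the claim. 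The bi-parameter version is proved identically, using the weighted vector-valued strong maximal inequality for product $A_p$ weights, which is standard and follows from \eqref{eq:prodap} by iterating the one-parameter result fiberwise.

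For $\varphi_{\calD^m,b}$, I would invoke Lemma \ref{lem:mest} in the $\calD^m$ direction to pass to a square function. By orthogonality of the cancellative Haar functions, $\Delta_J^2 \varphi_{\calD^m,b}(f) = M_{\langle b\rangle_{J,2}}(\langle f,h_J\rangle_2) \otimes h_J$, so using $|h_J|^2 = |J|^{-1}1_J$,
\begin{equation*}
\|\varphi_{\calD^m,b}(f)\|_{L^p(w)} \sim \Big\| \Big( \sum_J |M_{\langle b\rangle_{J,2}}(\langle f,h_J\rangle_2)|^2 \otimes |h_J|^2 \Big)^{1/2} \Big\|_{L^p(w)},
\end{equation*}
while the same identity applied to $f$ itself gives the target $\|f\|_{L^p(w)}$ on the right. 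Each $\langle b\rangle_{J,2}$ is, as a function of $x_1$, in $\BMO(\R^n)$ with norm at most $\|b\|_{\bmo(\R^n\times\R^m)} \le 1$, uniformly in $J \in \calD^m$. By \eqref{eq:prodap}, for a.e.\ $x_2$ the weight $w(\cdot,x_2)$ lies in $A_p(\R^n)$ with constant uniformly bounded by $[w]_{A_p(\R^n\times\R^m)}$. Applying the already proven one-parameter estimate \eqref{eq:vMb} fiberwise in $x_1$ to the vectors $g_J(x_1) := \langle f,h_J\rangle_2(x_1)\cdot |J|^{-1/2}1_J(x_2)$ and integrating in $x_2$ concludes.

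The only non-trivial input is the quantitative dependence $C([w]_{A_p})$ in the weighted Fefferman--Stein inequality together with the openness of $A_p$; both are classical. The rest is accounting for exponents, and the only place requiring care is the fiberwise step for $\varphi_{\calD^m,b}$, where uniformity of the $A_p$ constants in the frozen variable is essential to keep the final constant a function of $[w]_{A_p(\R^n\times\R^m)}$ only.
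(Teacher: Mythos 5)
Your proof is correct and follows essentially the same route as the paper: the pointwise bound $M_b f \lesssim M_s f$ via H\"older and John--Nirenberg, an exponent $s$ chosen quantitatively through the reverse H\"older/openness property of $A_p$, a weighted vector-valued maximal inequality, and, for $\varphi_{\calD^m,b}$, the square-function reduction via Lemma \ref{lem:mest} combined with the uniform bound $\|\langle b\rangle_{J,2}\|_{\BMO(\R^n)} \lesssim 1$. The only cosmetic difference is how the scalar weighted bound is upgraded to \eqref{eq:vMb}: the paper invokes extrapolation, while you quote the weighted vector-valued Fefferman--Stein inequality directly and argue fiberwise (using \eqref{eq:prodap}) in the bi-parameter and mixed situations; both are standard and yield the same quantitative conclusion.
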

\begin{proof}
We begin by proving the bound $\|M_b f\|_{L^p(w)} \le C([w]_{A_p(\R^n \times \R^m)})\|f\|_{L^p(w)}$ -- the proof is the same in the one-parameter case.
Fix $w \in A_p(\R^n \times \R^m)$ and choose $s = s([w]_{A_p(\R^n \times \R^m)}) \in (1,p)$ so that $[w]_{A^{p/s}(\R^n \times \R^m)} \le C([w]_{A_p(\R^n \times \R^m)})$.
This can be done using the reverse H\"older inequality -- the well-known bi-parameter version is stated and proved e.g. in Proposition 2.2. of \cite{HPW}.
Using H\"older's inequality and the John--Nirenberg for little bmo we get that
$$
M_b f \le C(s) M_s f = C([w]_{A_p(\R^n \times \R^m)})M_s f.
$$
Now, using that $M \colon L^q(v) \to L^q(v)$ for all $q \in (1,\infty)$ and $v \in A_q(\R^n \times \R^m)$ we have
$$
\|M_b f\|_{L^p(w)} \le C([w]_{A_p(\R^n \times \R^m)})\| M |f|^s \|_{L^{p/s}(w)}^{1/s} \le C([w]_{A_p(\R^n \times \R^m)}) \|f\|_{L^p(w)}.
$$
The bi-parameter version of \eqref{eq:vMb} (and \eqref{eq:vMb} itself) now follow immediately by extrapolation.

Next, using Lemma \ref{lem:mest}, the estimate \eqref{eq:vMb} and the fact that $\| \langle b \rangle_{J,2} \|_{\BMO(\R^n)} \lesssim 1$
 we get
\begin{equation*}
\begin{split}
\|\varphi_{\calD^m, b}(f)\|_{L^p(w)}
&\le C([w]_{A_p(\R^n \times \R^m)}) \Big\| \Big(\sum_{J} \big(M_{\langle b \rangle_{J,2}} \langle f, h_{J}\rangle_2 \big)^2 \otimes \frac{1_{J}}{|J|} \Big)^{1/2} 
\Big\|_{L^p(w)} \\
& \le C([w]_{A_p(\R^n \times \R^m)}) \Big\| \Big(\sum_{J} \big|\langle f, h_{J}\rangle_2 \big|^2 \otimes \frac{1_{J}}{|J|} \Big)^{1/2} 
\Big\|_{L^p(w)} \\
&\le C([w]_{A_p(\R^n \times \R^m)}) \| f \|_{L^p(w)}.
\end{split}
\end{equation*}

\end{proof}

\subsection{Commutators}
We set 
$$
[b,T]_1(f_1,f_2) = bT(f_1, f_2) - T(bf_1, f_2) \, \textup{ and } \, [b,T]_2(f_1, f_2) = bT(f_1,f_2) - T(f_1, bf_2). 
$$
These are understood generally in a situation, where we e.g. already know that $T \colon L^3(\R^{n+m}) \times L^3(\R^{n+m}) \to L^{3/2}(\R^{n+m})$, and
$b$ is locally in $L^3$. Then we initially study the case that $f_1$ and $f_2$ are, say, bounded and compactly supported, so that
e.g. $bf_2 \in L^3(\R^{n+m})$ and $bT(f_1,f_2) \in L^1_{\loc}(\R^{n+m})$.

\section{Model operators: Shifts, partial paraproducts and full paraproducts}\label{sec:defbilinbiparmodel}
We define the model operators that appear
in the bilinear bi-parameter representation theorem \cite{LMV}.
In this section all the objects are defined using some fixed dyadic grids $\calD^n$ and $\calD^m$.
Let $f_1, f_2 \colon \R^{n+m} \to \C$ be two given functions. 

\subsection{Bilinear bi-parameter shifts}\label{ss:bilinbiparshiftScalar}
For triples of positive integers $k = (k_1, k_2, k_3)$, $k_1, k_2, k_3 \ge 0$, and $v = (v_1, v_2, v_3)$, $v_1, v_2, v_3 \ge 0$,
and cubes $K \in \calD^n$ and $V \in \calD^m$, define
\begin{align*}
A_{K, k}^{V, v}(f_1,f_2) = \sum_{\substack{I_1, I_2, I_3 \in \calD^n \\ I_1^{(k_1)} = I_2^{(k_2)} = I_3^{(k_3)} = K}} 
&\sum_{\substack{J_1, J_2, J_3 \in \calD^m \\ J_1^{(v_1)} = J_2^{(v_2)} = J_3^{(v_3)} = V}} a_{K, V, (I_i), (J_j)} \\
&\times \langle f_1, h_{I_1} \otimes h_{J_1}\rangle \langle f_2,  h_{I_2} \otimes h_{J_2}\rangle  h_{I_3}^0 \otimes h_{J_3}^0.
\end{align*}
We also demand that the scalars $a_{K, V, (I_i), (J_j)}$ satisfy the estimate
$$
|a_{K, V, (I_i), (J_j)}| \le \frac{|I_1|^{1/2} |I_2|^{1/2}|I_3|^{1/2}}{|K|^2} \frac{|J_1|^{1/2} |J_2|^{1/2}|J_3|^{1/2}}{|V|^2}. 
$$ 
A shift of complexity $(k,v)$ of a particular form (the non-cancellative Haar functions are in certain positions) is
$$
S_{k}^{v}(f_1,f_2) = \sum_{K \in \calD^n} \sum_{V \in \calD^m} A_{K, k}^{V, v}(f_1,f_2).
$$
An operator of the above form, but having the non-cancellative Haar functions $h_I^0$ and $h_J^0$ in some of
the other slots, is also a shift. So there are shifts of nine different types, and we could e.g. also have for all $K, V$ that
\begin{align*}
A_{K, k}^{V, v}(f_1,f_2) = \sum_{\substack{I_1, I_2, I_3 \in \calD^n \\ I_1^{(k_1)} = I_2^{(k_2)} = I_3^{(k_3)} = K}} 
&\sum_{\substack{J_1, J_2, J_3 \in \calD^m \\ J_1^{(v_1)} = J_2^{(v_2)} = J_3^{(v_3)} = V}} a_{K, V, (I_i), (J_j)} \\
&\times \langle f_1, h_{I_1}^0 \otimes h_{J_1}\rangle \langle f_2,  h_{I_2} \otimes h_{J_2}\rangle  h_{I_3} \otimes h_{J_3}^0.
\end{align*}

\subsection{Bilinear paraproducts}
Let $b \colon \R^m \to \C$ be a function and define
\begin{equation*}
A_b(g_1, g_2) := \sum_{V} \langle b, h_V\rangle \langle g_1 \rangle_V \langle g_2 \rangle_V h_V,
\end{equation*}
where $g_i \colon \R^m \to \C$. An operator $\pi_b$ is called a dyadic bilinear paraproduct in $\R^m$ if it is
of the form $A_b$, $A_b^{1*}$ or $A_b^{2*}$. We often write $\pi_{\calD^m ,b}$ to emphasise the dyadic grid using which
it is defined.

\subsection{Bilinear bi-parameter partial paraproducts}
Let $k = (k_1, k_2, k_3)$, $k_1, k_2, k_3 \ge 0$. For each $K, I_1, I_2, I_3 \in \calD^n$ we are given a function $b_{K, I_1, I_2, I_3} \colon \R^m \to \C$ such that
$$
\| b_{K, I_1, I_2, I_3} \|_{\BMO(\R^m)} \le \frac{|I_1|^{1/2} |I_2|^{1/2}|I_3|^{1/2}}{|K|^2}.
$$
A partial paraproduct of complexity $k$ of a particular form is
$$
P_k(f_1, f_2) = \sum_{K \in \calD^n} \sum_{\substack{I_1, I_2, I_3 \in \calD^n \\ I_1^{(k_1)} = I_2^{(k_2)} = I_3^{(k_3)} = K}} h_{I_3}^0 \otimes
\pi_{b_{K, I_1, I_2, I_3}}(\langle f_1, h_{I_1} \rangle_1, \langle f_2, h_{I_2} \rangle_1),
$$ 
where $\pi_{b_{K, I_1, I_2, I_3}} $denotes a bilinear paraproduct in $\R^m$, and is of the same form for all $K,I_1, I_2, I_3$.
Again, an operator of the above form, but having the non-cancellative Haar function $h_I^0$ in some other slot, is also a partial paraproduct. 
Therefore, we have nine different possibilities again (the bilinear paraproducts can be of one of the three different types, and the non-cancellative Haar function in $\R^n$ can appear in one of the three slots).

Of course, we also have partial paraproducts with shift structure in $\R^m$ and paraproducts in $\R^n$.

\subsection{Bilinear bi-parameter full paraproducts}
Given a function $b \colon \R^{n+m} \to \C$ with $\|b\|_{\BMO_{\textup{prod}}(\R^{n+m})} = 1$ a full paraproduct $\Pi_b$ of a particular form is
$$
\Pi_b(f_1, f_2) = \sum_{\substack{K \in \calD^n \\ V \in \calD^m}} \lambda_{K,V}^b \langle f_1 \rangle_{K \times V} \langle f_2 \rangle_{K \times V} h_K \otimes h_V,
$$
where the function $b$ determines the coefficients $\lambda_{K,V}^b$ via the formula $\lambda_{K,V}^b := \langle b, h_K \times h_V \rangle$.
Again, an operator of the above form, but having the cancellative Haar functions $h_K$ or $h_V$ in some other slots, is also a full paraproduct. There are nine
different cases as the Haar functions present in the coefficients $\lambda_{K,V}^b$ are not allowed to move, i.e. we always have $\lambda_{K,V}^b := \langle b, h_K \times h_V \rangle$. For example, $\Pi_b$ could also be of the form
$$
\Pi_b(f_1, f_2) = \sum_{\substack{K \in \calD^n \\ V \in \calD^m}} \lambda_{K,V}^b \langle f_1 \rangle_{K \times V} \Big\langle f_2, \frac{1_K}{|K|} \otimes h_V \Big\rangle h_K \otimes \frac{1_V}{|V|}.
$$
\begin{rem}
We warn the reader that later we will have \emph{linear} bi-parameter paraproducts (the operators $A_i(b, \cdot)$, $i = 5,6,7,8$, in Section \ref{sec:marprod})
so that even the coefficients $\lambda_{K,V}^b$ can have $\frac{1_K}{|K|} \otimes h_V$ or $h_K \otimes \frac{1_V}{|V|}$. The role of such paraproducts is the following: they
appear in some decompositions of $bf$ related to commutators, but they \emph{do not} appear in the linear bi-parameter representation theorem \cite{Ma1}.
In fact, their boundedness also requires more: $b$ has to be in $\bmo(\R^n \times \R^m)$.
In this section we are only introducing operators that appear in the bilinear bi-parameter representation theorem \cite{LMV}, so philosophies
of such nature do not concern us here. 
\end{rem}
\subsection{Boundedness properties of the model operators}
In \cite{LMV} we showed that all the model operators are bounded in the full range $L^p(\R^{n+m}) \times L^q(\R^{n+m}) \to L^r(\R^{n+m})$,
$p,q \in (1,\infty]$, $r \in (1/2, \infty)$ and $1/p + 1/q = 1/r$. In fact, we even showed various weighted estimates and mixed-norm estimates.

\section{Bilinear bi-parameter singular integrals and commutators}
A bilinear bi-parameter singular integral $T$ has a relatively long definition. A model of a bilinear bi-parameter CZO in $\R^n \times \R^m$ is
$$
(T_1 \otimes T_2)(f_1 \otimes f_2, g_1 \otimes g_2)(x) := T_1(f_1, g_1)(x_1)T_2(f_2, g_2)(x_2),
$$
where $f_1, g_1 \colon \R^n \to \C$, $f_2, g_2 \colon \R^m \to \C$,
$x = (x_1, x_2) \in \R^{n+m}$, $T_1$ is a bilinear CZO in $\R^n$ and $T_2$ is a bilinear CZO in $\R^m$. 
A model of a bilinear CZO $T$ in $\R^n$ is
$$
T(f_1, f_2)(x) := \tilde T(f_1 \otimes f_2)(x,x), \qquad x \in \R^n,
$$
where $\tilde T$ is a usual linear CZO in $\R^{2n}$. For the general definition of a bilinear singular integral see e.g. \cite{GT}.
For the general definition of bilinear bi-parameter singular integrals
we refer to Section 3 of \cite{LMV}. In \cite{LMV} we proved that under certain natural T1 type conditions we can represent
$\langle T(f_1, f_2), f_3 \rangle$ using the model operators from Section \ref{sec:defbilinbiparmodel} (shifts, partial paraproducts and full paraproducts). For the definition of the
$T1$ type conditions (their exact nature is not needed in this paper) we again refer to Section 3 of \cite{LMV}.

We now state the bilinear bi-parameter representation theorem from Section 5 of \cite{LMV}. For this we need the following notation regarding random dyadic grids.
Let $\mathcal{D}_0^n$ and $\mathcal{D}_0^m$ denote the standard dyadic grids on $\R^n$ and $\R^m$ respectively.
For $\omega = (\omega_i) \in (\{0,1\}^n)^{\Z}$, $\omega' = ( \omega'_i) \in(\{0,1\}^m)^{\Z}$, $I \in \calD^n_0$ and $J \in \calD^m_0$ denote
$$
I + \omega := I + \sum_{i:\, 2^{-i} < \ell(I)} 2^{-i}\omega_i \qquad \textup{and} \qquad J + \omega' := J + \sum_{i:\, 2^{-i} < \ell(J)} 2^{-i}\omega'_i.
$$
Then we define the random lattices
$$\calD^n_{\omega} = \{I + \omega\colon I \in \calD^n_0\} \qquad \textup{and} \qquad
\calD^m_{\omega'} = \{J + \omega'\colon J \in \calD^m_0\}.
$$
In what follows always $\omega \in (\{0,1\}^n)^{\Z}$ and  $\omega' \in(\{0,1\}^m)^{\Z}$.
There is a natural probability product measure $\mathbb{P}_{\omega}$ in $(\{0,1\}^n)^{\Z}$ and $\mathbb{P}_{\omega'}$ in $(\{0,1\}^m)^{\Z}$.
We denote the expectation over these probability spaces by $\E_{\omega, \omega'} = \E_{\omega} \E_{\omega'} = \iint \ud \mathbb{P}_{\omega} \ud \mathbb{P}_{\omega'}$.

We sometimes can also write $\calD_0 = \calD^n_0 \times \calD^m_0$ and $\calD_{\omega, \omega'} = \calD^n_{\omega} \times \calD^m_{\omega'}$.
\begin{thm}\label{thm:rep}
Suppose $T$ is a bilinear bi-parameter Calder\'on--Zygmund operator satisfying all the structural assumptions and all the
boundedness and cancellation assumptions as formulated in Section 3 of \cite{LMV}. Then
$$
\langle T(f_1,f_2), f_3\rangle = C_T \mathbb{E}_{\omega, \omega'}\mathop{\sum_{k = (k_1, k_2, k_3) \in \Z_+^3}}_{v = (v_1, v_2, v_3) \in \Z_+^3} \alpha_{k, v} 
\sum_{u}
\bla U^{v}_{k, u, \mathcal{D}^n_{\omega},\mathcal{D}^m_{\omega'}}(f_1, f_2), f_3 \bra,
$$
where $C_T \lesssim 1$, $\alpha_{k, v} = 2^{- \alpha \max k_i/2} 2^{- \alpha \max v_j/2}$, the summation over $u$ is finite, and
$U^{v}_{k, u, \mathcal{D}^n_{\omega},\mathcal{D}^m_{\omega'}}$ is always either a shift of complexity $(k,v)$,
a partial paraproduct of complexity $k$ or $v$ (this requires $k= 0$ or $v=0$) or a full paraproduct (this requires $k=v=0$).
We can e.g. understand that here $f_1, f_2, f_3 \in L^3(\R^{n+m})$.
\end{thm}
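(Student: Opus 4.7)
The plan is to reproduce the strategy of \cite{LMV}, which combines Hyt\"onen's random dyadic expansion, the bi-parameter good/bad decomposition of \cite{Ma1}, and the bilinear $T1$ argument of Grafakos--Torres \cite{GT}. First I would fix independent random dyadic grids $\calD^n_\omega$ and $\calD^m_{\omega'}$, expand each $f_i$ in Haar series, and write the trilinear form as a six-fold sum
$$
\bla T(f_1,f_2), f_3\bra = \sum_{I_1,I_2,I_3 \in \calD^n_\omega} \sum_{J_1,J_2,J_3 \in \calD^m_{\omega'}} \bla T(h_{I_1}\otimes h_{J_1}, h_{I_2}\otimes h_{J_2}), h_{I_3}\otimes h_{J_3}\bra
$$
(modulo the pairings $\langle f_i, h_{I_i}\otimes h_{J_i}\rangle$). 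In each parameter separately I would split the summation according to the standard good/bad dichotomy, and within the good part according to which of the three cubes is the smallest. Taking $\E_{\omega,\omega'}$ kills the bad tuples by the symmetry of the random shifts.

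Next, for a good tuple in $\R^n$ the smallest cube determines a dyadic ancestor $K \in \calD^n_\omega$ of comparable sidelength and complexities $k_j \ge 0$ with $I_j^{(k_j)} = K$; the same construction in $\R^m$ produces $V$ and $v$. For each fixed $(k,v)$ the structural kernel estimates from \cite[Section 3]{LMV}, combined with the standard H\"older argument on a good tuple, yield a gain $2^{-\alpha \max k_i/2}\cdot 2^{-\alpha \max v_j/2}$ together with precisely the normalisation
$$
|a_{K,V,(I_i),(J_j)}| \le \frac{|I_1|^{1/2}|I_2|^{1/2}|I_3|^{1/2}}{|K|^2} \cdot \frac{|J_1|^{1/2}|J_2|^{1/2}|J_3|^{1/2}}{|V|^2}
$$
needed to recognise the contribution as a bilinear bi-parameter shift $S_k^v$. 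Summing over $K, V$ and the finitely many slot positions for the non-cancellative Haar functions accounts for the shift part of the representation.

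The degenerate pieces, where some complexity vanishes, are where the paraproducts enter. When two of the three cubes in a parameter coincide the relevant Haar function is non-cancellative and the pure kernel bound no longer produces the required decay; instead one uses the $T1$/cancellation assumptions in that parameter to rewrite the diagonal sums in terms of $\BMO(\R^m)$-valued (resp.\ $\BMO(\R^n)$-valued) symbols, yielding partial paraproducts $P_k$ or $P_v$ when exactly one of $k,v$ equals zero, and in terms of the product BMO symbol $T(1,1) \in \BMO_{\textup{prod}}(\R^{n+m})$ when $k=v=0$, yielding full paraproducts $\Pi_b$. The main obstacle is the bookkeeping: with three Haar slots per parameter there are nine possible positions for each non-cancellative Haar function, and in the mixed degenerate case one has to match each collapsing slot with the appropriate ``partial'' $T1$ hypothesis to generate the right type of paraproduct. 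Once this bookkeeping is done the outer expectation $\E_{\omega,\omega'}$ only affects the overall constant $C_T$, and the stated dyadic representation follows.
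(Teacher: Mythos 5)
You should first note that this paper does not actually prove Theorem \ref{thm:rep}: it is imported verbatim from Section 5 of \cite{LMV}, and everything in the present paper takes it as a black box. So the only fair comparison is with the proof in \cite{LMV}, and your outline does follow the same general strategy used there (and in \cite{Ma1}): random dyadic grids, Haar expansion of the trilinear form, reduction to good cubes, a case analysis by relative size/position producing shifts with the stated normalisation and decay, and paraproducts from the degenerate cases via the $T1$ assumptions.

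As a proof, however, the outline has concrete gaps. First, the reduction to good cubes is misdescribed: taking $\E_{\omega,\omega'}$ does not ``kill the bad tuples by symmetry.'' The standard mechanism is that the probability of a cube being good is a fixed constant independent of its position, which yields an exact identity with a factor $1/\pi_{\text{good}}$ (or, in older variants, an absorbable error estimate); moreover, in the bilinear bi-parameter setting one cannot assume all six cubes are good simultaneously, and arranging which cubes carry goodness is a genuine issue in \cite{LMV}, not a formality. Second, the paraproduct cases are not ``when two of the three cubes coincide'' and are not mere bookkeeping: partial and full paraproducts arise from the nested (deeply inside) regimes after adding and subtracting averages, and the substantive work is to show that the resulting coefficient families $b_{K,I_1,I_2,I_3}$ satisfy the uniform $\BMO(\R^m)$ normalisation of Section \ref{sec:defbilinbiparmodel} (respectively that the full paraproduct symbol lies in $\BMO_{\textup{prod}}$), which is exactly where the diagonal BMO/$T1$-type hypotheses of Section 3 of \cite{LMV} are consumed. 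Third, the claimed gain $2^{-\alpha\max k_i/2}2^{-\alpha\max v_j/2}$ together with the shift normalisation requires carrying out the separated/nested/adjacent kernel estimates in both parameters simultaneously (mixed size--H\"older bounds), which your sketch asserts but does not perform. None of this contradicts your plan, but as written it is an outline of \cite{LMV} rather than a proof, and the one mechanism you do specify (expectation annihilating bad parts) is not the correct one.
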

We recall that in \cite{LMV} we in particular showed that every bilinear bi-parameter singular integral $T$
satisfying the assumptions of the above representation theorem maps in the full range $L^p(\R^{n+m}) \times L^q(\R^{n+m}) \to L^r(\R^{n+m})$,
$p,q \in (1,\infty]$, $r \in (1/2, \infty)$ and $1/p + 1/q = 1/r$. In fact, we showed much more general bounds -- see \cite{LMV}.

We can now formulate our theorem about the Banach range boundedness of $[b,T]_1$, where $T$ is a bilinear bi-parameter singular integral
satisfying the assumptions of the above representation theorem and $\|b\|_{\bmo(\R^{n+m})}  = 1$.
\begin{thm}\label{thm:main1}
Suppose $T$ is a bilinear bi-parameter singular integral satisfying the assumptions of Theorem \ref{thm:rep} and $\|b\|_{\bmo(\R^{n+m})}  = 1$.
Let $p,q,r \in (1,\infty)$ with $1/p + 1/q = 1/r$. Then we have
$$
\|[b, T]_1(f_1, f_2)\|_{L^r(\R^{n+m})} \lesssim \|f_1\|_{L^p(\R^{n+m})} \|f_2\|_{L^q(\R^{n+m})}.
$$
\end{thm}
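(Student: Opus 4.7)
The plan is to combine the bilinear bi-parameter representation theorem \ref{thm:rep} with a case analysis on the Haar structure of each model operator. The key idea, following the guideline stated in the introduction, is to expand $bf_1$ (respectively $b$) in different martingale bases depending on whether the Haar functions appearing in a given pairing are cancellative.

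First I would apply Theorem \ref{thm:rep} to $T$ and dually test against $f_3 \in L^{r'}(\R^{n+m})$, which is legitimate since $r \in (1,\infty)$. This gives
\begin{equation*}
\langle [b, T]_1(f_1, f_2), f_3\rangle = C_T \E_{\omega, \omega'} \mathop{\sum_{k \in \Z_+^3}}_{v \in \Z_+^3} \alpha_{k,v} \sum_u \langle [b, U^v_{k, u, \calD_\omega^n, \calD_{\omega'}^m}]_1 (f_1, f_2), f_3\rangle.
\end{equation*}
Thanks to the decay $\alpha_{k,v} = 2^{-\alpha \max_i k_i /2} 2^{-\alpha \max_j v_j /2}$, it is enough to prove, for some fixed exponent $A$,
\begin{equation*}
\|[b, U]_1(f_1, f_2)\|_{L^r(\R^{n+m})} \lesssim (1 + \max_i k_i)^A (1 + \max_j v_j)^A \|f_1\|_{L^p(\R^{n+m})} \|f_2\|_{L^q(\R^{n+m})},
\end{equation*}
for each shift, partial paraproduct, or full paraproduct $U$ that can appear in the representation.

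For each such $U$, I would write $[b, U]_1(f_1, f_2) = bU(f_1,f_2) - U(bf_1, f_2)$ and expand both summands in martingale bases matched to the Haar structure of $U$. For a pairing $\langle bf_1, h_{I_1}^{\eta_1} \otimes h_{J_1}^{\epsilon_1}\rangle$ appearing in $U(bf_1,f_2)$, the rule is: if both $\eta_1$ and $\epsilon_1$ are nonzero, expand $bf_1$ bi-parametrically; if exactly one vanishes, expand only in the cancellative direction; if both vanish, do not expand. Simultaneously I would expand $b$ in $bU(f_1,f_2)$ in the corresponding fashion, dictated by the Haar functions sitting in $U$'s output slot. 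The cancellation between the two expansions produces a finite collection of pieces of three kinds: modified versions of $U$ in which martingale differences of $b$ are absorbed into the coefficients (producing shifts or paraproducts of the same complexity, up to a factor growing polynomially in $\max_i k_i, \max_j v_j$); compositions of $U$ (or a variant) with a linear bi-parameter paraproduct acting on $b$; and terms where $b$ appears pre-averaged and can be controlled by the adapted maximal functions of Lemma \ref{lem:bmaxbounds}.

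These pieces can then be estimated using the bi-parameter square function characterization of $L^p$ from Lemma \ref{lem:mest}, the adapted maximal function bounds of Lemma \ref{lem:bmaxbounds}, the $L^p \times L^q \to L^r$ boundedness of the bilinear model operators established in \cite{LMV}, and the $L^p$-boundedness of linear bi-parameter paraproducts on $b \in \bmo(\R^{n+m})$. Being in the Banach range $r > 1$ grants us duality, weighted bounds and extrapolation simultaneously, so these tools apply uniformly across the configurations; the resulting constants grow at most polynomially in $\max_i k_i$ and $\max_j v_j$, which is absorbed by $\alpha_{k,v}$. The main obstacle is the combinatorial bookkeeping: shifts and partial paraproducts each come in nine configurations depending on the placement of the $h^0$ factors, and these interact with the three expansion rules above to generate many sub-cases. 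The crucial observation is that, in the Banach range, all of them reduce to the three templates listed above, so once the correct expansion of $bf_1$ is identified the estimation is routine; the quasi-Banach range for shifts, which requires restricted weak type and is genuinely harder, falls outside the present theorem.
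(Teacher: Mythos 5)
Your proposal is correct and takes essentially the same route as the paper: Theorem \ref{thm:rep} reduces the claim to a Banach-range commutator bound for a general model operator (Theorem \ref{thm:com1ofmodelBanach}), which is proved precisely by your expansion protocol (bi-parameter, one-parameter, or no expansion of $bf$ according to the cancellativity of the Haar functions in the relevant slots), with the resulting pieces controlled by the paraproducts $A_i$ and $a_i^j$, the adapted maximal function bounds of Lemmas \ref{lem:bmaxbounds} and \ref{lem:maximalbound}, the little BMO estimate of Lemma \ref{lem:bmobound}, and the boundedness of the model operators, the linear-in-complexity loss being absorbed by $\alpha_{k,v}$. The only point worth flagging is that for the pieces where a factor such as $\langle b \rangle_{Q \times R} - \langle b \rangle_{I \times J}$ is absorbed into the coefficients one needs the absolute-value form of the model operator bound, i.e.\ \eqref{eq:bb}, which is what your appeal to the boundedness results of \cite{LMV} implicitly requires and which indeed holds there.
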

\begin{proof}
The claim follows from Theorem \ref{thm:rep} and from the Banach range commutator bounds of the model operators, Theorem \ref{thm:com1ofmodelBanach}.
\end{proof}

A bilinear bi-parameter singular integral $T$ is called free of paraproducts if
for all suitable functions $f_i \colon \R^n \to \C$, $g_i \colon \R^m \to \C$, $i=1,2$,
all 
$$
S \in \{T, T^{*1}, T^{*2}, T^{1*}_1, T^{2*}_1, T^{1*}_2, T^{2*}_2, T^{1*, 2*}_{1,2}, T^{1*, 2*}_{2,1}\}
$$
and all cubes $I \subset \R^n$, $J \subset \R^m$ there holds
$$
\langle S(1 \otimes g_1, 1 \otimes g_2), h_I \otimes h_J \rangle 
= \langle S(f_1 \otimes 1, f_2 \otimes 1), h_I \otimes h_J \rangle =0.
$$
For the definition of all the nine adjoints and partial adjoints of T see Section 2.8 of \cite{LMV}.
This definition guarantees that $T$ has a representation with shifts only.
In Section 8 of \cite{LMV} we proved that the bi-parameter multipliers $T_m$ of \cite{MPTT}
are bilinear bi-parameter singular integrals satisfying the assumptions of Theorem \ref{thm:rep} and
that they are free of paraproducts.
Our second main theorem involving quasi--Banach estimates for commutators of paraproduct free singular integrals is:
\begin{thm}\label{thm:main2}
Suppose $T$ is a bilinear bi-parameter singular integral satisfying the assumptions of Theorem \ref{thm:rep} and
that $T$ is free of paraproducts. Let $\|b\|_{\bmo(\R^{n+m})}  = 1$, and
let $1 < p, q \le \infty$ and $1/2 < r < \infty$ satisfy $1/p+1/q = 1/r$.
Then we have
$$
\|[b, T]_1(f_1, f_2)\|_{L^r(\R^{n+m})} \lesssim \|f_1\|_{L^p(\R^{n+m})} \|f_2\|_{L^q(\R^{n+m})}.
$$
\end{thm}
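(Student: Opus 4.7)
The plan is to combine the representation theorem (Theorem \ref{thm:rep}) with uniform commutator estimates for bilinear bi-parameter shifts, and then reach the quasi--Banach range via restricted weak type interpolation. Since $T$ is paraproduct free, Theorem \ref{thm:rep} expresses $\langle T(f_1,f_2), f_3\rangle$ as $C_T \E_{\omega,\omega'} \sum_{k,v} \alpha_{k,v} \sum_u \langle S^v_{k,u}(f_1,f_2), f_3\rangle$ involving only shifts, with decay $\alpha_{k,v} = 2^{-\alpha \max k_i /2} 2^{-\alpha \max v_j/2}$. It therefore suffices to prove, for each bilinear bi-parameter shift $S$ of complexity $(k,v)$, a bound for $[b,S]_1$ that grows at most polynomially in $(k,v)$ and is uniform in the dyadic grids. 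For $r \ge 1$, Minkowski's inequality lets one pass from the model operators back to $T$, but for $r < 1$ this fails. The device is to prove a restricted weak type $(p,q,r)$ bound for $[b,S]_1$ with polynomial control in $(k,v)$, to sum these and average over the grids \emph{inside} the restricted weak type inequality, and to recover the strong bound for $[b,T]_1$ by bilinear Marcinkiewicz interpolation at carefully chosen endpoint triples close to $(p,q,r)$. The endpoint cases where $p$ or $q$ equals $\infty$ fit naturally into this restricted weak type framework.

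For a fixed shift $S$, the commutator $[b,S]_1(f_1,f_2) = bS(f_1,f_2) - S(bf_1,f_2)$ is expanded adaptively to the Haar structure of $S$, following the guideline from the introduction. Each of the six Haar factors appearing in $S$ is either cancellative ($h^1$) or non-cancellative ($h^0$). In the pairing $\langle bf_1, h_{I_1}^{\eta} \otimes h_{J_1}^{\eta'}\rangle$ entering $S(bf_1, f_2)$, we expand $bf_1$ via bi-parameter martingale differences when $\eta, \eta' \ne 0$, via a one-parameter martingale in the cancellative direction when exactly one of $\eta, \eta'$ vanishes, and we do not expand at all when $\eta = \eta' = 0$. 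The same dichotomy is applied to $bS(f_1,f_2)$ when the Haar factors in the output slot of $S$ are inspected. Recognising martingale differences of $b$ against Haar functions as linear bi-parameter (or one-parameter) paraproducts $\pi_b$, collecting terms and cancelling the zeroth-order contributions that match between the two sides of $[b,S]_1$, one arrives at a finite decomposition of $[b,S]_1$ into operators of the form $\tilde S(\pi_b f_1, f_2)$ or $\pi_b \tilde S(f_1, f_2)$, where $\tilde S$ is a modified shift of the same complexity and $\pi_b$ is a linear paraproduct, together with pointwise remainders controlled by the adapted maximal functions $M_b$ and $\varphi_{\calD^n,b}, \varphi_{\calD^m,b}$ from Section \ref{ss:bmo}.

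The pieces are then estimated by combining three ingredients: the uniform $L^p \times L^q \to L^r$ shift bounds from \cite{LMV}, which grow only polynomially in complexity; the boundedness of the linear paraproducts $\pi_b$ under $b \in \bmo$, which is precisely our hypothesis; and the maximal function bounds of Lemma \ref{lem:bmaxbounds}. The main obstacle is translating these strong type bounds into restricted weak type bounds for each composite in the quasi--Banach range. Replacing $f_1, f_2$ by $\mathbf{1}_{E_1}, \mathbf{1}_{E_2}$, one removes from the dual test set $E_3$ an exceptional subset where the various operators $M_b\mathbf{1}_{E_i}$, strong maximal functions, and $\varphi_{\calD,b}\mathbf{1}_{E_i}$ exceed appropriate thresholds, and shows that every composite is controlled on the remaining good portion of $E_3$. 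The delicate point is that the outer factor in $\tilde S \circ \pi_b$ is a shift, which has no direct restricted weak type inequality; however the output of $\pi_b$ applied to $\mathbf{1}_{E_1}$ is pointwise majorised by the strong maximal function of $\mathbf{1}_{E_1}$, and it is exactly this good localisation of bi-parameter paraproducts that makes the restricted weak type argument for shift--paraproduct compositions go through. The decay factor $\alpha_{k,v}$ absorbs the polynomial losses, and summing and averaging inside the restricted weak type estimate yields the claimed bound for $[b,T]_1$ after bilinear interpolation.
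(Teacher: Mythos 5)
Your overall plan matches the paper's: use the representation theorem to reduce to averages of shift commutators, prove a restricted weak type estimate for those averages, and interpolate. But the specifics of the restricted weak type argument — which is the real content here — are under-specified and contain a substantive misstatement.

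First, the statement that ``the output of $\pi_b$ applied to $\mathbf{1}_{E_1}$ is pointwise majorised by the strong maximal function of $\mathbf{1}_{E_1}$'' is incorrect. Paraproducts (and more generally the operators $A_i(b,\cdot)$, $a_i^j(b,\cdot)$) are \emph{not} pointwise dominated by maximal functions; that would make them essentially trivial. What actually powers the argument is a \emph{localisation} property of an entirely different nature: the pairings $\langle a^1_{i,\omega}(b,f_3), h_{I_3} \otimes h_{J_3}^0\rangle$ and $\bla (\langle b \rangle_{I_3,1}-\langle b \rangle_{I_3 \times J_3})\langle f_3,h_{I_3}\rangle_1, h_{J_3}^0\bra$ depend only on $1_{I_3 \times J_3} f_3$, cf.\ \eqref{eq:localisation}. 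This allows one to replace $f_3$ by $1_{\wt\Omega_u}f_3$ in the term attached to the shell $\calR_u$. The bounds on the resulting objects then go through \emph{square} functions (the operators $S^{i,j}_U$, $S^1_{i,U}$, $S^2_{j,U}$ of Lemma \ref{lem:DetSquareShift}), not maximal functions of indicators. Moreover, in the paper the exceptional set $\Omega_u$ is built from $S^2_{v_1}f_1 \cdot S^{k_2,v_2}f_2$ with $f_1, f_2$ general $L^p$, $L^q$ functions — the ``mixed'' form of restricted weak type where only $f_3$ is restricted to lie under $1_{E'}$. Replacing $f_1, f_2$ by characteristic functions as you suggest is not what is needed, and the rest of your description ($M_b \mathbf{1}_{E_i}$, $\varphi_{\calD,b}\mathbf{1}_{E_i}$ exceeding thresholds) does not line up with a working argument.

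Second, on the order of operations: the quasi-triangle inequality $\|\sum g_i\|_{L^r}^r \le \sum \|g_i\|_{L^r}^r$ lets the $(k,v,u)$-sum be pulled outside the norm, so you do not need the restricted weak type bound to absorb the summation over complexities. What \emph{must} stay inside is the expectation $\E_{\omega,\omega'}$, because for $r<1$ the map $g\mapsto \|g\|_{L^r}$ is not convex and Jensen fails. Your phrasing — prove restricted weak type ``for $[b,S]_1$'' and then ``sum these and average over the grids inside'' — reads as if the estimate were obtained for a fixed grid first; the exceptional set would then depend on $(\omega,\omega')$ and the averaging would not go through. The point is precisely that the $\Omega_u$ are built from the deterministic (averaged) square functions, so that a single $E'$ works for the whole expectation.

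Third, the cases $p=\infty$ or $q=\infty$ do not ``fit naturally into this restricted weak type framework''; in the paper they are obtained afterwards by duality, using $[b, S_{\omega, \omega'}]_1^{1*} = -[b, S_{\omega, \omega'}^{1*}]_1$ together with the already established full range $p,q\in(1,\infty)$ bound.

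So: the strategic skeleton is right, but the restricted weak type argument as you describe it would not run — the claimed pointwise domination is false, the role of the square functions and the specific localisation are missing, and the handling of the grid average and of the endpoints is not correct as stated.
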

\begin{proof}
Using Theorem \ref{thm:rep} write the pointwise identity
$$
[b,T]_1(f_1, f_2) = C_T \mathop{\sum_{k = (k_1, k_2, k_3) \in \Z_+^3}}_{v = (v_1, v_2, v_3) \in \Z_+^3} \alpha_{k, v} 
\sum_{u} \mathbb{E}_{\omega, \omega'} [b,S^{v}_{k, u, \mathcal{D}^n_{\omega},\mathcal{D}^m_{\omega'}}]_1(f_1, f_2),
$$
where $S^{v}_{k, u, \mathcal{D}^n_{\omega},\mathcal{D}^m_{\omega'}}$ are bilinear bi-parameter shifts
of complexity $(k,v)$ defined using the dyadic grids $\mathcal{D}^n_{\omega}$ and $\mathcal{D}^m_{\omega'}$.
If $r \in (1/2,1]$ we get using $\|\sum_i g_i\|_{L^r(\R^{n+m})}^r \le \sum_i \| g_i  \|_{L^r(\R^{n+m})}^r$ that we have
$$
\| [b,T]_1(f_1, f_2) \|_{L^r(\R^{n+m})}^r \lesssim \mathop{\sum_{k = (k_1, k_2, k_3) \in \Z_+^3}}_{v = (v_1, v_2, v_3) \in \Z_+^3} \alpha_{k, v}^r
\sum_{u}  \|\mathbb{E}_{\omega, \omega'}[b,S^{v}_{k, u, \mathcal{D}^n_{\omega},\mathcal{D}^m_{\omega'}}]_1(f_1, f_2)\|_{L^r(\R^{n+m})}^r.
$$
If $r > 1$ simply use $\|\sum_i g_i\|_{L^r(\R^{n+m})} \le \sum_i \| g_i  \|_{L^r(\R^{n+m})}$ instead.
The claim then follows using Theorem \ref{thm:com1ofmodelQuasiBanach}, which says that \emph{averages} of commutators of shifts
map in the full range with a bound polynomial in complexity.
\end{proof}
The corresponding results for iterated commutators are recorded and proved in Section \ref{sec:iterated}.

\section{Martingale difference expansions of products}\label{sec:marprod}
The idea is that a product $bf$ paired with Haar functions is expanded in the bi-parameter fashion only if both of the Haar functions are cancellative. In a mixed
situation we expand only in $\R^n$ or $\R^m$, and in the remaining fully non-cancellative situation we do not expand at all -- and this protocol is key for us.
Also, our protocol entails the following: when pairing with a non-cancellative Haar function we add and subtract a suitable average of $b$.

Let $\calD^n$ and $\calD^m$ be some fixed dyadic grids in $\R^n$ and $\R^m$, respectively, and write $\calD= \calD^n \times \calD^m$.
In what follows we sum over $I \in \calD^n$ and $J \in \calD^m$.
\subsection{Paraproduct operators}
Let us first define certain paraproduct operators:
\begin{align*}
A_1(b,f) &= \sum_{I, J} \Delta_{I \times J} b \Delta_{I \times J} f, \\
A_2(b,f) &= \sum_{I, J} \Delta_{I \times J} b E_I^1\Delta_J^2 f, \\
A_3(b,f) &= \sum_{I, J} \Delta_{I \times J} b \Delta_I^1 E_J^2  f,\\
A_4(b,f) &= \sum_{I, J} \Delta_{I \times J} b \bla f \bra_{I \times J},
\end{align*}
and
\begin{align*}
A_5(b,f) &= \sum_{I, J} E_I^1 \Delta_J^2 b \Delta_{I \times J} f, \\
A_6(b,f) &= \sum_{I, J}  E_I^1 \Delta_J^2 b  \Delta_I^1 E_J^2  f, \\
A_7(b,f) &= \sum_{I, J} \Delta_I^1 E_J^2  b \Delta_{I \times J} f, \\
A_8(b,f) &= \sum_{I, J}  \Delta_I^1 E_J^2 b E_I^1 \Delta_J^2 f.
\end{align*}
We grouped these into two collections, because these are handled differently.

When desired, these operators can be written with Haar functions using 
$$\Delta_{I \times J} g = \sum_{I, J} \langle g, h_I \otimes h_J\rangle h_I \otimes h_J,  \,\,
\Delta_I^1 g = h_I \otimes \langle g, h_I \rangle_1 \textup{ and } \Delta_J^2 g = \langle g, h_J \rangle_2 \otimes h_J,
$$
where we have suppressed the signatures $h_I = h_I^{\epsilon}$ and $h_J = h_J^{\delta}$, $\epsilon \in \{0,1\}^n \setminus \{0\}$, $\delta \in \{0,1\}^m \setminus \{0\}$, of the Haar functions.
This means that the finite summations over the signatures are implicitly understood. To understand things correctly, one has to be slightly careful when a term like $h_I h_I$ or $h_J h_J$ appears (as they do e.g. when expanding $A_1$ using Haar functions).
This really can be of the form $h_I^{\epsilon_1} h_I^{\epsilon_2}$ for possibly different $\epsilon_1, \epsilon_2$. However, the only property we will
use is that $|h_I h_I| = 1_I / |I|$, i.e. we always treat such products as non-cancellative objects (the available cancellation when $\epsilon_1 \ne \epsilon_2$
is simply never needed or used).

Suppose that $\|b\|_{\BMO_{\textup{prod}}(\R^{n+m})} = 1$. Then standard theory tells us that for $i = 1, \ldots, 4$ we have
\begin{equation}\label{eq:wforlargeA1}
\|A_i(b, f)\|_{L^p(w)} \lesssim C([w]_{A_p(\R^n \times \R^m)}) \|f\|_{L^p(w)}, \, p \in (1,\infty), \, w \in A_p(\R^n \times \R^m).
\end{equation}
This is because these operators are bi-parameter paraproducts, and their boundedness follows easily by using
$$
\sum_{I, J} |\langle b, h_I \otimes h_J \rangle| |A_{IJ}| \lesssim \Big\|\Big( \sum_{I,J} |A_{IJ}|^2 \frac{1_{I \times J}}{|I \times J|} \Big)^{1/2} \Big\|_{L^1(\R^{n+m})}.
$$
For a simple proof of this inequality see e.g. Proposition 4.1 of \cite{MO}.

If we assume more in that $\|b\|_{\bmo(\R^{n+m})} = 1$, then also for $i = 5, \ldots, 8$ we have
\begin{equation}\label{eq:wforlargeA2}
\|A_i(b, f)\|_{L^p(w)} \lesssim C([w]_{A_p(\R^n \times \R^m)}) \|f\|_{L^p(w)}, \, p \in (1,\infty), \, w \in A_p(\R^n \times \R^m).
\end{equation}
The proofs of these bounds are similar to the above ones, and are proved using that uniformly on $I$ we have
$$
\sum_J \Big| \Big\langle b, \frac{1_I}{|I|} \otimes h_J\Big\rangle\Big| |A_{IJ}| \lesssim \Big\|\Big( \sum_{J} |A_{IJ}|^2 \frac{1_J}{|J|} \Big)^{1/2} \Big\|_{L^1(\R^m)}.
$$

We also define
$$
a^1_1(b,f) = \sum_I \Delta_I^1 b \Delta_I^1 f
$$
and
$$
a^1_2(b,f) = \sum_I \Delta_I^1 b E_I^1 f.
$$
Again, we have that if $\|b\|_{\bmo(\R^{n+m})} = 1$ then for $i=1,2$ we have
\begin{equation}\label{eq:wforsmallA}
\|a_i^1(b, f)\|_{L^p(w)} \lesssim C([w]_{A_p(\R^n \times \R^m)}) \|f\|_{L^p(w)}, \,\, p \in (1,\infty), \, w \in A_p(\R^n \times \R^m).
\end{equation}
The operators $a^2_1(b,f)$ and $a^2_2(b,f)$ are defined analogously.

Let now $I_0 \in \calD^n$ and $ J_0 \in \calD^m$, and suppose $b \in \bmo(\R^{n+m})$ and  $f\in L^{p_0}(\R^{n+m})$ for some $p_0 \in (1, \infty)$.
We introduce our basic expansions of $\langle bf, h_{I_0} \otimes h_{J_0}\rangle$ and $\langle bf, h_{I_0} \otimes h_{J_0}^0\rangle$ (the expansion
in the case $\langle bf, h_{I_0}^0 \otimes h_{J_0}\rangle$ being symmetric, of course).
\subsection{Expansion of $\langle bf, h_{I_0} \times h_{J_0} \rangle$}
We know that $b \in L^{p_0'}_{\loc}(\R^{n+m})$. Therefore,
there holds
$$
1_{I_0 \times J_0} b 
= \sum_{\substack{I_1\times J_1 \in \calD \\ I_1 \times J_1 \subset I_0 \times J_0}}\Delta_{I_1 \times J_1} b
+\sum_{\substack{J_1 \in \calD^m \\ J_1 \subset J_0}} E^1_{I_0} \Delta^2_{J_1} b
+ \sum_{\substack{I_1 \in \calD^n \\ I_1 \subset I_0}} \Delta^1_{I_1} E^2_{J_0} b
+ E_{I_0 \times J_0} b.
$$
Let us denote these terms by $I_i$, $i=1,2,3,4$, in the respective order.
We have the corresponding decomposition of $f$, whose terms we denote by $II_i$, $i=1,2,3,4$. Notice that
$$
\sum_{i=1}^4 \langle I_1 II_i, h_{I_0} \otimes h_{J_0} \rangle = \sum_{i=1}^4 \langle A_i(b, f), h_{I_0} \otimes h_{J_0} \rangle,
$$
$$
\sum_{i=1}^4 \langle I_2 II_i, h_{I_0} \otimes h_{J_0} \rangle = \sum_{i=5}^6 \langle A_i(b, f), h_{I_0} \otimes h_{J_0} \rangle,
$$
$$
\sum_{i=1}^4 \langle I_3 II_i, h_{I_0} \otimes h_{J_0} \rangle = \sum_{i=7}^8 \langle A_i(b, f), h_{I_0} \otimes h_{J_0} \rangle
$$
and
$$
\sum_{i=1}^4 \langle I_4 II_i, h_{I_0} \otimes h_{J_0} \rangle = \langle b \rangle_{I_0 \times J_0} \langle f, h_{I_0} \otimes h_{J_0} \rangle.
$$
Therefore, we have
\begin{equation}\label{eq:biparEX}
\langle bf, h_{I_0} \otimes h_{J_0} \rangle = \sum_{i=1}^8 \langle A_i(b, f), h_{I_0} \otimes h_{J_0} \rangle + \langle b \rangle_{I_0 \times J_0} \langle f, h_{I_0} \otimes h_{J_0} \rangle.
\end{equation}

\subsection{Expansion of $\langle bf, h_{I_0} \times h_{J_0}^0 \rangle$}
This time we write
$$
1_{I_0} b  = \sum_{\substack{I_1 \in \calD^n \\ I_1 \subset I_0}}\Delta_{I_1}^1 b + E_{I_0}^1 b,
$$
and similarly for $f$, and notice that
$$
\langle bf, h_{I_0} \rangle_1 = \sum_{i=1}^2 \langle a_i^1(b,f), h_{I_0}\rangle_1 + \langle b \rangle_{I_0,1} \langle f, h_{I_0}\rangle_1.
$$
Therefore, we have
\begin{equation}\label{eq:1EX}
\begin{split}
\langle bf, h_{I_0} \otimes h_{J_0}^0 \rangle &= \sum_{i=1}^2 \langle a_i^1(b,f), h_{I_0} \otimes h_{J_0}^0 \rangle \\
&+ \langle (\langle b \rangle_{I_0,1} - \langle b \rangle_{I_0 \times J_0}) \langle f, h_{I_0}\rangle_1, h_{J_0}^0\rangle
+ \langle b \rangle_{I_0 \times J_0} \langle f, h_{I_0} \otimes h_{J_0}^0 \rangle.
\end{split}
\end{equation}

When we have $\langle bf, h_{I_0}^0 \otimes h_{J_0}^0 \rangle$ we do not expand at all, we simply add and subtract an average:
\begin{equation}\label{eq:noEX}
\langle bf, h_{I_0}^0 \otimes h_{J_0}^0 \rangle = \langle (b-\langle b \rangle_{I_0 \times J_0})f, h_{I_0}^0 \otimes h_{J_0}^0 \rangle
+ \langle b \rangle_{I_0 \times J_0} \langle f, h_{I_0}^0 \otimes h_{J_0}^0 \rangle.
\end{equation}

\section{Key identities related to commutators}\label{sec:KeyIdentities}
We state some lemmas related to identities that appear when we expand using
\eqref{eq:biparEX}, \eqref{eq:1EX}, the symmetric form of \eqref{eq:1EX} or \eqref{eq:noEX} in commutators of model operators.
The proofs of these lemmas are trivial applications of these identities, and the cancellation present in the commutators
is simply exploited by grouping the terms involving free averages of $b$ together (the last term appearing in these identities).

In the first order commutators of model operators there are essentially seven different symmetries
depending on how many non-cancellative Haar functions we have in the model operator in question, and how they are situated -- for these symmetries see the proof of Theorem \ref{thm:com1ofmodelBanach}. We only explicitly state lemmas relevant for three of these symmetries, but the remaining
identities are completely analogous and obtained by expanding using the described protocol.

Below we have $I,Q \in \calD^n$ and $J, R \in \calD^m$ with some fixed dyadic grids $\calD^n$ and $\calD^m$.
\begin{lem}\label{lem:case1}
We have
\begin{align*}
\langle f, h_I \otimes h_J& \rangle \langle bg, h_Q \otimes h_R \rangle - \langle bf, h_I \otimes h_J \rangle \langle g, h_Q \otimes h_R \rangle \\
&= \sum_{i=1}^8 \langle f, h_I \otimes h_J \rangle \langle A_i(b,g), h_Q \otimes h_R \rangle \\
&- \sum_{i=1}^8 \langle A_i(b,f), h_I \otimes h_J \rangle \langle g, h_Q \otimes h_R \rangle \\
&+ [\langle b \rangle_{Q \times R} - \langle b \rangle_{I \times J}] \langle f, h_I \otimes h_J \rangle \langle g, h_Q \otimes h_R \rangle.
\end{align*}
\end{lem}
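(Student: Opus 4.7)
The plan is to prove Lemma \ref{lem:case1} by a direct application of the bi-parameter expansion identity \eqref{eq:biparEX} to each of the two pairings $\langle bg, h_Q \otimes h_R \rangle$ and $\langle bf, h_I \otimes h_J \rangle$, followed by straightforward bookkeeping. No nontrivial cancellation or estimation is needed; the identity is essentially algebraic.

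First I would invoke \eqref{eq:biparEX} with $(I_0, J_0) = (Q, R)$ applied to the function $g$, obtaining
\begin{equation*}
\langle bg, h_Q \otimes h_R \rangle = \sum_{i=1}^{8} \langle A_i(b,g), h_Q \otimes h_R \rangle + \langle b \rangle_{Q \times R} \langle g, h_Q \otimes h_R \rangle.
\end{equation*}
Analogously, applying \eqref{eq:biparEX} with $(I_0, J_0) = (I, J)$ to $f$ yields
\begin{equation*}
\langle bf, h_I \otimes h_J \rangle = \sum_{i=1}^{8} \langle A_i(b,f), h_I \otimes h_J \rangle + \langle b \rangle_{I \times J} \langle f, h_I \otimes h_J \rangle.
\end{equation*}

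Next I would multiply the first identity by the scalar $\langle f, h_I \otimes h_J \rangle$ and the second by $\langle g, h_Q \otimes h_R \rangle$, and then take the difference. The paraproduct terms split cleanly into the two sums appearing in the statement. The remaining two terms are
\begin{equation*}
\langle b \rangle_{Q \times R} \langle f, h_I \otimes h_J \rangle \langle g, h_Q \otimes h_R \rangle - \langle b \rangle_{I \times J} \langle f, h_I \otimes h_J \rangle \langle g, h_Q \otimes h_R \rangle,
\end{equation*}
which combine into the claimed free-average difference $[\langle b \rangle_{Q \times R} - \langle b \rangle_{I \times J}] \langle f, h_I \otimes h_J \rangle \langle g, h_Q \otimes h_R \rangle$.

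There is no real obstacle here; the content of the lemma is precisely that the bi-parameter expansion isolates the paraproduct contributions $A_1, \ldots, A_8$ and leaves behind only the single scalar term involving a cube-average of $b$, whose commutator structure produces the desired difference $\langle b \rangle_{Q\times R} - \langle b \rangle_{I\times J}$. The only point that deserves mild care is that the identity \eqref{eq:biparEX} is applied here to both $f$ and $g$ (rather than just once), but since the coefficients $\langle f, h_I \otimes h_J \rangle$ and $\langle g, h_Q \otimes h_R \rangle$ do not involve $b$, these manipulations pass through without issue. This matches the remark in Section \ref{sec:KeyIdentities} that these lemmas are trivial consequences of the expansion identities, with the cancellation baked into the commutator structure.
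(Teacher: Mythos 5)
Your proof is correct and is exactly the argument the paper intends: Section \ref{sec:KeyIdentities} states that these identities follow by applying the expansions of Section \ref{sec:marprod} (here \eqref{eq:biparEX} to both $\langle bf, h_I \otimes h_J\rangle$ and $\langle bg, h_Q \otimes h_R\rangle$) and grouping the free-average terms, which is precisely what you do.
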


\begin{lem}\label{lem:case2}
We have
\begin{align*}
\langle f, h_I^0 \otimes h_J& \rangle \langle bg, h_Q \otimes h_R \rangle - \langle bf, h_I^0 \otimes h_J \rangle \langle g, h_Q \otimes h_R \rangle \\
&= \sum_{i=1}^8 \langle f, h_I^0 \otimes h_J \rangle \langle A_i(b,g), h_Q \otimes h_R \rangle \\
&- \sum_{i=1}^2 \langle a_i^2(b,f), h_I^0 \otimes h_J \rangle \langle g, h_Q \otimes h_R \rangle \\
&+ \bla (\langle b \rangle_{I \times J} - \langle b \rangle_{J, 2})\langle f, h_J\rangle_2, h_{I}^0\bra \langle g, h_Q \otimes h_R \rangle \\
&+  [\langle b \rangle_{Q \times R} - \langle b \rangle_{I \times J}] \langle f, h_I^0 \otimes h_J \rangle \langle g, h_Q \otimes h_R \rangle
\end{align*}
\end{lem}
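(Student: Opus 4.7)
The plan is a direct verification by substituting the two relevant expansion identities from Section \ref{sec:marprod} into the left-hand side. Since the pairing $\langle bg, h_Q \otimes h_R\rangle$ involves two cancellative Haar functions, the appropriate expansion is the bi-parameter one, namely \eqref{eq:biparEX}. The pairing $\langle bf, h_I^0 \otimes h_J\rangle$ is mixed (non-cancellative in the first variable, cancellative in the second), so the appropriate expansion is the obvious symmetric form of \eqref{eq:1EX}, obtained by decomposing $1_J b$ in one-parameter martingales $\Delta^2_{J_1} b$ in the second variable only: it reads $\langle bf, h_I^0 \otimes h_J\rangle = \sum_{i=1}^2 \langle a_i^2(b,f), h_I^0 \otimes h_J\rangle + \bla (\langle b \rangle_{J,2} - \langle b \rangle_{I \times J}) \langle f, h_J\rangle_2, h_I^0\bra + \langle b \rangle_{I \times J} \langle f, h_I^0 \otimes h_J \rangle$.

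Substituting both expansions into the LHS and distributing produces four kinds of contributions: the $A_i(b,g)$ paraproducts (multiplied by $\langle f, h_I^0 \otimes h_J\rangle$) with a plus sign; the $a_i^2(b,f)$ paraproducts (multiplied by $\langle g, h_Q \otimes h_R\rangle$) with a minus sign; a middle correction $-\bla (\langle b \rangle_{J,2} - \langle b \rangle_{I \times J}) \langle f, h_J\rangle_2, h_I^0\bra \langle g, h_Q \otimes h_R\rangle$, which is rewritten as $+\bla (\langle b \rangle_{I \times J} - \langle b \rangle_{J,2}) \langle f, h_J\rangle_2, h_I^0\bra \langle g, h_Q \otimes h_R\rangle$ to match the statement; and two ``free-average'' terms sharing the common factor $\langle f, h_I^0 \otimes h_J\rangle \langle g, h_Q \otimes h_R\rangle$ with coefficients $\langle b \rangle_{Q \times R}$ and $-\langle b \rangle_{I \times J}$ respectively. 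These last two combine into the final remainder $[\langle b \rangle_{Q \times R} - \langle b \rangle_{I \times J}]\langle f, h_I^0 \otimes h_J\rangle \langle g, h_Q \otimes h_R\rangle$.

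There is no real obstacle, as the authors themselves note: the lemma is a trivial consequence of the expansion protocol. The only thing one has to get right is the selection of the correct expansion for each pairing (bi-parameter when both Haar functions are cancellative; one-parameter in the variable where the Haar function is cancellative when the other is non-cancellative). The commutator cancellation is then encoded automatically in the coefficient $\langle b \rangle_{Q \times R} - \langle b \rangle_{I \times J}$ of the free-average term, exactly as already explained at the start of Section \ref{sec:KeyIdentities}.
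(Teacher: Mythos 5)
Your proposal is correct and follows exactly the approach the paper intends: substitute \eqref{eq:biparEX} for $\langle bg, h_Q\otimes h_R\rangle$ and the symmetric form of \eqref{eq:1EX} for $\langle bf, h_I^0\otimes h_J\rangle$, distribute, flip the sign on the middle correction term, and combine the two free-average contributions into $[\langle b\rangle_{Q\times R}-\langle b\rangle_{I\times J}]$. This is precisely what the authors describe at the start of Section~\ref{sec:KeyIdentities} as a ``trivial application'' of the expansion protocol, so there is nothing to add.
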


\begin{lem}\label{lem:case3}
We have
\begin{align*}
\langle f, h_I^0 \otimes h_J& \rangle \langle bg, h_Q \otimes h_R^0 \rangle - \langle bf, h_I^0 \otimes h_J \rangle \langle g, h_Q \otimes h_R^0 \rangle \\
&=\sum_{i=1}^2  \langle f, h_I^0 \otimes h_J \rangle \langle a^1_i (b,g), h_Q \otimes h_R^0 \rangle \\
&-\sum_{i=1}^2 \langle a^2_i(b,f), h_I^0 \otimes h_J \rangle \langle g, h_Q \otimes h_R^0 \rangle \\
&+\langle f, h_I^0 \otimes h_J \rangle \bla (\langle b \rangle_{Q,1}-\langle b \rangle_{Q \times R}) \langle g ,h_Q \rangle_1, h^0_R \bra \\
&-\bla (\langle b \rangle_{J,2}-\langle b \rangle_{I \times J}) \langle f,h_J \rangle_2,h^0_I \bra \langle g, h_Q \otimes h_R^0 \rangle \\
&+[\langle b \rangle_{Q \times R} - \langle b \rangle_{I \times J}] \langle f, h_I^0 \otimes h_J \rangle \langle g, h_Q \otimes h_R^0 \rangle.
\end{align*}
\end{lem}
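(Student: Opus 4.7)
The plan is a purely algebraic manipulation using the pairing expansions \eqref{eq:1EX} and its symmetric counterpart. The left-hand side is a commutator-style difference of two bilinear terms, and our protocol from Section \ref{sec:marprod} tells us exactly which expansion to use on each pairing: for $\langle bg, h_Q\otimes h_R^0\rangle$ we have a cancellative Haar function in the first variable and a non-cancellative one in the second, so we apply \eqref{eq:1EX} directly; for $\langle bf, h_I^0\otimes h_J\rangle$ we have the reversed signature, so we apply the symmetric form of \eqref{eq:1EX} (with the roles of $\R^n$ and $\R^m$ swapped, producing $a^2_i$ in place of $a^1_i$).

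First, I would write down the two expansions explicitly. From \eqref{eq:1EX} applied to $g$ paired with $h_Q\otimes h_R^0$,
\begin{align*}
\langle bg, h_Q\otimes h_R^0\rangle
&= \sum_{i=1}^2 \langle a^1_i(b,g), h_Q\otimes h_R^0\rangle \\
&\quad + \bla (\langle b\rangle_{Q,1}-\langle b\rangle_{Q\times R})\langle g, h_Q\rangle_1, h_R^0\bra
+ \langle b\rangle_{Q\times R}\langle g, h_Q\otimes h_R^0\rangle,
\end{align*}
and from the symmetric form applied to $f$ paired with $h_I^0\otimes h_J$,
\begin{align*}
\langle bf, h_I^0\otimes h_J\rangle
&= \sum_{i=1}^2 \langle a^2_i(b,f), h_I^0\otimes h_J\rangle \\
&\quad + \bla (\langle b\rangle_{J,2}-\langle b\rangle_{I\times J})\langle f, h_J\rangle_2, h_I^0\bra
+ \langle b\rangle_{I\times J}\langle f, h_I^0\otimes h_J\rangle.
\end{align*}

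Next, I multiply the first identity by $\langle f, h_I^0\otimes h_J\rangle$ and the second by $\langle g, h_Q\otimes h_R^0\rangle$, subtract, and group. The four $a^j_i$ terms land, with the correct signs, on the first two lines of the claimed right-hand side; the two ``localised average'' terms produce lines three and four; and the two remaining free-average contributions combine into
\[
\big[\langle b\rangle_{Q\times R} - \langle b\rangle_{I\times J}\big]\,\langle f, h_I^0\otimes h_J\rangle\,\langle g, h_Q\otimes h_R^0\rangle,
\]
which is exactly the last line. There is no real obstacle here: the identity is essentially a bookkeeping exercise, and the cancellation that makes the commutator tractable is encoded precisely in the fact that the $\langle b\rangle_{Q\times R}$ terms coming from the two expansions have opposite signs and can be collected into a single difference of averages, while the other ``free-average'' pieces tied to just one of $I\times J$ or $Q\times R$ survive only in the localised form with $\langle b\rangle_{J,2}$ or $\langle b\rangle_{Q,1}$. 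The only step requiring a bit of care is matching signatures correctly between the two expansions to make sure one uses $a^1_i$ on the $g$-side and $a^2_i$ on the $f$-side.
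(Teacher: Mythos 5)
Your proposal is correct and is exactly the argument the paper intends: apply \eqref{eq:1EX} to $\langle bg, h_Q\otimes h_R^0\rangle$ and its symmetric form to $\langle bf, h_I^0\otimes h_J\rangle$, multiply by the free pairings, subtract, and group the free averages of $b$ into the single difference $\langle b\rangle_{Q\times R}-\langle b\rangle_{I\times J}$. Nothing further is needed.
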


We also record two additional lemmas, which are used in conjunction with such identities.
\begin{lem}\label{lem:maximalbound}
For $I \in \calD^n$ and $J \in \calD^m$ we have
$$
\big|\bla (\langle b \rangle_{J, 2} -\langle b \rangle_{I \times J} )\langle f, h_J\rangle_2 \bra_I\big| 
\le \Big\langle \varphi_{\calD^m, b}(f), \frac{1_I}{|I|} \otimes h_J\Big\rangle
$$
and
$$
\big|\bla (b-\langle b \rangle_{I \times J})f \bra_{I \times J}\big| \lesssim \langle  M_b f \rangle_{I \times J}.
$$
\end{lem}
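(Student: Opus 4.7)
The plan is to treat both inequalities by the same two-step recipe: pass from the given average against $f$ (or $\langle f,h_J\rangle_2$) to an average of absolute values, then recognise the resulting average over $I\times J$ (respectively over $I$) as being dominated pointwise on that set by the relevant $b$-adapted maximal function, and finally average that pointwise bound back over the same set.

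For the second inequality, I would simply bound
\[
\big|\bla (b-\langle b\rangle_{I\times J})f\bra_{I\times J}\big|\le \frac{1}{|I||J|}\iint_{I\times J}|b-\langle b\rangle_{I\times J}||f|.
\]
The right-hand side is, by the very definition of $M_b$, a lower bound for $M_bf(x)$ at every point $x\in I\times J$, so averaging in $x$ over $I\times J$ gives the desired dominance by $\langle M_bf\rangle_{I\times J}$. No smallness in the constant is needed.

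For the first inequality, the key observation is $\langle b\rangle_{I\times J}=\langle \langle b\rangle_{J,2}\rangle_I$, which lets me rewrite the left-hand side as
\[
\frac{1}{|I|}\int_I\bigl(\langle b\rangle_{J,2}(x_1)-\langle\langle b\rangle_{J,2}\rangle_I\bigr)\langle f,h_J\rangle_2(x_1)\,dx_1.
\]
On the other side, unfolding the definition of $\varphi_{\calD^m,b}$ collapses the $J'$-sum (because of the $h_J$ pairing) and gives exactly $\langle M_{\langle b\rangle_{J,2}}\langle f,h_J\rangle_2\rangle_I$. So the claim reduces to the one-parameter estimate
\[
\Bigl|\frac{1}{|I|}\int_I(\langle b\rangle_{J,2}-\langle\langle b\rangle_{J,2}\rangle_I)\langle f,h_J\rangle_2\Bigr|\le \langle M_{\langle b\rangle_{J,2}}\langle f,h_J\rangle_2\rangle_I,
\]
which follows by exactly the same recipe as above applied with the $\BMO(\R^n)$ function $\langle b\rangle_{J,2}$ in place of $b$: bound the LHS by $\frac{1}{|I|}\int_I|\langle b\rangle_{J,2}-\langle\langle b\rangle_{J,2}\rangle_I||\langle f,h_J\rangle_2|$, observe this is a lower bound for $M_{\langle b\rangle_{J,2}}\langle f,h_J\rangle_2(x_1)$ at each $x_1\in I$, and average.

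There is essentially no obstacle here; the only thing worth being slightly careful about is the identity $\langle b\rangle_{I\times J}=\langle\langle b\rangle_{J,2}\rangle_I$ (which turns a bi-parameter centering into a one-parameter one so that the $\BMO(\R^n)$-adapted maximal function $M_{\langle b\rangle_{J,2}}$ is the correct object), and the absence of $J'\neq J$ contributions in the pairing with $\frac{1_I}{|I|}\otimes h_J$, which comes from orthogonality of Haar functions in the second variable.
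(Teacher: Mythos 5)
Your proposal is correct and follows essentially the same route as the paper: the paper's one-line proof also rests on the pointwise domination by $M_{\langle b \rangle_{J,2}}\langle f, h_J\rangle_2$ on $I$ (using $\langle b\rangle_{I\times J}=\langle\langle b\rangle_{J,2}\rangle_I$) together with Haar orthogonality to identify $\langle M_{\langle b\rangle_{J,2}}\langle f,h_J\rangle_2\rangle_I$ with the pairing against $\varphi_{\calD^m,b}(f)$, and treats the second inequality as immediate from the definition of $M_b$. You have merely written out the same steps in more detail.
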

\begin{proof}
There holds
\begin{equation*}
\big|\bla (\langle b \rangle_{J, 2} -\langle b \rangle_{I \times J} )\langle f, h_J\rangle_2 \bra_I\big| \le \langle  M_{\langle b \rangle_{J,2}} \langle f, h_J\rangle_2 \rangle_{I}
= \Big\langle \varphi_{\calD^m, b}(f), \frac{1_I}{|I|} \otimes h_J\Big\rangle,
\end{equation*}
where the last inequality follows from orthogonality. The second claimed inequality is even more immediate.
\end{proof}

\begin{lem}\label{lem:bmobound}
Suppose $I^{(i)} = Q^{(q)} = K$ and $J^{(j)} = R^{(r)} = V$. If $\|b\|_{\bmo(\R^{n+m})} = 1$ then we have
$$
|\langle b \rangle_{Q \times R} - \langle b \rangle_{I \times J}| \lesssim \max(i, j, q, r).
$$
\end{lem}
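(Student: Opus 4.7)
The plan is to reduce to a telescoping argument via the common ancestor rectangle $K\times V$, which contains both $I\times J$ and $Q\times R$ by hypothesis. By the triangle inequality
$$
|\langle b\rangle_{Q\times R} - \langle b\rangle_{I\times J}|
\le |\langle b\rangle_{I\times J} - \langle b\rangle_{K\times V}|
+ |\langle b\rangle_{K\times V} - \langle b\rangle_{Q\times R}|,
$$
so it suffices to bound each of the two symmetric pieces by the corresponding pair of complexities.

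To handle the first piece I would walk from $I\times J$ to $K\times V$ in two stages, changing one parameter at a time. Insert $K\times J$ as an intermediate rectangle and write
$$
|\langle b\rangle_{I\times J} - \langle b\rangle_{K\times V}|
\le |\langle b\rangle_{I\times J} - \langle b\rangle_{K\times J}|
+ |\langle b\rangle_{K\times J} - \langle b\rangle_{K\times V}|.
$$
For the first term, telescope along the chain of dyadic ancestors $I = I^{(0)} \subset I^{(1)} \subset \cdots \subset I^{(i)} = K$ at the fixed $J$-level; each consecutive difference is bounded by a dimensional constant times $\|b\|_{\bmo(\R^{n+m})} = 1$ using the standard estimate
$$
|\langle b\rangle_{I^{(\ell)}\times J} - \langle b\rangle_{I^{(\ell+1)}\times J}|
\le \frac{1}{|I^{(\ell)}\times J|}\int_{I^{(\ell)}\times J} |b - \langle b\rangle_{I^{(\ell+1)}\times J}|
\lesssim \|b\|_{\bmo},
$$
where the last step uses $|I^{(\ell+1)}| = 2^n |I^{(\ell)}|$ and the definition of $\bmo$ on the rectangle $I^{(\ell+1)}\times J$. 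Summing the $i$ terms gives a bound of $i$. An identical telescoping through $J^{(\ell)}$ at fixed $K$-level bounds the second term by $j$. Hence the first piece is $\lesssim i+j$, and by the symmetric argument the second piece is $\lesssim q+r$.

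Combining, $|\langle b\rangle_{Q\times R} - \langle b\rangle_{I\times J}| \lesssim (i+j) + (q+r) \lesssim \max(i,j,q,r)$, which is the claimed bound. There is no real obstacle here: the only point to be careful about is that the one-step telescoping argument requires comparing the $\bmo$ seminorm on a rectangle (not a cube), which is exactly what the definition of $\bmo(\R^n\times\R^m)$ provides, and the John--Nirenberg-type equivalence recorded in Section \ref{ss:bmo} is not even needed since we are using only $L^1$ averages.
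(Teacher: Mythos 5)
Your proof is correct and follows essentially the same route as the paper: pass through the common ancestor $K\times V$ by the triangle inequality and then telescope one dyadic generation at a time, bounding each one-step difference of rectangle averages by a constant via the $\bmo$ definition. The paper compresses the telescoping into a single ``use repeatedly'' step, but the argument is the same.
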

\begin{proof}
Estimate
$$
|\langle b \rangle_{Q \times R} - \langle b \rangle_{I \times J}| \le |\langle b \rangle_{Q \times R} - \langle b \rangle_{K \times V}|
+ |\langle b \rangle_{I \times J}- \langle b \rangle_{K \times V}|,
$$
and use repeatedly that
$$
|\langle b \rangle_{Q \times R} - \langle b \rangle_{Q^{(1)} \times R}| \le \langle |b- \langle b \rangle_{Q^{(1)} \times R}| \rangle_{Q \times R}
\lesssim \langle |b- \langle b \rangle_{Q^{(1)} \times R}| \rangle_{Q^{(1)} \times R} \le 1.
$$
\end{proof}

\section{Banach range boundedness of commutators of model operators}\label{sec:BanachforModels}
For the Banach range theory of commutators we only need the rather easy fact that all the model operators from Section \ref{sec:defbilinbiparmodel}
are of the following general type. Fix dyadic grids $\calD^n$ and $\calD^m$.
Let $U = U^v_k$, $0 \le k_i \in \Z$ and $0 \le v_i \in \Z$, $i=1,2,3$, be a bilinear bi-parameter operator such that 
\begin{equation*}
\begin{split}
\langle U(f_1,f_2),f_3 \rangle
= \sum_{\substack{K \in \calD^n \\ V \in \calD^m}} 
\sum_{\substack{I_1, I_2, I_3 \in \calD^n \\ I_1^{(k_1)} = I_2^{(k_2)} = I_3^{(k_3)} = K}} 
&\sum_{\substack{J_1, J_2, J_3 \in \calD^m \\ J_1^{(v_1)} = J_2^{(v_2)} = J_3^{(v_3)} = V}} a_{K, V, (I_i), (J_j)} \\
&\times \langle f_1, \wt h_{I_1} \otimes  \wt h_{J_1}\rangle \langle f_2, \wt h_{I_2} \otimes \wt h_{J_2}\rangle  
\langle f_3, \wt  h_{I_3} \otimes \wt h_{J_3} \rangle,
\end{split}
\end{equation*}
where $a_{K, V, (I_i), (J_j)}$ are constants and for all $i=1,2,3$ we have $ \wt h_{I_i}= h_{I_i}$ for all $I_i \in \calD^n$ or $\wt h_{I_i}= h_{I_i}^0$ for all $I_i\in \calD^n$,
and similarly with the functions $\wt h_{J_j}$. We assume that for all $p,q,r \in (1,\infty)$ with $1/p + 1/q = 1/r$ we have
\begin{equation}\label{eq:bb}
\begin{split}
\sum_{\substack{K \in \calD^n \\ V \in \calD^m}} 
\sum_{\substack{I_1, I_2, I_3 \in \calD^n \\ I_1^{(k_1)} = I_2^{(k_2)} = I_3^{(k_3)} = K}} 
&\sum_{\substack{J_1, J_2, J_3 \in \calD^m \\ J_1^{(v_1)} = J_2^{(v_2)} = J_3^{(v_3)} = V}} \big| a_{K, V, (I_i), (J_j)} \\
&\times \langle f_1, \wt h_{I_1} \otimes  \wt h_{J_1}\rangle \langle f_2, \wt h_{I_2} \otimes \wt h_{J_2}\rangle  
\langle f_3, \wt  h_{I_3} \otimes \wt h_{J_3} \rangle \big | \\
& \lesssim \| f_1 \|_{L^p} \| f_2 \|_{L^q}\| f_3 \|_{L^{r'}}. 
\end{split}
\end{equation}
We do not assume anything else about the constants $a_{K, V, (I_i), (J_j)}$.
In particular, $U$ can be a bilinear bi-parameter shift, a partial paraproduct or a full paraproduct.

\begin{thm}\label{thm:com1ofmodelBanach}
Let $p,q,r \in (1,\infty)$, $1/p + 1/q = 1/r$, $0 \le k_i \in \Z$ and $0 \le v_i \in \Z$, $i=1,2,3$. 
Let $U = U^v_k$ be a general bilinear bi-parameter model operator satisfying
\eqref{eq:bb}. In particular, $U$ can be a bilinear bi-parameter shift, a partial paraproduct or a full paraproduct.
Then for $b$ such that $\|b\|_{\bmo(\R^{n+m})} = 1$ we have
$$
\|[b, U]_1(f_1, f_2)\|_{L^r(\R^{n+m})} \lesssim (1+\max(k_i, v_i)) \|f_1\|_{L^p(\R^{n+m})} \|f_2\|_{L^q(\R^{n+m})}.
$$

\end{thm}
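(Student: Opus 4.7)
The plan is to dualize, expand $U$ via its representation, and apply the identities from Section~\ref{sec:KeyIdentities} to exploit the commutator cancellation. By duality it suffices to bound $|\langle [b,U]_1(f_1,f_2), f_3\rangle|$ uniformly in $f_3$ with $\|f_3\|_{L^{r'}} = 1$. Rewriting
$$
\langle [b,U]_1(f_1,f_2), f_3\rangle = \langle U(f_1,f_2), bf_3 \rangle - \langle U(bf_1, f_2), f_3 \rangle
$$
and inserting the defining sum of $U$, each fixed tuple $(K,V,(I_i),(J_j))$ contributes $a_{K,V,(I_i),(J_j)}$ multiplied by the difference of two products of three Haar coefficients, with the $b$-factor appearing either in the first or the third slot.

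The central step is to apply, for each such tuple, the appropriate identity among Lemmas~\ref{lem:case1}--\ref{lem:case3} and their analogous symmetric variants. Between them these cover the seven possible cancellation patterns for the pair $(\wt h_{I_1}\otimes\wt h_{J_1},\wt h_{I_3}\otimes\wt h_{J_3})$. Each identity decomposes the commutator contribution into three families: first, \emph{paraproduct pieces} in which $b$ has been absorbed into $f_1$ or $f_3$ via an operator $A_i(b,\cdot)$ or $a^{1,2}_i(b,\cdot)$; second, a \emph{free-average piece} of the form $[\langle b\rangle_{I_3\times J_3} - \langle b\rangle_{I_1\times J_1}]$ times the three original Haar coefficients; and third, \emph{mixed average pieces} of the form $\bla (\langle b\rangle_{J,2}-\langle b\rangle_{I\times J})\langle f_\ell,h_J\rangle_2, h_I^0\bra$ (and variants) arising whenever one expansion uses the one-parameter identity \eqref{eq:1EX} instead of \eqref{eq:biparEX}.

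Each family is then summed over the full index set and estimated by a single application of the hypothesis \eqref{eq:bb} with one of the $f_i$'s replaced by an auxiliary function. For the paraproduct pieces, the replacement is $A_i(b, f_\ell)$ or $a^{1,2}_i(b, f_\ell)$; the unweighted $L^p$ bounds for these operators, recorded in \eqref{eq:wforlargeA1}, \eqref{eq:wforlargeA2} and \eqref{eq:wforsmallA}, absorb the factor $\|b\|_{\bmo(\R^{n+m})}$. For the free-average piece, Lemma~\ref{lem:bmobound} (applied with $i=k_1,j=v_1,q=k_3,r=v_3$) gives the uniform pointwise bound $|\langle b\rangle_{I_3\times J_3}-\langle b\rangle_{I_1\times J_1}| \lesssim \max(k_i, v_i)$, and \eqref{eq:bb} then yields the claimed linear-in-complexity factor $(1+\max(k_i,v_i))$. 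For the mixed average pieces, Lemma~\ref{lem:maximalbound} rewrites the offending coefficient as a Haar pairing of $\varphi_{\calD^m, b}(f_\ell)$ (or, in the fully non-cancellative expansion \eqref{eq:noEX}, of $M_b f_\ell$), after which \eqref{eq:bb} applies with $f_\ell$ replaced by this auxiliary function and Lemma~\ref{lem:bmaxbounds} removes the $\bmo$ dependence in $L^p$.

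The only non-routine bookkeeping is checking that in all seven symmetry cases the emerging operators $A_i(b,\cdot)$, $a^{1,2}_i(b,\cdot)$, $\varphi_{\calD^n,b}$, $\varphi_{\calD^m,b}$ and $M_b$ are bounded on the relevant $L^p$; this is precisely what is guaranteed by Section~\ref{sec:marprod} and Lemma~\ref{lem:bmaxbounds}. I expect the main delicacy to be verifying that each family produced by the identities can be rewritten as a sum of Haar pairings of the same shape as those appearing in $U$ itself, so that a single invocation of \eqref{eq:bb} handles it; this shape-preservation is what allows the argument to unify across shifts, partial paraproducts and full paraproducts in the Banach range.
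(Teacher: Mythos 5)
Your proposal is correct and follows essentially the same route as the paper's proof: a case analysis over the seven cancellation patterns, applying Lemmas \ref{lem:case1}--\ref{lem:case3} (and their analogues) for each fixed tuple, bounding the free-average terms by Lemma \ref{lem:bmobound}, the mixed-average terms via Lemma \ref{lem:maximalbound} rewritten as pairings of $\varphi_{\calD^m,b}(f_\ell)$ or $M_b f_\ell$, and then invoking \eqref{eq:bb} together with the $L^p$ bounds \eqref{eq:wforlargeA1}, \eqref{eq:wforlargeA2}, \eqref{eq:wforsmallA} and Lemma \ref{lem:bmaxbounds}. The shape-preservation point you flag at the end is exactly the observation the paper relies on to run a single unified argument for shifts, partial paraproducts and full paraproducts.
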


\begin{proof}
 We separately treat the different possible combinations of cancellative and non-cancellative Haar functions.
 The proof depends only on what Haar functions  we have paired with $f_1$ and $f_3$ in $\langle U(f_1, f_2), f_3\rangle$.
 All the model operators fall into one of the following cases:

\begin{enumerate}
\item  We have $\langle f_1,  h_{I_1} \otimes  h_{J_1}\rangle
\langle f_3,  h_{I_3} \otimes  h_{J_3}\rangle$.

\item   We have $\langle f_1, h_{I_1}^0 \otimes h_{J_1}\rangle \langle f_3,  h_{I_3} \otimes  h_{J_3}\rangle$, or one of the three other symmetric
cases. 

\item  We have $\langle f_1,  h_{I_1}^0 \otimes  h_{J_1}\rangle
\langle f_3,  h_{I_3} \otimes  h_{J_3}^0\rangle$, or the symmetric case.

\item We have $\langle f_1,  h_{I_1}^0 \otimes  h_{J_1}^0\rangle
\langle f_3,  h_{I_3} \otimes  h_{J_3}\rangle$, or the symmetric case.

\item We have $\langle f_1,  h_{I_1}^0 \otimes  h_{J_1}\rangle
\langle f_3,  h_{I_3}^0 \otimes  h_{J_3}\rangle$, or the symmetric case.

\item We have  $\langle f_1,  h_{I_1}^0 \otimes  h_{J_1}^0\rangle
\langle f_3,  h_{I_3}^0 \otimes  h_{J_3}\rangle$, or one of the other three symmetric cases.

\item We have  $\langle f_1,  h_{I_1}^0 \otimes  h_{J_1}^0\rangle
\langle f_3,  h_{I_3}^0 \otimes  h_{J_3}^0\rangle$.

\end{enumerate}

\textbf{Case 1.}  
We use Lemma \ref{lem:case1} with $f = f_1$, $I = I_1$, $J=J_1$ and $g=f_3$, $Q=I_3$, $R=J_3$. 
Using Lemma \ref{lem:bmobound}, the boundedness property \eqref{eq:bb} and the boundedness of
the operators $A_i(b, \cdot)$, $i = 1,\ldots, 8$, we have that
$$
|\langle [b,U]_1(f_1, f_2), f_3\rangle| \lesssim \max(k_i, v_i) \|f_1\|_{L^p(\R^{n+m})} \|f_2\|_{L^q(\R^{n+m})} \|f_3\|_{L^{r'}(\R^{n+m})}.
$$

\textbf{Case 2.}
This time we use Lemma \ref{lem:case2}. Then we use Lemma \ref{lem:maximalbound}, Lemma \ref{lem:bmobound}, the boundedness property \eqref{eq:bb},
the boundedness of the operators $A_i(b, \cdot)$, $i = 1,\ldots, 8$, the boundedness of the operators $a_i^1(b,\cdot)$, $a_i^2(b,\cdot)$, $i = 1,2$, and Lemma
\ref{lem:bmaxbounds}.
This gives us the desired bound. For example, one calculates like
\begin{align*}
&\sum_{\substack{K \in \calD^n \\ V \in \calD^m}} \sum_{\substack{I_1, I_2, I_3 \in \calD^n \\ I_1^{(k_1)} = I_2^{(k_2)} = I_3^{(k_3)} = K}} 
\sum_{\substack{J_1, J_2, J_3 \in \calD^m \\ J_1^{(v_1)} = J_2^{(v_2)} = J_3^{(v_3)} = V}} |a_{K, V, (I_i), (J_j)}| \\
&\times \big|\bla (\langle b \rangle_{J_1,2}-\langle b \rangle_{I_1 \times J_1}) \langle f_1,h_{J_1} \rangle_2,h^0_{I_1} \bra \big|
|\langle f_2,  \wt h_{I_2} \otimes \wt h_{J_2}\rangle|
|\langle f_3, h_{I_3} \otimes h_{J_3} \rangle| \\
&\lesssim
\sum_{\substack{K \in \calD^n \\ V \in \calD^m}} \sum_{\substack{I_1, I_2, I_3 \in \calD^n \\ I_1^{(k_1)} = I_2^{(k_2)} = I_3^{(k_3)} = K}} 
\sum_{\substack{J_1, J_2, J_3 \in \calD^m \\ J_1^{(v_1)} = J_2^{(v_2)} = J_3^{(v_3)} = V}} |a_{K, V, (I_i), (J_j)}| \\
&\times \langle \varphi_{\calD^m, b}(f_1), h_{I_1}^0 \otimes h_{J_1}\rangle
|\langle f_2,  \wt h_{I_2} \otimes \wt h_{J_2}\rangle|
|\langle f_3, h_{I_3} \otimes h_{J_3} \rangle| \\
&\lesssim \| \varphi_{\calD^m, b}(f_1) \|_{L^p(\R^{n+m})} \|f_2\|_{L^q(\R^{n+m})} \|f_3\|_{L^{r'}(\R^{n+m})} \\
&\lesssim \|f_1\|_{L^p(\R^{n+m})} \|f_2\|_{L^q(\R^{n+m})} \|f_3\|_{L^{r'}(\R^{n+m})}.
\end{align*}

\textbf{Cases 3.-7.}
We operate exactly as above but use Lemma \ref{lem:case3}, or other completely analogous identities (which
are always obtained using the protocol stated in Section \ref{sec:marprod}).
\end{proof}

\section{Quasi--Banach estimates for $\E_{\omega, \omega'}[b,S_{\omega, \omega'}]_1$ via restricted weak type}\label{sec:quasiviarest}
In this section we will prove:
\begin{thm}\label{thm:com1ofmodelQuasiBanach}
Let $\|b\|_{\bmo(\R^{n+m})}  = 1$, and let $1 < p, q \le \infty$ and $1/2 < r < \infty$ satisfy $1/p+1/q = 1/r$.
Suppose $S_{\omega, \omega'} := S^{v}_{k, \mathcal{D}^n_{\omega},\mathcal{D}^m_{\omega'}}$
is a bilinear bi-parameter shift of complexity $(k,v)$ defined using the dyadic grids $\mathcal{D}^n_{\omega}$ and $\mathcal{D}^m_{\omega'}$.
Then we have
$$
 \|\mathbb{E}_{\omega, \omega'}[b,S_{\omega, \omega'}]_1(f_1, f_2)\|_{L^r(\R^{n+m})} 
 \lesssim (1+\max(k_i, v_i)) \|f_1\|_{L^p(\R^{n+m})} \|f_2\|_{L^q(\R^{n+m})}.
$$
\end{thm}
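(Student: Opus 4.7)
The plan is to obtain the required quasi--Banach bound via a multilinear restricted weak type argument, followed by multilinear Marcinkiewicz interpolation to cover all exponents in the stated range. Concretely, for given measurable sets $E_1, E_2, F \subset \R^{n+m}$ of finite measure I would produce a major subset $F' \subset F$, independent of $\omega, \omega'$, so that for all $|f_i| \le 1_{E_i}$ and $|g| \le 1_{F'}$ one has
\begin{equation*}
\bigl|\bigl\langle \mathbb{E}_{\omega,\omega'}[b,S_{\omega,\omega'}]_1(f_1,f_2), g \bigr\rangle\bigr|
\lesssim (1+\max(k_i,v_i))\, |E_1|^{1/p} |E_2|^{1/q} |F|^{1/r'}.
\end{equation*}
The set $F'$ is obtained by excising the points where the continuum strong maximal functions $M1_{E_i}$, the adapted maximal functions $M_b 1_{E_i}$, and the auxiliary operators $\varphi_{\calD^m, b}(1_{E_i}), \varphi_{\calD^n, b}(1_{E_i})$ are too large relative to the natural densities; the $L^p$ bounds of Lemma \ref{lem:bmaxbounds} guarantee $|F \setminus F'| \le |F|/2$. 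Crucially, all these maximal operators are defined via continuum rectangles (or a fixed standard grid), so the exceptional set is independent of the random grids $\calD^n_\omega, \calD^m_{\omega'}$, which is what permits clean interaction with the outer expectation.

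Having brought $\mathbb{E}_{\omega,\omega'}$ outside the pairing by Fubini, for each realization I would expand $\langle [b,S_{\omega,\omega'}]_1(f_1,f_2), g\rangle$ using the martingale identities of Section \ref{sec:marprod} via Lemmas \ref{lem:case1}--\ref{lem:case3} and their symmetric analogues, choosing the appropriate identity according to the positions of cancellative and non-cancellative Haar functions in the shift (the seven symmetries enumerated in the proof of Theorem \ref{thm:com1ofmodelBanach}). This produces three kinds of contributions: (a) pairings involving $S_{\omega,\omega'}$ composed with a paraproduct $A_i(b, \cdot)$ or $a_i^j(b, \cdot)$ from Section \ref{sec:marprod} applied to $f_1$ (or with $g$ replaced by a paraproduct of itself); (b) maximal-function remainders controlled by Lemma \ref{lem:maximalbound} in terms of $\varphi_{\calD^m, b}(f_i)$ or $M_b f_i$; and (c) the difference-of-averages term $[\langle b \rangle_{Q \times R} - \langle b \rangle_{I \times J}]$ multiplying the raw shift coefficients. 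Term (c) produces the polynomial factor $\max(k_i, v_i)$ directly from Lemma \ref{lem:bmobound}, after which one is left with $\mathbb{E}_{\omega,\omega'} S_{\omega,\omega'}$ -- an average of shifts which, by the representation theorem of \cite{LMV}, enjoys the quasi--Banach bound of a bilinear bi-parameter singular integral, and so admits the desired restricted weak type estimate. Terms (b) are absorbed into the construction of $F'$: on $F'$ the relevant maximal functions are pointwise controlled by the $E_i$-densities, reducing the estimate once more to one for an average of shifts.

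The main obstacle is the composition terms (a). Since both the paraproducts and $S_{\omega,\omega'}$ depend on the same random grids, the expectation does not split, and the composition of a shift with a paraproduct is not itself of shift form. The plan is to exploit the sharp localization of the bi-parameter paraproducts: each $A_i(b, f_1)$ or $a_i^j(b, f_1)$ has rectangular support structure compatible with the shift coefficients, and can be pointwise dominated on $F'$ by maximal-function proxies such as $M_b 1_{E_1}$ or $M_s 1_{E_1}$. Inserting these pointwise bounds, the composition $S_{\omega,\omega'}(A_i(b,f_1),f_2)$ reduces to an expression of shift type with inputs whose $L^p$/$L^q$ norms are controlled on $F'$ by the construction of the exceptional set. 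The outer expectation $\mathbb{E}_{\omega,\omega'}$ then restores singular-integral character, so the quasi--Banach bound from \cite{LMV} closes the estimate, with polynomial dependence on $\max(k_i, v_i)$ inherited from Lemma \ref{lem:bmobound} and the normalization of shift coefficients. This localization must be preserved for the iteration used in Section \ref{sec:iterated}, which is why the entire protocol of Section \ref{sec:marprod} is written so that all remainders, maximal terms and paraproduct outputs are controlled uniformly in the random data.
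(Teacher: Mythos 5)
Your overall architecture (restricted weak type for an $\omega$-independent major subset, Fubini to put $\E_{\omega,\omega'}$ inside a pairing with a fixed dualizing function, expansion via Lemmas \ref{lem:case1}--\ref{lem:case3}, interpolation and duality at the end) matches the paper's, but the core of the argument --- the treatment of your composition terms (a) --- has a genuine gap. You claim that $A_i(b,f_1)$ and $a_i^j(b,f_1)$ ``can be pointwise dominated on $F'$ by maximal-function proxies such as $M_b 1_{E_1}$ or $M_s 1_{E_1}$.'' This is false: these are paraproduct (singular) operators, bounded on $L^p(w)$ but not pointwise controlled by maximal functions, and moreover they are built on the \emph{random} grids $\calD^n_\omega,\calD^m_{\omega'}$, so no fixed-grid or continuum maximal excision of $F$ can control them uniformly in $(\omega,\omega')$. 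The paper's mechanism is different and is exactly what your proposal is missing: one forms the deterministic square functions of Lemma \ref{lem:DetSquareShift}, in which the expectation over $\omega,\omega'$ sits \emph{inside} the square sum indexed by the standard grid $\calD_0$ and the family $U$ is taken to be $\{A_{i,\omega,\omega'}(b,\cdot)\}$, $\{a^j_{i,\omega}(b,\cdot)\}$ or $\{\varphi_{\omega,b}\}$; their boundedness comes from the weighted ($A_2$) estimates \eqref{eq:wforlargeA1}, \eqref{eq:wforlargeA2}, \eqref{eq:wforsmallA} plus extrapolation. These modified square functions are then inserted directly into a level-set stopping time $\Omega_u$, $\wt\Omega_u$, $\calR_u=\widehat\calR_u\setminus\widehat\calR_{u-1}$ (when the paraproduct falls on $f_1$, the corresponding square function goes into the \emph{definition} of $\Omega_u$; when it falls on $f_3$, one uses the localisation property \eqref{eq:localisation} to replace $f_3$ by $1_{\wt\Omega_u}f_3$ and an $L^2$ bound of the square function), producing the summable factor $2^{-u(1-r)}$ and the weight $|E|^{1/r'}$. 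Your single-level excision contains neither this $u$-decomposition nor any substitute for it, and without it the estimate does not close for $r<1$.

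Two further points. First, for your term (c) you argue that after extracting $\max(k_i,v_i)$ by Lemma \ref{lem:bmobound} one is ``left with $\E_{\omega,\omega'}S_{\omega,\omega'}$, which by the representation theorem of \cite{LMV} enjoys the quasi--Banach bound of a bilinear bi-parameter singular integral.'' The representation theorem runs in the opposite direction (from a $T1$-type SIO to an average of model operators); an average of shifts, let alone of the $b$-modified shifts arising here, is not a priori a Calder\'on--Zygmund operator, and the difficulty of passing averages through $L^r$-quasinorms with $r<1$ is precisely why the restricted weak type argument is needed. This term can be salvaged either by the same square-function stopping-time estimate (the paper's route, where it is the easiest case) or by the uniform weighted bounds for shifts from \cite{LMV} plus extrapolation, but not by citing the representation theorem. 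Second, your fully restricted setup ($|f_i|\le 1_{E_i}$ in all slots) is workable in principle, but note that the paper keeps $f_1\in L^p$, $f_2\in L^q$ general and restricts only the dual function, because the stopping sets are defined through the product $S^2_{v_1}f_1\,S^{k_2,v_2}f_2$ at heights $2^{-u}|E|^{-1/r}$; some such function-dependent stopping construction has to appear in any version of the proof.
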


The case when $r>1$ in Theorem \ref{thm:com1ofmodelQuasiBanach} is easy, since we already know 
the Banach range boundedness of commutators of shifts. 
Indeed, if $f_1 \in L^p(\R^{n+m})$, $f_2 \in L^q(\R^{n+m})$ and $f_3 \in L^{r'}(\R^{n+m})$, then
\begin{equation*}
\begin{split}
| \langle \mathbb{E}_{\omega, \omega'}[b,S_{\omega, \omega'}]_1(f_1, f_2),f_3 \rangle |
& \le \E_{\omega,\omega'} | \langle [b,S_{\omega, \omega'}]_1(f_1, f_2),f_3 \rangle | \\
&\lesssim (1+\max(k_i, v_i)) \|f_1\|_{L^p(\R^{n+m})} \|f_2\|_{L^q(\R^{n+m})}
\|f_3\|_{L^{r'}(\R^{n+m})}. 
\end{split}
\end{equation*}

The main task is to prove a restricted weak type estimate, which combined with the Banach range boundedness
implies Theorem \ref{thm:com1ofmodelQuasiBanach} via interpolation. We will show that given $p,q \in (1, \infty)$ and $r \in (1/2,1)$ satisfying $1/p+1/q=1/r$, 
$f_1 \in L^p(\R^{n+m})$, $f_2 \in L^q(\R^{n+m})$ and a set $E \subset \R^{n+m}$ with $0 < |E| < \infty$,
there exists a subset $E' \subset E$ such that $|E'| \ge |E|/2$ and such that for all functions $f_3$
satisfying $|f_3| \le 1_{E'}$ there holds
\begin{equation}\label{eq:ResAveShift}
\begin{split}
| \langle \mathbb{E}_{\omega, \omega'}&[b,S_{\omega, \omega'}]_1(f_1, f_2),f_3 \rangle | \\
&\lesssim (1+\max(k_i, v_i)) \|f_1\|_{L^p(\R^{n+m})} \|f_2\|_{L^q(\R^{n+m})}|E|^{1/r'}.
\end{split}
\end{equation}

To prove \eqref{eq:ResAveShift} we consider the different types of shifts separately
and split the commutators using the identities from Section \ref{sec:KeyIdentities}.
We choose one particular term and show the proof with it in all detail. This
pretty well describes the steps needed for the other terms also, and we shall comment on this in the end.

We will denote the coefficients related to the shift $S_{\omega, \omega'}$ by $a^{\omega,\omega'}_{K,V,(I_i),(J_i)}$.
The shifts we consider here are of the form
\begin{equation}\label{eq:ResAveShiftMixed}
\begin{split}
\langle S_{\omega, \omega'}(f_1,f_2),f_3 \rangle
= \sum_{\substack{K \in \calD^n_\omega \\ V \in \calD^m_{\omega'}}} 
&\sum_{\substack{I_1, I_2, I_3 \in \calD^n_\omega \\ I_i^{(k_i)} = K}} 
\sum_{\substack{J_1, J_2, J_3 \in \calD^m_{\omega'} \\ J_i^{(v_i)}  = V}} a^{\omega,\omega'}_{K, V, (I_i), (J_j)} \\
&\times \langle f_1,  h_{I_1}^0 \otimes   h_{J_1}\rangle \langle f_2,  h_{I_2} \otimes  h_{J_2}\rangle  
\langle f_3,   h_{I_3} \otimes  h_{J_3}^0 \rangle.
\end{split}
\end{equation}
To the commutator of this we apply Lemma \ref{lem:case3}. One of the resulting terms, which is the term that we handle in detail,
is considered in the next lemma.

\begin{lem}\label{lem:ResAveShiftEx}
Let $\| b \|_{\bmo(\R^{n+m})}=1$ and let $p,q \in (1, \infty)$ and $r \in (1/2,1)$ satisfy $1/p+1/q=1/r$. 
Suppose $f_1 \in L^p(\R^{n+m})$, $f_2 \in L^q(\R^{n+m})$ and  $E \subset \R^{n+m}$ with $0 < |E| < \infty$.
Then there exists a subset $E' \subset E$ with $|E'| \ge \frac{99}{100} |E|$  so that for all functions $f_3$
satisfying $|f_3| \le 1_{E'}$ there holds
\begin{equation}\label{eq:ResAveShiftExEst}
\begin{split}
\Big|\E_{\omega,\omega'}\sum_{\substack{K \in \calD^n_0 \\ V \in \calD^m_0}} 
 \Lambda^{\omega,\omega'}_{K \times V} (f_1,f_2,f_3 )\Big| 
 \lesssim \|f_1\|_{L^p(\R^{n+m})} \|f_2\|_{L^q(\R^{n+m})}|E|^{1/r'},
\end{split}
\end{equation}
where
\begin{equation*}
\begin{split}
\Lambda^{\omega,\omega'}_{K \times V} (f_1,f_2,f_3 )
= &\sum_{\substack{I_1, I_2, I_3 \in \calD^n_\omega \\ I_i^{(k_i)} = K+\omega}} 
\sum_{\substack{J_1, J_2, J_3 \in \calD^m_{\omega'} \\ J_i^{(v_i)}  = V+\omega'}} 
a^{\omega,\omega'}_{K + \omega, V+\omega', (I_i), (J_j)}  \\
&\times  \langle f_1,  h_{I_1}^0 \otimes   h_{J_1}\rangle \langle f_2,  h_{I_2} \otimes  h_{J_2}\rangle  
\bla (\langle b \rangle_{I_3,1}-\langle b \rangle_{I_3 \times J_3}) \langle  f_3, h_{I_3} \rangle_1,    h_{J_3}^0 \bra.
\end{split}
\end{equation*}
\end{lem}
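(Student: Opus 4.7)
The plan is to reduce the estimate to a restricted weak type bound for a bilinear bi-parameter shift-like object via Lemma \ref{lem:maximalbound}, and then carry out a sparse-domination argument with a suitably enlarged exceptional set. First, apply the symmetric form of Lemma \ref{lem:maximalbound} (with the roles of $\R^n$ and $\R^m$ exchanged) to bound pointwise
\begin{equation*}
\big|\bla (\langle b\rangle_{I_3,1} - \langle b\rangle_{I_3 \times J_3})\langle f_3, h_{I_3}\rangle_1, h_{J_3}^0\bra\big| \le \big\langle \varphi_{\calD^n_\omega, b}(f_3), h_{I_3}\otimes h_{J_3}^0\big\rangle.
\end{equation*}
Writing $g_\omega := \varphi_{\calD^n_\omega, b}(f_3)$ and moving the expectation outside the absolute values, the quantity on the left of \eqref{eq:ResAveShiftExEst} is dominated by $\E_{\omega, \omega'}|\langle \wt S_{\omega, \omega'}(f_1, f_2), g_\omega\rangle|$, where, for each realization of $(\omega, \omega')$, $\wt S_{\omega, \omega'}$ is a bilinear bi-parameter shift of complexity $(k, v)$ with non-negative coefficients $|a^{\omega,\omega'}|$ and the same Haar positions as in \eqref{eq:ResAveShiftMixed}, but now paired in the third slot against the tester $g_\omega$ via $h_{I_3}\otimes h_{J_3}^0$.

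Second, we aim for a restricted weak type bound for $|\langle \wt S_{\omega, \omega'}(f_1, f_2), g_\omega\rangle|$, uniformly in $(\omega, \omega')$. Fix $s > 1$ close to $1$ (with $s < p, q$) and $s_0 \in (1, \infty)$, and define the $(\omega, \omega')$-dependent exceptional set
\begin{equation*}
\Omega := \{M_s^{\calD_{\omega,\omega'}}f_1 > C|E|^{-1/p}\|f_1\|_{L^p}\} \cup \{M_s^{\calD_{\omega,\omega'}}f_2 > C|E|^{-1/q}\|f_2\|_{L^q}\} \cup \{M_s^{\calD_{\omega,\omega'}}g_\omega > C|E|^{-1/s_0}\|g_\omega\|_{L^{s_0}}\}.
\end{equation*}
By Lemma \ref{lem:bmaxbounds}, $\varphi_{\calD^n_\omega, b}$ is bounded on $L^{s_0}$ with norm independent of $\omega$, so $\|g_\omega\|_{L^{s_0}}\lesssim \|f_3\|_{L^{s_0}}\le |E|^{1/s_0}$. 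The weak $(s,s)$-type of $M_s^{\calD_{\omega,\omega'}}$ then yields $|\Omega|\le |E|/100$ for $C$ large enough; set $E' := E \setminus \Omega$. Using the size estimate on $|a^{\omega, \omega'}|$ together with Cauchy--Schwarz in the three Haar indices in each of $\R^n$ and $\R^m$, we dominate $|\langle \wt S_{\omega, \omega'}(f_1, f_2), g_\omega\rangle|$, up to a polynomial factor in $1 + \max(k_i, v_j)$, by a positive sparse-type bilinear form over rectangles $R \in \calD_{\omega, \omega'}$. A standard splitting according to whether $R \subset \Omega$ or $R \not\subset \Omega$, using the maximal thresholds defining $\Omega$ on the latter and a careful accounting of the $g_\omega$-contribution on the former via its $L^{s_0}$-control and the sparseness of $\Omega$, then produces the bound $\|f_1\|_{L^p}\|f_2\|_{L^q}|E|^{1/r'}$ with the correct complexity dependence.

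The main obstacle is that $g_\omega$ is \emph{not} supported in $E'$ even though $f_3$ is, so the usual restricted weak type trick (by which contributions from $R \subset \Omega$ vanish thanks to $\supp f_3 \subset E'$) fails when $g_\omega$ replaces $f_3$ in the third slot. This is the reason we must include $g_\omega$ into $\Omega$ via its $L^{s_0}$-control, and the maneuver is legitimate precisely because the $L^{s_0}$-boundedness of $\varphi_{\calD^n_\omega, b}$ furnished by Lemma \ref{lem:bmaxbounds} is uniform in the random grid $\calD^n_\omega$. A secondary technicality is the presence of the non-cancellative Haar functions $h_{I_1}^0$ and $h_{J_3}^0$, which convert Haar coefficients of $f_1$ and $g_\omega$ into averages over $I_1$ and $J_3$; this is why strong (rather than just martingale-type) dyadic maximal functions appear in the exceptional set.
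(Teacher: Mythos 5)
Your proposal breaks down at the very first structural step of a restricted weak type argument: the construction of $E'$. In the lemma, $E'$ must be a \emph{single} subset of $E$, chosen from $f_1$, $f_2$ and $E$ alone, which then works simultaneously for the whole expectation $\E_{\omega,\omega'}$ and for \emph{every} $f_3$ with $|f_3|\le 1_{E'}$. Your exceptional set $\Omega$ fails this on two counts. First, it is built from maximal functions over the random grids $\calD_{\omega,\omega'}$, so $E'=E\setminus\Omega$ would depend on $(\omega,\omega')$; you cannot remove a different set for each realization of the grids when the expectation sits inside the pairing. The paper's mechanism for avoiding exactly this is to build the level sets $\Omega_u=\{S^2_{v_1}f_1\,S^{k_2,v_2}f_2>C_02^{-u}|E|^{-1/r}\}$ out of the \emph{deterministic} square functions of Lemma \ref{lem:DetSquareShift}, which already contain the average over $\omega,\omega'$. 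Second, your $\Omega$ contains the set $\{M_s g_\omega>C|E|^{-1/s_0}\|g_\omega\|_{L^{s_0}}\}$ with $g_\omega=\varphi_{\calD^n_\omega,b}(f_3)$, i.e.\ it is built from $f_3$ itself, while $f_3$ is only constrained \emph{after} $E'$ has been fixed; this is circular and is precisely the move that a restricted weak type argument forbids. You correctly identified the obstacle (that $g_\omega$ is not supported in $E'$), but the legitimate cure is the one in the paper: keep the pairing in its original form, use the localisation \eqref{eq:localisation} to see that rectangles $K\times V\in\widehat\calR_0$ contribute nothing and to replace $f_3$ by $1_{\wt\Omega_u}f_3$ on $\calR_u$, and only \emph{then} invoke Lemma \ref{lem:maximalbound}, so that the maximal operator acts on $1_{\wt\Omega_u}f_3$, whose $L^2$ norm is controlled by $|\wt\Omega_u|^{1/2}\lesssim (2^{ur}|E|)^{1/2}$ via Chebyshev; summing $2^{-u(1-r)}$ uses $r<1$.

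A further gap is the claimed domination of the bi-parameter shift by ``a positive sparse-type bilinear form over rectangles.'' No such bi-parameter sparse domination is available (the introduction of the paper explicitly points to the lack of a satisfactory sparse theory in the multi-parameter setting as the reason for working through the representation and square functions), and your sketch of ``splitting according to whether $R\subset\Omega$ or $R\not\subset\Omega$'' does not substitute for it. The paper replaces this step by the chain of estimates preceding the proof of Lemma \ref{lem:ResAveShiftEx}: the size normalisation of the coefficients, Cauchy--Schwarz in $\E_{\omega,\omega'}$, and the weighted/unweighted bounds for the square functions $S^2_{v_1}$, $S^{k_2,v_2}$, $S^1_{k_3,\varphi^1_b}$, integrated over $\wt\Omega_u\setminus\Omega_{u-1}$ where the product $S^2_{v_1}f_1\,S^{k_2,v_2}f_2$ is pointwise small. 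As written, your argument does not yield the lemma.
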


For the proof of Lemma \ref{lem:ResAveShiftEx} we record  the boundedness of certain deterministic square functions.
Let $i,j \in \Z$,  $i, j \ge 0$. Suppose that we have a family of operators $U=\{U_ {\omega,\omega'}\}_{\omega,\omega'}$ such that for all $\omega,\omega'$ there holds
$$
\| U_{\omega,\omega'} f \|_{L^2(w)} \le C([w]_{A_2(\R^n \times \R^m)})\| f \|_{L^2(w)}, \quad f \in L^2(w),
$$
for all $w \in A_2(\R^n \times \R^m)$. Recall that $\calD_0 = \calD^n_0 \times \calD^m_0$, where $\calD^n_0$ and $\calD^m_0$ are
the standard dyadic grids of $\R^n$ and $\R^m$ respectively.
Define
$$
S_{U}^{i,j} f
=\Big(\sum_{K \times V \in \calD_0} \E_{\omega,\omega'} (M \Delta^{i,j}_{(K+\omega) \times (V+\omega')} U_{\omega,\omega'}f)^2 \Big)^{1/2}.
$$
Given a similar $U=\{U_ {\omega}\}_{\omega}$ we set
$$
S^1_{i,U}=\Big(\sum_{K \in \calD_0^n} \E_{\omega} (M \Delta^{1}_{K+\omega,i} U_{\omega} f)^2 \Big)^{1/2}
$$
and given $U=\{U_ {\omega'}\}_{\omega'}$ we set
$$
S^2_{j,U}=\Big(\sum_{V \in \calD_0^m} \E_{\omega'} (M \Delta^{2}_{V+\omega',j} U_{\omega'} f)^2 \Big)^{1/2}.
$$
We write $S^{i,j}$, $S^1_{i}$ and $S^2_j$ if there is no $U$ present.

\begin{lem}\label{lem:DetSquareShift}
For all $p \in (1, \infty)$ and $w \in A_p(\R^n\times\R^m)$ there holds
$$
\| S^{i,j}_Uf\|_{L^p(w)}+\| S^{1}_{i,U}f\|_{L^p(w)}+\| S^{2}_{j,U}f\|_{L^p(w)}
\le C([w]_{A_p(\R^n\times\R^m)})  \| f\|_{L^p(w)}.
$$
\end{lem}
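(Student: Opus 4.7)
The plan is to first establish the bound for $p=2$ and all $w \in A_2(\R^n \times \R^m)$, and then invoke Rubio de Francia extrapolation for bi-parameter $A_p$ weights to cover the full range $p \in (1,\infty)$. I will give the argument in detail for $S_U^{i,j}$; the arguments for $S^1_{i,U}$ and $S^2_{j,U}$ are entirely parallel, with the bi-parameter square function estimate in Lemma \ref{lem:mest} replaced by its one-parameter martingale block version.

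For the $L^2(w)$ bound on $S_U^{i,j}$, I unfold the outer $L^2(w)$ norm and swap the order of integration and summation (everything is positive) to obtain $\|S_U^{i,j} f\|_{L^2(w)}^2 = \sum_{K\times V \in \calD_0}\E_{\omega,\omega'}\|M \Delta^{i,j}_{(K+\omega)\times(V+\omega')} U_{\omega,\omega'} f\|_{L^2(w)}^2$. I then use the $L^2(w)$ boundedness of the strong maximal function (a standard consequence of $w \in A_2(\R^n\times\R^m)$ combined with \eqref{eq:prodap}) to control each factor $M\Delta^{i,j}_{(K+\omega)\times(V+\omega')} U_{\omega,\omega'} f$ by $\Delta^{i,j}_{(K+\omega)\times(V+\omega')} U_{\omega,\omega'} f$ in the norm, at the price of a constant depending on $[w]_{A_2(\R^n\times\R^m)}$.

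Next, for each fixed $\omega, \omega'$ I view the collection $\{\Delta^{i,j}_{\tilde K \times \tilde V}\}_{\tilde K \in \calD_\omega^n,\, \tilde V \in \calD_{\omega'}^m}$ as the bi-parameter martingale block decomposition of $U_{\omega,\omega'} f$ at scales $(2^{-i}, 2^{-j})$. Since these blocks form a bi-parameter martingale difference sequence for the coarsened filtration $\{E_{\tilde K}^1 E_{\tilde V}^2\}$, Lemma \ref{lem:mest} applied at the coarsened scale yields $\sum_{\tilde K, \tilde V}\|\Delta^{i,j}_{\tilde K \times \tilde V} U_{\omega,\omega'} f\|_{L^2(w)}^2 \lesssim \|U_{\omega,\omega'} f\|_{L^2(w)}^2$. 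The assumed $L^2(w)$ boundedness of $U_{\omega,\omega'}$ (uniform in $\omega, \omega'$, with $A_2$-polynomial constant) then bounds this by $\|f\|_{L^2(w)}^2$. Since $\E_{\omega,\omega'}\|f\|_{L^2(w)}^2 = \|f\|_{L^2(w)}^2$, this gives $\|S_U^{i,j} f\|_{L^2(w)} \le C([w]_{A_2})\|f\|_{L^2(w)}$, and Rubio de Francia extrapolation in the bi-parameter setting delivers the claimed $L^p(w)$ bound for all $p \in (1,\infty)$ and $w \in A_p(\R^n\times\R^m)$.

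The main subtle point is the square function estimate for martingale \emph{blocks} rather than individual $\Delta_{I\times J}$: this is exactly where one uses that the blocks themselves form a martingale difference system for the filtration obtained by coarsening $\calD_\omega^n \times \calD_{\omega'}^m$ to scale $(2^{-i}, 2^{-j})$, so Lemma \ref{lem:mest} applies verbatim to this coarsened system. Everything else is bookkeeping: Fubini to swap $\sum$ and $\E$ for positive integrands, boundedness of the strong maximal function, and Rubio de Francia extrapolation. The one-parameter analogs $S^1_{i,U}$, $S^2_{j,U}$ are handled identically, using that $\|b(\cdot, x_2)\|$ and $\|b(x_1, \cdot)\|$ are $A_p$ uniformly via \eqref{eq:prodap}, so the one-parameter martingale block Littlewood--Paley inequality applies in each slice and integrates up.
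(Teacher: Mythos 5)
Your proof is correct and follows exactly the paper's argument: unfold the $L^2(w)$ norm, use the weighted boundedness of the strong maximal function, then the weighted rectangular square function bound, then the uniform $L^2(w)$ bound on $U_{\omega,\omega'}$, and finally extrapolate. The only small wobble is the ``coarsened filtration'' framing of the block square function estimate: the operators $\Delta^{i,j}_{\tilde K \times \tilde V}$ do not arise from coarsening the filtration to scales $(2^{-i}, 2^{-j})$ (that would group $i$ consecutive martingale differences together and produce a different decomposition). Rather, since $\Delta^{i,j}_{\tilde K\times\tilde V}g$ is supported on $\tilde K\times\tilde V$ and the pieces at a fixed pair of scales are disjointly supported, one has the pointwise identity $\sum_{\tilde K, \tilde V}|\Delta^{i,j}_{\tilde K\times\tilde V}g|^2 = \sum_{I,J}|\Delta_{I\times J}g|^2$ (via the bijection $I^{(i)}=\tilde K$, $J^{(j)}=\tilde V$), so the block square function literally equals the ordinary rectangular square function and Lemma~\ref{lem:mest} applies directly --- which is what the paper means by ``the usual rectangular dyadic square function.'' The conclusion you draw from this step is correct either way, so the proof stands.
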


\begin{proof}
We show the estimate for $S^{i,j}_U$. The other two are very similar. 
Suppose $w \in A_2(\R^n \times \R^m)$ and $f \in L^2(w)$. Then
\begin{align*}
\| S^{i,j}_Uf\|_{L^2(w)}^2
&= \E_{\omega,\omega'} \sum_{K \times V \in \calD_0} \| M \Delta^{i,j}_{(K+\omega) \times (V+\omega')} U_{\omega,\omega'}f \|_{L^2(w)}^2 \\
&\le C([w]_{A_2(\R^n \times \R^m)}) \E_{\omega,\omega'} \|  U_{\omega,\omega'}f \|_{L^2(w)}^2
\le C([w]_{A_2(\R^n \times \R^m)}) \| f \|_{L^2(w)}^2,
\end{align*}
where in the second step we used the weighted boundedness of the strong maximal function and the usual rectangular dyadic square function.
The claim for $p \in (1,\infty)$ follows from extrapolation.
\end{proof}
The following estimates contain the standard estimates for shifts, and are the reason why various square functions arise naturally.
For $R = K \times V \in \calD_0$ we denote $R_{\omega, \omega'} = (K+\omega) \times (V+\omega') \subset 3K \times 3V = 3R$. Then 
using the normalisation of the constants $a^{\omega,\omega'}_{K + \omega, V+\omega', (I_i), (J_j)}$ and adding martingale differences
using cancellative Haar functions we have 
\begin{align*}
&\sum_{\substack{I_1, I_2, I_3 \in \calD^n_\omega \\ I_i^{(k_i)} = K+\omega}} 
\sum_{\substack{J_1, J_2, J_3 \in \calD^m_{\omega'} \\ J_i^{(v_i)}  = V+\omega'}} 
|a^{\omega,\omega'}_{K + \omega, V+\omega', (I_i), (J_j)}| \\
 & \hspace{2cm} \times |\langle U_{1, \omega'} f_1,  h_{I_1}^0 \otimes   h_{J_1}\rangle| |\langle U_{2, \omega, \omega'} f_2,  h_{I_2} \otimes  h_{J_2}\rangle|  
|\langle U_{3, \omega} f_3, h_{I_3} \otimes h_{J_3}^0 \rangle| \\
&\le \langle |\Delta^{2}_{V+\omega',v_1}U_{1, \omega'} f_1 |\rangle_{R_{\omega, \omega'}}
\langle |\Delta^{k_2,v_2}_{K+\omega, V + \omega'}U_{2, \omega, \omega'} f_2 |\rangle_{R_{\omega, \omega'}}\langle |\Delta^{1}_{K+\omega,k_3}U_{3, \omega} f_3 |\rangle_{R_{\omega, \omega'}} |R| \\
&\lesssim \int \langle |\Delta^{2}_{V+\omega',v_1}U_{1, \omega'} f_1 |\rangle_{3R}
\langle |\Delta^{k_2,v_2}_{K+\omega, V + \omega'}U_{2, \omega, \omega'} f_2 |\rangle_{3R}\langle |\Delta^{1}_{K+\omega,k_3}U_{3, \omega} f_3 |\rangle_{3R}1_R \\
&\le \int M \Delta^{2}_{V+\omega',v_1}U_{1, \omega'} f_1 \cdot M\Delta^{k_2,v_2}_{K+\omega, V + \omega'}U_{2, \omega, \omega'} f_2 \cdot M \Delta^{1}_{K+\omega,k_3}U_{3, \omega} f_3,
\end{align*}
and so furthermore
\begin{align*}
&\E_{\omega, \omega'} \sum_{\substack{K \in \calD^n_0 \\ V \in \calD^m_0}}\sum_{\substack{I_1, I_2, I_3 \in \calD^n_\omega \\ I_i^{(k_i)} = K+\omega}} 
\sum_{\substack{J_1, J_2, J_3 \in \calD^m_{\omega'} \\ J_i^{(v_i)}  = V+\omega'}} 
|a^{\omega,\omega'}_{K + \omega, V+\omega', (I_i), (J_j)}| \\
 & \hspace{2cm} \times |\langle U_{1, \omega'} f_1,  h_{I_1}^0 \otimes   h_{J_1}\rangle| |\langle U_{2, \omega, \omega'} f_2,  h_{I_2} \otimes  h_{J_2}\rangle|  
|\langle U_{3, \omega} f_3, h_{I_3} \otimes h_{J_3}^0 \rangle| \\
&\lesssim \int \sum_{\substack{K \in \calD^n_0 \\ V \in \calD^m_0}} \E_{\omega, \omega'} M \Delta^{2}_{V+\omega',v_1}U_{1, \omega'} f_1 \cdot M\Delta^{k_2,v_2}_{K+\omega, V + \omega'}U_{2, \omega, \omega'} f_2 \cdot M \Delta^{1}_{K+\omega,k_3}U_{3, \omega} f_3 \\
&\le \int \sum_{\substack{K \in \calD^n_0 \\ V \in \calD^m_0}} (\E_{\omega, \omega'} (M \Delta^{2}_{V+\omega',v_1}U_{1, \omega'} f_1)^2 (M \Delta^{1}_{K+\omega,k_3}U_{3, \omega} f_3)^2 )^{1/2} \\
& \hspace{6cm}\times(\E_{\omega, \omega'} (M\Delta^{k_2,v_2}_{K+\omega, V + \omega'}U_{2, \omega, \omega'} f_2)^2 )^{1/2} \\
&\le \int \Big( \sum_{\substack{K \in \calD^n_0 \\ V \in \calD^m_0}} \E_{\omega, \omega'} (M \Delta^{2}_{V+\omega',v_1}U_{1, \omega'} f_1)^2 (M \Delta^{1}_{K+\omega,k_3}U_{3, \omega} f_3)^2 \Big)^{1/2} \\
&\hspace{6cm} \times  \Big( \sum_{\substack{K \in \calD^n_0 \\ V \in \calD^m_0}} \E_{\omega, \omega'} (M\Delta^{k_2,v_2}_{K+\omega, V + \omega'}U_{2, \omega, \omega'} f_2)^2 \Big)^{1/2} \\
&= \int \Big( \sum_{V \in \calD^m_0} \E_{\omega'} (M \Delta^{2}_{V+\omega',v_1}U_{1, \omega'} f_1)^2 \Big)^{1/2}
\Big( \sum_{\substack{K \in \calD^n_0 \\ V \in \calD^m_0}} \E_{\omega, \omega'} (M\Delta^{k_2,v_2}_{K+\omega, V + \omega'}U_{2, \omega, \omega'} f_2)^2 \Big)^{1/2}  \\
&\hspace{6cm} \times \Big( \sum_{K \in \calD^n_0} \E_{\omega} (M \Delta^{1}_{K+\omega,k_3}U_{3, \omega} f_3)^2 \Big)^{1/2}.
\end{align*}
Estimates in the above spirit are used repeatedly below.

We are now ready to prove Lemma \ref{lem:ResAveShiftEx}.

\begin{proof}[Proof of Lemma \ref{lem:ResAveShiftEx}]
Because the square functions $S^2_{v_1}$ and $S^{k_2,v_2}$ are bounded, it is enough to assume that
$$
\| S^2_{v_1}f_1 S^{k_2,v_2}f_2 \|_{L^r}=1,
$$
and show that there exists a set $E' \subset E$ with $|E'| \ge \frac{99}{100} |E|$ so that the left hand side of \eqref{eq:ResAveShiftExEst}
is dominated by $|E|^{1/r'}$.

Define
$$
\Omega_u =\{S^2_{v_1}f_1 S^{k_2,v_2}f_2 > C_0 2^{-u}|E|^{-1/r}\}, \quad u \ge 0,
$$
and 
$$
\wt \Omega_u = \{M1_{\Omega_u}> c_1\},
$$
where $c_1>0$ is a small enough dimensional constant. Then we can
choose $C_0=C_0(c_1)$ so large that the set $E':=E \setminus \wt \Omega_0$ satisfies $|E'|  \ge \frac{99}{100} |E|$.
Then we define the collections
$$
\widehat \calR_u =\Big\{ R \in \calD_0 \colon |R \cap \Omega_u | \ge \frac{|R|}{2}\Big\},
$$
where $\calD_0 = \calD^n_0 \times \calD^m_0$.

Below we denote $(K+\omega) \times (V+ \omega') = K \times V+ (\omega, \omega')$.
Let us list some properties of the collections $\widehat \calR_u$. Fix now some function $f_3$ such that 
$|f_3| \le 1_{E'}$. 
Suppose $K \times V \in \calD_0$ is such that
$$
\E_{\omega,\omega'}  \Lambda^{\omega,\omega'}_{K \times V} (f_1,f_2,f_3)  \ne 0.
$$
From this it can be concluded that
\begin{equation*}
\begin{split}
0 < \E_{\omega,\omega'} 
&\langle |\Delta^{2}_{V+\omega',v_1}f_1 |\rangle_{K\times V+(\omega,\omega')}
\langle |\Delta^{k_2,v_2}_{K\times V+(\omega,\omega')}f_2 |\rangle_{K\times V+(\omega,\omega')} \\
& \lesssim \inf_{K \times V} S^{2}_{v_1} f_1 S^{k_2,v_2} f_2.
\end{split}
\end{equation*}
Therefore, every relevant $K \times V \in \calD_0$ that appears in the summation in \eqref{eq:ResAveShiftExEst} is contained in  $\Omega_u$ for some $u$,
and therefore belongs to $\widehat \calR_u$.
Also, if $K \times V \in \widehat \calR_u$, then for all $(\omega,\omega')$ we have
$K\times V+(\omega,\omega') \subset 3K \times 3V \subset \wt \Omega_u$. Here we used the fact that 
$c_1$ is small enough. Thus, if $K \times V \in \widehat \calR_0$, then for all $(\omega,\omega')$ we have $(K+\omega) \times (V +\omega') \cap E'=\emptyset$.
We want to say that this implies that for all $(\omega, \omega')$ we have $\Lambda^{\omega,\omega'}_{K \times V} (f_1,f_2,f_3)=0$, and
so all relevant $K \times V$ satisfy $K \times V \not \in \widehat \calR_0$. The claim in the previous sentence is based on the following localisation property
\begin{equation}\label{eq:localisation}
\bla (\langle b \rangle_{I_3,1}-\langle b \rangle_{I_3 \times J_3}) \langle  f_3, h_{I_3} \rangle_1,    h_{J_3}^0 \bra
=\bla (\langle b \rangle_{I_3,1}-\langle b \rangle_{I_3 \times J_3}) \langle  1_{I_3 \times J_3}f_3, h_{I_3} \rangle_1,    h_{J_3}^0 \bra,
\end{equation}
and the fact that $|f_3| \le 1_{E'}$. The localisation will also be used below to see that we can replace $f_3$ by $f_31_F$ for
any set $F \supset I_3 \times J_3$.

Define the collections $\calR_u= \widehat \calR_u \setminus \widehat \calR_{u-1}$, where $u \ge 1$.
We have demonstrated that every relevant $K \times V \in \calD_0$ appearing in the summation in \eqref{eq:ResAveShiftExEst}  belongs to exactly one of these collections. Therefore, we have
$$
\E_{\omega,\omega'}\sum_{\substack{K \in \calD^n_0 \\ V \in \calD^m_0}} 
 \Lambda^{\omega,\omega'}_{K \times V} (f_1,f_2,f_3 )
=\sum_{u=1}^\infty \sum_{K \times V\in \calR_u} 
\E_{\omega,\omega'} \Lambda^{\omega,\omega'}_{K \times V} (f_1,f_2,f_3 ).
$$
We now fix one $u$ and estimate the corresponding term.
Using now that $I_3 \times J_3 \subset (K+\omega) \times (V+\omega') \subset \wt \Omega_u$, since
$K \times V \in \calR_u \subset \widehat \calR_u$, we may replace $f_3$ with $1_{\wt \Omega_u}f_3$.

Let us write $\varphi_{\omega, b} = \varphi_{\calD^n_{\omega}, b}$.
Suppose $K \times V \in \calR_u$. Using Lemma \ref{lem:maximalbound} we have
\begin{equation*}
\begin{split}
| \Lambda^{\omega,\omega'}_{K \times V} (f_1,f_2,1_{\wt \Omega_u}f_3 )| 
  &\le \langle |\Delta^{2}_{V+\omega',v_1}f_1 |\rangle_{K\times V+(\omega,\omega')} 
  \langle |\Delta^{k_2,v_2}_{K\times V+(\omega,\omega')}f_2 |\rangle_{K\times V+(\omega,\omega')} \\
& \times \langle |\Delta^{1}_{K+\omega,k_3}\varphi_{\omega, b} (1_{\wt \Omega_u}f_3) |\rangle_{K\times V+(\omega,\omega')} |K\times V|.
\end{split}
\end{equation*}
Since $K \times V \in \calR_u$, we also have that $|K \times V| \lesssim \int_{\wt \Omega_u \setminus \Omega_{u-1}} 1_{K \times V}$.
Combining these there holds that
\begin{equation*}
\begin{split}
 \E_{\omega,\omega'} &| \Lambda^{\omega,\omega'}_{K \times V} (f_1,f_2,f_3 )| \\
&  \lesssim \int_{\wt \Omega_u \setminus \Omega_{u-1}} 
 \E_{\omega,\omega'} M (\Delta^2_{V+\omega',v_1} f_1) M(\Delta^{k_2,v_2}_{K \times V+(\omega,\omega')} f_2)
 M (\Delta^1_{K+\omega,k_3} \varphi_{\omega, b} (1_{\wt \Omega_u}f_3)).
\end{split}
\end{equation*}
Recalling that $\E_{\omega,\omega'}= \E_\omega \E_{\omega'}$, we notice that the last integrand is pointwise dominated by
$$
\big(\E_{\omega'} (M \Delta^2_{V+\omega',v_1} f_1)^2\big)^{\frac{1}{2}}
\big(\E_{\omega,\omega'} (M\Delta^{k_2,v_2}_{K \times V+(\omega,\omega')} f_2)^2\big)^{\frac{1}{2}}
\big(\E_{\omega} (M \Delta^1_{K+\omega, k_3} \varphi_{\omega, b}(1_{\wt \Omega_u} f_3)^2\big)^{\frac{1}{2}}.
$$
Using this, we finally have that
\begin{equation}\label{eq:SquaresAppear}
\sum_{K \times V\in \calR_u} \E_{\omega,\omega'}
| \Lambda^{\omega,\omega'}_{K \times V} (f_1,f_2,f_3 )|
\lesssim \int_{\wt \Omega_u \setminus \Omega_{u-1}}
S^2_{v_1} f_1 S^{k_2,v_2} f_2 S^1_{k_3, \varphi_b^1} (1_{\wt \Omega_u}f_3),
\end{equation}
where $S^1_{k_3, \varphi_b^1}$ is the square function formed with the family $\varphi_b^1 := \{\varphi_{\omega, b}\}_{\omega}$ 
as in Lemma \ref{lem:DetSquareShift}.

If $x  \not \in \Omega_{u-1}$, then by definition $S^2_{v_1} f_1(x) S^{k_2,v_2} f_2(x) \lesssim 2^{-u}|E|^{-1/r}$.
Thus, the right hand side of \eqref{eq:SquaresAppear} is dominated by
\begin{equation*}
2^{-u}|E|^{-1/r} | \wt \Omega_{u} |^{1/2}
\| S^1_{k_3, \varphi_b^1} (1_{\wt \Omega_u}f_3)\|_{L^2}
\lesssim 2^{-u(1-r)}|E|^{1-1/r}.
\end{equation*}
The boundedness of $S^1_{k_3, \varphi_b^1}$ is based on Lemma \ref{lem:bmaxbounds} and Lemma \ref{lem:DetSquareShift}.
The last estimate can be summed over $u$, since $r < 1$. This concludes the proof.
\end{proof}

Based on the proof of Lemma \ref{lem:ResAveShiftEx}, 
we shall now comment on the other terms that arise when we apply Lemma \ref{lem:case3}
to the commutator of a shift of the form \eqref{eq:ResAveShiftMixed}. 
First, we consider the variant of Lemma \ref{lem:ResAveShiftEx},
where in the definition of $\Lambda^{\omega,\omega'}_{K \times V}(f_1,f_2,f_3)$  we have replaced the pairing
$\bla (\langle b \rangle_{I_3,1}-\langle b \rangle_{I_3 \times J_3}) \langle  f_3, h_{I_3} \rangle_1,    h_{J_3}^0 \bra$
with $\langle a^1_{i,\omega}(b,f_3), h_{I_3} \otimes h_{J_3}^0 \rangle$, where $i=1,2$.
In this case we construct the sets $\Omega_u$, $\wt \Omega_u$ and the collections $ \calR_u$ precisely as
in Lemma \ref{lem:ResAveShiftEx}. Again $E'=E \setminus \wt \Omega_0$, and we assume that
$|f_3| \le 1_{E'}$.
 From the definition of the operators $a^1_{i,\omega}$ one sees that 
if $I_3 \times J_3 \in \calD_{\omega}^n \times \calD^m_{\omega'}$, we have a similar localisation property as in
\eqref{eq:localisation}, namely
$$
\langle a^1_{i,\omega}(b,f_3), h_{I_3} \otimes h_{J_3}^0 \rangle
=\langle a^1_{i,\omega}(b,1_{I_3 \times J_3}f_3), h_{I_3} \otimes h_{J_3}^0 \rangle.
$$

We can again organise the sum over $K \times V \in \calD_0$ as
\begin{equation*}
\E_{\omega,\omega'}\sum_{K \times V \in \calD_0} \Lambda^{\omega,\omega'}_{K \times V}(f_1,f_2,f_3)
=\sum_{u=1}^\infty \sum_{K \times V \in \calR_u} 
\E_{\omega,\omega'}\Lambda^{\omega,\omega'}_{K \times V}(f_1,f_2,f_3).
\end{equation*}
Let $a^1_{i,b}$ denote the family of operators $\{a^1_{i,\omega}(b,\cdot)\}_{\omega}$. For a fixed $u$, we have corresponding
to \eqref{eq:SquaresAppear} that
\begin{equation*}
\sum_{K \times V\in \calR_u} \E_{\omega,\omega'}
| \Lambda^{\omega,\omega'}_{K \times V} (f_1,f_2,f_3 )|
\lesssim \int_{\wt \Omega_u \setminus \Omega_{u-1}}
S^2_{v_1} f_1 S^{k_2,v_2} f_2 S^1_{k_3, a^1_{i,b}} (1_{\wt \Omega_u}f_3).
\end{equation*}
The boundedness of $S^1_{k_3, a^1_{i,b}}$ is based on \eqref{eq:wforsmallA} and Lemma \ref{lem:DetSquareShift}.
From here the proof can be concluded as in Lemma \ref{lem:ResAveShiftEx}.

Next, we consider the case where $\Lambda^{\omega,\omega'}_{K \times V}$ is defined by
$$
\sum_{\substack{I_1, I_2, I_3 \in \calD^n_\omega \\ I_i^{(k_i)} = K+\omega}} 
\sum_{\substack{J_1, J_2, J_3 \in \calD^m_{\omega'} \\ J_i^{(v_i)}  = V+\omega'}} 
a^{\omega,\omega'}_{K+\omega, V+\omega', (I_i), (J_j)}  \\
  \langle a^2_{i,\omega'}(b, f_1),  h_{I_1}^0 \otimes   h_{J_1}\rangle \langle f_2,  h_{I_2} \otimes  h_{J_2}\rangle  
\langle  f_3, h_{I_3} \otimes  h_{J_3}^0 \rangle,
$$
where $i=1,2$. This time we define
$$
\Omega_u = \{ S^2_{v_1,a^2_{i,b}}f_1 S^{k_2,v_2}f_2 > C_0 2^{-u}|E|^{-1/r}\}.
$$
Based on these, we construct the sets $\wt \Omega_u$ and the collections $\calR_u$ as before. This time the localisation property
is clear, as $f_3$ is free. For a fixed $u$, we have corresponding
to \eqref{eq:SquaresAppear} that
\begin{equation*}
\sum_{K \times V\in \calR_u} \E_{\omega,\omega'}
| \Lambda^{\omega,\omega'}_{K \times V} (f_1,f_2,f_3 )|
\lesssim \int_{\wt \Omega_u \setminus \Omega_{u-1}}
S^2_{v_1,a^2_{i,b}}f_1 S^{k_2,v_2}f_2 S^1_{k_3} (1_{\wt \Omega_u}f_3),
\end{equation*}
and the proof can be concluded analogously. 

The remaining cases are: we have $\bla (\langle b \rangle_{J_1,2}-\langle b \rangle_{I_1 \times J_1}) \langle  f_1, h_{J_1} \rangle_2,    h_{I_1}^0 \bra$, or
we have the factor $\langle b \rangle_{I_3 \times J_3}- \langle b \rangle_{I_1 \times J_1}$ at the front. These can be done similarly, the last one
being easiest due to Lemma \ref{lem:bmobound}. We have now proved \eqref{eq:ResAveShift} for the shifts of the type \eqref{eq:ResAveShiftMixed}.

\subsection*{Shifts of other type}
Let us briefly comment on commutators of shifts that are of different type than above.
Depending on the shift, the identities from Section \ref{sec:KeyIdentities} give various terms.
These are all handled similarly as above, the main difference being in the construction of the sets $\Omega_u$ and in
the use of  different combinations of square functions and maximal functions.
We give a few indications of the required modifications.

We did not encounter the $A_{i, \omega, \omega'}(b, \cdot)$ operators above, so we comment on a few cases which entail them.
Suppose we are dealing with terms of the form
$$
a^{\omega,\omega'}_{K+\omega, V+\omega', (I_i), (J_j)}
\langle A_{i, \omega, \omega'}(b, f_1),  h_{I_1} \otimes   h_{J_1}\rangle \langle f_2,  h_{I_2}^0 \otimes  h_{J_2}^0\rangle  
\langle  f_3, h_{I_3} \otimes  h_{J_3} \rangle,
$$
where $i=1,\dots,8$. Let $A_{i,b}=\{A_{i,\omega,\omega'}(b, \cdot) \}_{\omega,\omega'}$ and let $S^{k_1,v_1}_{A_{i,b}}$ be the related square function.
This time one defines
$$
\Omega_u=\{S^{k_1,v_1}_{A_{i,b}}f_1 Mf_2 > C_0 2^{-u} |E|^{-1/r}\}.
$$
The boundedness of $S^{k_1,v_1}_{A_{i,b}}$ is based on \eqref{eq:wforlargeA1}, \eqref{eq:wforlargeA2} and Lemma \ref{lem:DetSquareShift}.
The proof proceeds as previously.

Related to terms 
$$
a^{\omega,\omega'}_{K+\omega, V+\omega', (I_i), (J_j)}
\langle f_1,  h_{I_1} \otimes   h_{J_1}\rangle \langle f_2,  h_{I_2}^0 \otimes  h_{J_2}^0\rangle  
\langle  A_{i, \omega, \omega'}(b, f_3), h_{I_3} \otimes  h_{J_3} \rangle,
$$
one sets
$$
\Omega_u=\{S^{k_1,v_1}f_1 Mf_2 > C_0 2^{-u} |E|^{-1/r}\}.
$$
Corresponding to the key localisation property \eqref{eq:localisation}, the operators $A_{i,\omega,\omega'}(b, \cdot)$ 
satisfy that if $I_3 \times J_3 \in \calD^n_\omega \times \calD^m_{\omega'}$ then
$$
\langle  A_{i, \omega, \omega'}(b, f_3), h_{I_3} \otimes  h_{J_3} \rangle
=\langle  A_{i, \omega, \omega'}(b, 1_{I_3 \times J_3}f_3), h_{I_3} \otimes  h_{J_3} \rangle.
$$
In the proof one uses related to $f_3$ the square function $S^{k_3,v_3}_{A_{i,b}}f_3$.

Finally, terms of the form
$$
a^{\omega,\omega'}_{K+\omega, V+\omega', (I_i), (J_j)}
\langle f_1,  h_{I_1} \otimes   h_{J_1}\rangle \langle f_2,  h_{I_2} \otimes  h_{J_2}\rangle  
\langle  (b-\langle b \rangle_{I_3 \times J_3})f_3, h_{I_3}^0 \otimes  h_{J_3}^0 \rangle
$$
are also easy to handle via Lemma \ref{lem:maximalbound}.

\subsection{Concluding the proof of Theorem \ref{thm:com1ofmodelQuasiBanach}}
Having now proved \eqref{eq:ResAveShiftMixed} for all shift types, it only remains to interpolate to get Theorem \ref{thm:com1ofmodelQuasiBanach}.
Let now $1/p + 1/q = 1/r$, $1 < p, q < \infty$, and $S_{\omega, \omega'}$ be a shift of any type.
At this point we know that for $r > 1$ we have
\begin{equation}\label{eq:BRange}
\|\E_{\omega,\omega'} [b,S_{\omega, \omega'}]_1(f_1, f_2)\|_{L^r(\R^{n+m})} 
\lesssim (1+\max(k_i, v_i))\|f_1\|_{L^p(\R^{n+m})} \|f_2\|_{L^q(\R^{n+m})}
\end{equation}
and for $r < 1$ we have
\begin{equation}\label{eq:Weak}
 \|\E_{\omega,\omega'} [b,S_{\omega, \omega'}]_1(f_1, f_2)\|_{L^{r, \infty}(\R^{n+m})} 
\lesssim (1+\max(k_i, v_i))\|f_1\|_{L^p(\R^{n+m})} \|f_2\|_{L^q(\R^{n+m})}.
\end{equation}
Notice that for $r = 1$ we may easily get that
if $0 < |E_i| < \infty$, $i = 1,2,3$, there exists $E_3' \subset E_3$ so that $|E_3'| \ge |E_3|/2$, and so that
for all $|f_1| \le 1_{E_1}$, $|f_2| \le 1_{E_2}$ and $|f_3| \le 1_{E_3'}$ we have
$$
|\langle \E_{\omega,\omega'} [b,S_{\omega, \omega'}]_1(f_1, f_2), f_3\rangle| \lesssim (1+\max(k_i, v_i)) |E_1|^{1/p}|E_2|^{1/p'}. 
$$
This follows by taking convex combinations of our existing estimates \eqref{eq:BRange}, \eqref{eq:Weak}.
Then use e.g. Theorem 3.8 in Thiele's book \cite{Th:Book} to update all of our estimates that are either
weak type (if $r < 1$) or restricted weak type (if $r=1$) into strong type bounds. Finally, notice that the cases $p= \infty$ or $q = \infty$ can now
be obtained by duality. Indeed, let $p = \infty$ and $r = q \in (1,\infty)$. Then we have
$$
|\langle \E_{\omega,\omega'} [b, S_{\omega, \omega'}]_1(f_1, f_2), f_3\rangle| 
= |\langle \E_{\omega,\omega'} [b, S^{1*}_{\omega, \omega'}]_1(f_3, f_2), f_1\rangle|,
$$
where we used that $[b, S_{\omega, \omega'}]_1^{1*} = -[b, S_{\omega, \omega'}^{1*}]_1$. It remains to use the at this point already known
bound
$$
\| \E_{\omega,\omega'} [b, S^{1*}_{\omega, \omega'}]_1(f_3, f_2)\|_{L^1(\R^{n+m})} \lesssim (1+\max(k_i, v_i)) \|f_3\|_{L^{q'}(\R^{n+m})}\|f_2\|_{L^{q}(\R^{n+m})}.
$$
We have proved Theorem \ref{thm:com1ofmodelQuasiBanach}.

\section{Iterated commutators}\label{sec:iterated}
For the iterated commutators $[b_2, [b_1, T]_1]_2$ (or $[b_2, [b_1, T]_1]_1$)
we need weighted bounds for the linear commutators $[b_2, A_i(b_1, \cdot)]$ and
$[b_2, a_i^1(b_1, \cdot)]$. The need arises similarly as previously when we needed weighted bounds for $A_i(b_1, \cdot)$
and $a_i^1(b_1, \cdot)$ (stated in Section \ref{sec:marprod}) when considering $[b_1, T]_1$. Again, the \emph{weighted}
versions are only needed for the boundedness of some square functions as in Lemma \ref{lem:DetSquareShift}, which
are needed in the quasi--Banach estimates. As these are linear estimates, some of them were already
considered in \cite{HPW} -- namely, for $i = 1,2,3,4$ they even proved Bloom type two-weight estimates for $[b_2, A_i(b_1, \cdot)]$, when
$b_2 \in \bmo(\R^n \times \R^m)$ and $b_1 \in \BMO_{\textup{prod}}(\R^{n+m})$. However, we need the one-weight
versions also for $i = 5, 6,7,8$, and the proofs are quite straightforward with the our by now familiar method. As we do not have any use
for Bloom type estimates, we content here by giving a quick proof of the one-weight result.
\begin{lem}\label{lem:weightedoneparcommutator}
Let $\|b_1\|_{\bmo(\R^{n+m})} = \|b_2\|_{\bmo(\R^{n+m})} = 1$, $1 < p < \infty$ and $w \in A_p(\R^n \times \R^m)$.
Then for $i = 1, \ldots, 8$ we have
$$
\| [b_2, A_i(b_1, f)] \|_{L^p(w)} \le C([w]_{A_p(\R^n \times \R^m)}) \|f\|_{L^p(w)}
$$
and for $i = 1,2$ we have
$$
\| [b_2, a_i^1(b_1, f)] \|_{L^p(w)} \le C([w]_{A_p(\R^n \times \R^m)}) \|f\|_{L^p(w)}.
$$
\end{lem}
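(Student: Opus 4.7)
The plan is to combine the commutator identity
\begin{equation*}
[b_2, A_i(b_1, \cdot)]f = b_2 A_i(b_1, f) - A_i(b_1, b_2 f)
\end{equation*}
with the martingale decomposition protocol of Section \ref{sec:marprod}. For each $i$, the operator $A_i(b_1, b_2 f)$ pairs $b_2 f$ against a Haar object of a specific type: fully cancellative $h_I \otimes h_J$ for $i \in \{1, 5, 7\}$, the mixed $h_I^0 \otimes h_J$ or $h_I \otimes h_J^0$ for $i \in \{2, 3, 6, 8\}$, and the fully non-cancellative $h_I^0 \otimes h_J^0$ (i.e. the average over $I \times J$) for $i = 4$. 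I would expand each such pairing using the corresponding identity \eqref{eq:biparEX}, \eqref{eq:1EX} (or its symmetric variant), or \eqref{eq:noEX}, exactly as in the proofs of Lemmas \ref{lem:case1}--\ref{lem:case3}, but with $f$ replaced by $b_2 f$ inside a linear operator.

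The expansion produces two families of terms. The first consists of composition terms of the form $A_i(b_1, A_j(b_2, f))$ (or $A_i(b_1, a_j^k(b_2, f))$ in the mixed case), which are bounded on $L^p(w)$ as compositions of two operators already controlled by \eqref{eq:wforlargeA1}--\eqref{eq:wforsmallA}. The second comes from the ``free average'' term $\langle b_2\rangle_{I\times J}(\cdots)$ in each expansion, which combines with $b_2 A_i(b_1, f)$ after adding and subtracting $\langle b_2\rangle_{I\times J}$ inside the sum. The net result is a residual of the schematic form
\begin{equation*}
\sum_{I, J} (\text{coefficient coming from } b_1) \cdot \bigl[\text{local factor involving } (b_2 - \langle b_2\rangle_{I\times J}) f\bigr] \cdot (\text{Haar piece}).
\end{equation*}

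I would estimate this residual by duality against $g \in L^{p'}(w^{1-p'})$. The $(b_2 - \langle b_2\rangle_{I\times J})$-factor is absorbed using exactly the bounds in Lemma \ref{lem:maximalbound}: depending on the pairing type, the local factor is controlled pointwise by $\langle M_{b_2} f\rangle_{I\times J}$, by $\langle \varphi_{\calD^m, b_2}(f), \tfrac{1_I}{|I|} \otimes h_J\rangle$, or by its $\R^n$-symmetric version. What remains is a standard bi-parameter paraproduct-type inequality with symbol $b_1$: either the product BMO estimate $\sum_{I,J} |\langle b_1, h_I \otimes h_J\rangle||A_{I,J}| \lesssim \| (\sum_{I,J} |A_{I,J}|^2 1_{I\times J}/|I\times J|)^{1/2}\|_{L^1}$ (for $i \in \{1,2,3,4\}$) or its one-variable analogue using the full $\bmo$ assumption on $b_1$ (for $i \in \{5,6,7,8\}$). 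Combining these with the weighted $L^p(w)$-boundedness of $M_{b_2}$ and $\varphi_{\calD^m, b_2}$ from Lemma \ref{lem:bmaxbounds}, and with the weighted square function estimate of Lemma \ref{lem:mest}, yields the claimed bound.

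The main delicacy (and expected main obstacle) is in the cases $i \in \{5,6,7,8\}$, where $b_1$ enters through the mixed objects $E_I^1 \Delta_J^2 b_1$ or $\Delta_I^1 E_J^2 b_1$: here one must apply the paraproduct estimate in only one variable, uniformly in the other, and combine it with the residual $(b_2 - \langle b_2\rangle_{I \times J})$-factor without losing the square-function structure. The one-parameter case $[b_2, a_i^1(b_1, \cdot)]$ is entirely analogous: one uses the one-parameter martingale identity in the $\R^n$-direction in place of \eqref{eq:biparEX}--\eqref{eq:1EX}, and the argument goes through with the obvious substitutions.
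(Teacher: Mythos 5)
Your strategy on the input side (expanding $b_2 f$ inside $A_i(b_1,\cdot)$ according to the cancellation type of the Haar functions hitting $f$, and collecting composition terms $A_i(b_1, A_j(b_2,f))$, $A_i(b_1, a^k_j(b_2,f))$) matches the paper. The genuine gap is in your treatment of the outer multiplication $b_2 A_i(b_1,f)$: you propose to handle it by merely adding and subtracting $\langle b_2\rangle_{I\times J}$, so that after the free averages cancel the residual is $\sum_{I,J}(\text{coefficient from }b_1)\,\langle f,\cdots\rangle\,(b_2-\langle b_2\rangle_{I\times J})\cdot(\text{output Haar piece})$ -- note the factor $b_2-\langle b_2\rangle_{I\times J}$ sits on the \emph{output}, not on $f$ as your schematic suggests. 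Dualizing against $g$, you must control $\langle (b_2-\langle b_2\rangle_{I\times J})g,\ \text{output Haar}\rangle$, and here Lemma \ref{lem:maximalbound} does not apply whenever the output of $A_i$ contains a cancellative Haar function, which is the case for every $i\neq 1$ (e.g.\ $A_5$ has output $h_I\otimes h_Jh_J$, while $A_4$, $A_6$, $A_8$ have fully cancellative outputs $h_I\otimes h_J$): the first estimate of that lemma requires the $b_2$-factor to already be an averaged difference such as $\langle b_2\rangle_{I,1}-\langle b_2\rangle_{I\times J}$, and the second requires full non-cancellativity. Nor can you simply discard the cancellation via $|h_I|\le |I|^{-1/2}1_I$: for, say, $A_5$ the subsequent $b_1$-paraproduct estimate (applied in the $J$-variable uniformly in $I$, since $b_1$ enters through $\langle b_1, \tfrac{1_I}{|I|}\otimes h_J\rangle$) leaves an $\ell^1$-sum over $I$ with no remaining orthogonality, and the argument collapses. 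The same problem occurs in the one-parameter case $[b_2, a^1_2(b_1,\cdot)]$, whose output contains a cancellative $h_I$.

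The missing idea is that the expansion protocol must be applied on the dual side as well: in $\langle A_i(b_1,f), b_2 g\rangle$ one expands $b_2 g$ according to the cancellation structure of the \emph{output} Haar functions of $A_i$ -- bi-parameter via \eqref{eq:biparEX} when both are cancellative, one-parameter via \eqref{eq:1EX} in the mixed case, and no expansion only in the fully non-cancellative case $A_1$. This produces additional composition terms $\langle A_i(b_1,f), a^k_j(b_2,g)\rangle$ and $\langle A_i(b_1,f), A_j(b_2,g)\rangle$, absent from your list but harmless by \eqref{eq:wforlargeA1}--\eqref{eq:wforsmallA}, and it is exactly this step that converts the residual $b_2$-factor into the averaged differences $\langle b_2\rangle_{I,1}-\langle b_2\rangle_{I\times J}$ (or the symmetric version) that Lemma \ref{lem:maximalbound} and the weighted bounds for $\varphi_{\calD^n,b_2}$, $M_{b_2}$ from Lemma \ref{lem:bmaxbounds} actually control; the free averages $\langle b_2\rangle_{I\times J}$ from the two expansions then cancel automatically. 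This is what the paper does for $A_5$: it obtains $\sum_{i=1}^2\langle A_5(b_1,f_1), a^1_i(b_2,f_2)\rangle$, the compositions $\langle A_5(b_1,A_i(b_2,f_1)), f_2\rangle$, and a residual with factor $\langle b_2\rangle_{I,1}-\langle b_2\rangle_{I\times J}$, finished by $\varphi_{\calD^n,b_2}$ together with the dualized weighted bound \eqref{eq:A5weighted} for $A_5(b_1,\cdot)$. With this correction your argument would align with the paper's; as written, it fails for all cases except $i=1$.
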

\begin{proof}
We only prove
$$
\| [b_2, A_5(b_1, f)] \|_{L^p(w)} \le C([w]_{A_p(\R^n \times \R^m)}) \|f\|_{L^p(w)},
$$
the rest of the cases being similar. Denote $\lambda_{I,J}^{b_1} = \big\langle b_1, \frac{1_I}{|I|} \otimes h_J\big\rangle$. The estimate
\begin{equation}\label{eq:A5weighted}
\sum_{I,J} |\lambda_{I,J}^{b_1}| |\langle f_1, h_I \otimes h_J\rangle|  \langle |\langle f_2, h_I \rangle_1| \rangle_J
\le C([w]_{A_p(\R^n \times \R^m)}) \|f_1\|_{L^p(w)} \|f_2\|_{L^p(w')}
\end{equation}
is the dualised version of the already known weighted estimate of $A_5(b_1, \cdot)$.
Expanding as usual we get that
\begin{align*}
\langle A_5(b_1, f_1)&, b_2f_2\rangle - \langle A_5(b_1, b_2f_1), f_2\rangle \\
&= \sum_{i=1}^2 \langle A_5(b_1, f_1), a_i^1(b_2, f_2) \rangle  \\
&- \sum_{i=1}^8 \langle A_5(b_1, A_i(b_2,f_1)), f_2) \rangle  \\
&+ \sum_{I,J} \lambda_{I,J}^{b_1} \langle f_1, h_I \otimes h_J\rangle \langle (\langle b_2\rangle_{I,1} - \langle b_2 \rangle_{I \times J}) \langle f_2, h_I\rangle_1, 
h_Jh_J\rangle.
\end{align*}
For the last term first estimate
$$
|\langle (\langle b_2\rangle_{I,1} - \langle b_2 \rangle_{I \times J}) \langle f_2, h_I\rangle_1, h_J h_J\rangle| \le \Big\langle \varphi_{\calD^n, b_2}(f_2), h_I \otimes \frac{1_J}{|J|} \Big\rangle,
$$
then use \eqref{eq:A5weighted} and the weighted boundedness of the operator $\varphi_{\calD^n, b_2}$. The first two terms are even
more immediate -- we are done.
\end{proof}

\subsection{Banach range boundedness}
We consider first the Banach range boundedness of $[b_2, [b_1, U]_1]_2$, when $U = U^v_k$
is a general bilinear bi-parameter model operator satisfying \eqref{eq:bb} as in Section \ref{sec:BanachforModels}, and
$\|b_1\|_{\bmo(\R^{n+m})} = \|b_2\|_{\bmo(\R^{n+m})} = 1$. For clarity we pick one explicit $U$:
\begin{equation*}
\begin{split}
\langle U(f_1,f_2),f_3 \rangle
= \sum_{\substack{K \in \calD^n \\ V \in \calD^m}} 
\sum_{\substack{I_1, I_2, I_3 \in \calD^n \\ I_1^{(k_1)} = I_2^{(k_2)} = I_3^{(k_3)} = K}} 
&\sum_{\substack{J_1, J_2, J_3 \in \calD^m \\ J_1^{(v_1)} = J_2^{(v_2)} = J_3^{(v_3)} = V}} a_{K, V, (I_i), (J_j)} \\
&\times \langle f_1, h_{I_1}^0 \otimes  h_{J_1}\rangle \langle f_2, \wt h_{I_2} \otimes \wt h_{J_2}\rangle  
\langle f_3, h_{I_3} \otimes h_{J_3}^0 \rangle.
\end{split}
\end{equation*}
Lemma \ref{lem:case3} gives that
\begin{align*}
&\langle [b_1,U]_1(f_1,f_2),f_3 \rangle \\
&= \sum_{i=1}^2 \sum_{K, V, (I_i), (J_j)} a_{K, \ldots} \langle f_1, h_{I_1}^0 \otimes  h_{J_1}\rangle \langle f_2, \wt h_{I_2} \otimes \wt h_{J_2}\rangle
\langle a_i^1(b_1,f_3), h_{I_3} \otimes h_{J_3}^0 \rangle \\
&+  \sum_{K, V, (I_i), (J_j)} a_{K, \ldots} 
\langle f_1, h_{I_1}^0 \otimes  h_{J_1}\rangle \langle f_2, \wt h_{I_2} \otimes \wt h_{J_2}\rangle
\bla (\langle b_1 \rangle_{I_3,1}-\langle b_1 \rangle_{I_3 \times J_3}) \langle f_3 ,h_{I_3} \rangle_1, h^0_{J_3} \bra \\
&- \sum_{i=1}^2 \sum_{K, V, (I_i), (J_j)} a_{K, \ldots} \langle a_i^2(b_1,f_1), h_{I_1}^0 \otimes  h_{J_1}\rangle \langle f_2, \wt h_{I_2} \otimes \wt h_{J_2}\rangle \langle f_3, h_{I_3} \otimes h_{J_3}^0 \rangle \\
&+ \sum_{K, V, (I_i), (J_j)} a_{K, \ldots} \bla (\langle b_1 \rangle_{I_1 \times J_1} - \langle b_1 \rangle_{J_1, 2})\langle f_1, h_{J_1}\rangle_2, h_{I_1}^0\bra \langle f_2, \wt h_{I_2} \otimes \wt h_{J_2}\rangle \langle f_3, h_{I_3} \otimes h_{J_3}^0 \rangle \\
&+  \sum_{K, V, (I_i), (J_j)} a_{K, \ldots} [\langle b \rangle_{I_3 \times J_3} - \langle b \rangle_{I_1 \times J_1}]
 \langle f_1, h_{I_1}^0 \otimes  h_{J_1}\rangle \langle f_2, \wt h_{I_2} \otimes \wt h_{J_2}\rangle  
\langle f_3, h_{I_3} \otimes h_{J_3}^0 \rangle \\
&=: I + II + III + IV + V.
\end{align*}

When considering $[b_2, [b_1, U]_1]_2$ the first line $I_1$ from above leads to the term
\begin{align*}
\sum_{i=1}^2 &\sum_{K, V, (I_i), (J_j)} a_{K, \ldots} \langle f_1, h_{I_1}^0 \otimes  h_{J_1}\rangle \langle f_2, \wt h_{I_2} \otimes \wt h_{J_2}\rangle
\langle a_i^1(b_1, b_2f_3), h_{I_3} \otimes h_{J_3}^0 \rangle \\
&- \sum_{i=1}^2 \sum_{K, V, (I_i), (J_j)} a_{K, \ldots} \langle f_1, h_{I_1}^0 \otimes  h_{J_1}\rangle \langle b_2f_2, \wt h_{I_2} \otimes \wt h_{J_2}\rangle
\langle a_i^1(b_1,f_3), h_{I_3} \otimes h_{J_3}^0 \rangle.
\end{align*}
We add and subtract
$$
\sum_{i=1}^2 \sum_{K, V, (I_i), (J_j)} a_{K, \ldots} \langle f_1, h_{I_1}^0 \otimes  h_{J_1}\rangle \langle f_2, \wt h_{I_2} \otimes \wt h_{J_2}\rangle
\langle b_2a_i^1(b_1,f_3), h_{I_3} \otimes h_{J_3}^0 \rangle,
$$
so that we need to consider
\begin{align*}
I_{1} := \sum_{i=1}^2 \langle &U(f_1, f_2), a_i^1(b_1, b_2f_3) - b_2a_i^1(b_1,f_3) \rangle \\
&= -\sum_{i=1}^2 \langle U(f_1, f_2), [b_2, a_i^1(b_1, \cdot)](f_3)\rangle
\end{align*}
and
\begin{align*}
I_{2} := \sum_{i=1}^2 \langle [b_2, U]_2(f_1, f_2), a_i^1(b_1,f_3)\rangle.
\end{align*}
Lemma \ref{lem:weightedoneparcommutator} in particular gives
$$
\|[b_2, a_i^1(b_1, \cdot)](f_3)\|_{L^s(\R^{n+m})} \lesssim \|f_3\|_{L^s(\R^{n+m})}, \qquad s \in (1,\infty).
$$
This, together with the boundedness of $U$, takes care of the term $I_{1}$. The term $I_{2}$ is handled using the already
known boundedness of the commutator $[b_2, U]_2$, and the boundedness of $a_i^1(b_1, \cdot)$, $i=1,2$.

When considering $[b_2, [b_1, U]_1]_2$ the term $II$ from above leads to the term
\begin{align*}
&\sum_{K, V, (I_i), (J_j)} a_{K, \ldots} \langle f_1, h_{I_1}^0 \otimes  h_{J_1}\rangle \langle f_2, \wt h_{I_2} \otimes \wt h_{J_2}\rangle
\bla (\langle b_1 \rangle_{I_3,1}-\langle b_1 \rangle_{I_3 \times J_3}) \langle b_2f_3 ,h_{I_3} \rangle_1, h^0_{J_3} \bra \\
&-\sum_{K, V, (I_i), (J_j)} a_{K, \ldots} \langle f_1, h_{I_1}^0 \otimes  h_{J_1}\rangle \langle b_2f_2, \wt h_{I_2} \otimes \wt h_{J_2}\rangle
\bla (\langle b_1 \rangle_{I_3,1}-\langle b_1 \rangle_{I_3 \times J_3}) \langle f_3 ,h_{I_3} \rangle_1, h^0_{J_3} \bra.
\end{align*}
Here we simply start following our original strategy of expanding $b_2f_3$ and $b_2f_2$. How $b_2f_2$ is expanded will, of course, depend
on the the Haar functions $\wt h_{I_2}$ and $\wt h_{J_2}$. So we first expand $b_2f_3$. The first line from above can then be written as the sum of
\begin{align*}
\sum_{i=1}^2 \sum_{K, V, (I_i), (J_j)} a_{K, \ldots} \langle f_1, h_{I_1}^0& \otimes  h_{J_1}\rangle \langle f_2, \wt h_{I_2} \otimes \wt h_{J_2}\rangle \\
&\times \bla (\langle b_1 \rangle_{I_3,1}-\langle b_1 \rangle_{I_3 \times J_3}) \langle a_i^1(b_2,f_3) ,h_{I_3} \rangle_1, h^0_{J_3} \bra,
\end{align*}
\begin{align*}
 \sum_{K, V, (I_i), (J_j)}& a_{K, \ldots} \langle f_1, h_{I_1}^0 \otimes  h_{J_1}\rangle \langle f_2, \wt h_{I_2} \otimes \wt h_{J_2}\rangle \\
 &\times \bla (\langle b_1 \rangle_{I_3,1}-\langle b_1 \rangle_{I_3 \times J_3})(\langle b_2 \rangle_{I_3,1}-\langle b_2 \rangle_{I_3 \times J_3})  \langle f_3,h_{I_3} \rangle_1, h^0_{J_3} \bra
\end{align*}
and
\begin{align*}
\sum_{K, V, (I_i), (J_j)} a_{K, \ldots}\langle b_2\rangle_{I_3 \times J_3} \langle f_1, h_{I_1}^0& \otimes  h_{J_1}\rangle \langle f_2, \wt h_{I_2} \otimes \wt h_{J_2}\rangle \\
&\times \bla (\langle b_1 \rangle_{I_3,1}-\langle b_1 \rangle_{I_3 \times J_3}) \langle f_3 ,h_{I_3} \rangle_1, h^0_{J_3} \bra.
\end{align*}
We take care of the first two terms -- the third term is of course not handled alone. The first term just uses the estimate from Lemma \ref{lem:maximalbound}
saying that
\begin{align*}
\big|\bla (\langle b_1 \rangle_{I_3,1}-\langle& b_1 \rangle_{I_3 \times J_3}) \langle a_i^1(b_2,f_3) ,h_{I_3} \rangle_1, h^0_{J_3} \bra\big| \\
&\le \langle \varphi_{\calD^n, b_1}(a_i^1(b_2,f_3)), h_{I_3} \otimes h_{J_3}^0\rangle,
\end{align*}
then the boundedness \eqref{eq:bb}, and finally the bound
$$
\|  \varphi_{\calD^n, b_1}(a_i^1(b_2,f_3)) \|_{L^{r'}(\R^{n+m})} \lesssim \| a_i^1(b_2,f_3)\|_{L^{r'}(\R^{n+m})}
\lesssim \|f_3\|_{L^{r'}(\R^{n+m})}.
$$
The second term goes in a similar way. Indeed, notice that you can use natural maximal functions, like
$$
M_{\langle b_1 \rangle_{I, 1}, \langle b_2 \rangle_{I, 1}} f = \sup_{J \subset \R^m} \frac{1_J}{|J|} \int_J |\langle b_1 \rangle_{I, 1} - 
\langle b_1 \rangle_{I \times J}||\langle b_2 \rangle_{I, 1} -  \langle b_2 \rangle_{I \times J}| |f|,
$$
for which results as in Lemma \ref{lem:bmaxbounds} hold with essentially the same proof.

We are now ready to expand $b_2f_2$. To avoid a case chase we assume for simplicity that $\wt h_{I_2} = h_{I_2}^0$ and
$\wt h_{J_2} = h_{J_2}$. Then we have
\begin{align*}
\langle b_2f_2, h_{I_2}^0 \otimes h_{J_2}\rangle &= \sum_{i=1}^2 \langle a^2_i(b_2,f_2), h_{I_2}^0 \otimes h_{J_2}\rangle \\
&+ \langle (\langle b_2 \rangle_{J_2,2} - \langle b_2 \rangle_{I_2 \times J_2}) \langle f_2, h_{J_2} \rangle_2, h_{I_2}^0 \rangle
+  \langle b_2 \rangle_{I_2 \times J_2} \langle  f_2, h_{I_2}^0 \otimes h_{J_2}\rangle.
\end{align*}
It is easy to handle
\begin{align*}
-\sum_{i=1}^2 \sum_{K, V, (I_i), (J_j)} a_{K, \ldots} \langle f_1, h_{I_1}^0 \otimes & h_{J_1}\rangle \langle a^2_i(b_2,f_2), h_{I_2}^0 \otimes h_{J_2}\rangle \\
&\times \bla (\langle b_1 \rangle_{I_3,1}-\langle b_1 \rangle_{I_3 \times J_3}) \langle f_3 ,h_{I_3} \rangle_1, h^0_{J_3} \bra
\end{align*}
and
\begin{align*}
- \sum_{K, V, (I_i), (J_j)} a_{K, \ldots} \langle f_1, h_{I_1}^0 \otimes & h_{J_1}\rangle \langle (\langle b_2 \rangle_{J_2,2} - \langle b_2 \rangle_{I_2 \times J_2}) \langle f_2, h_{J_2} \rangle_2, h_{I_2}^0 \rangle \\
&\times \bla (\langle b_1 \rangle_{I_3,1}-\langle b_1 \rangle_{I_3 \times J_3}) \langle f_3 ,h_{I_3} \rangle_1, h^0_{J_3} \bra,
\end{align*}
and so we are only left with
\begin{align*}
\sum_{K, V, (I_i), (J_j)} a_{K, \ldots}[\langle b_2\rangle_{I_3 \times J_3}-\langle b_2& \rangle_{I_2 \times J_2}] \langle f_1, h_{I_1}^0 \otimes  h_{J_1}\rangle \langle f_2, h_{I_2}^0 \otimes  h_{J_2}\rangle \\
&\times \bla (\langle b_1 \rangle_{I_3,1}-\langle b_1 \rangle_{I_3 \times J_3}) \langle f_3 ,h_{I_3} \rangle_1, h^0_{J_3} \bra,
\end{align*}
which is again easy (using Lemma \ref{lem:maximalbound}, Lemma \ref{lem:bmobound} and \eqref{eq:bb}). We are done with the contribution of
$II$ to  $[b_2, [b_1, U]_1]_2$.

The contribution of $III$ to $[b_2, [b_1, U]_1]_2$ simply is
$$
- \sum_{i=1}^2 \langle [b_2,U]_2(a_i^2(b_1,f_1), f_2), f_3\rangle,
$$
which is readily in control. The contributions of the terms $IV$ and $V$ to $[b_2, [b_1, U]_1]_2$ are similarly easy, but they require
running our usual argument, instead of getting an easy formula like in the case $III$. We have now taken care of $[b_2, [b_1, U]_1]_2$.

With some thought we can see that above type arguments also take care of a commutator of the form $[b_2, [b_1, U]_1]_1$.
We have proved the following theorem.
\begin{thm}\label{thm:com2ofmodelBanach}
Let $p,q,r \in (1,\infty)$, $1/p + 1/q = 1/r$, $0 \le k_i \in \Z$ and $0 \le v_i \in \Z$, $i=1,2,3$. 

Let $U = U^v_k$ be a general bilinear bi-parameter model operator satisfying
\eqref{eq:bb}. In particular, $U$ can be a bilinear bi-parameter shift, partial paraproduct or full paraproduct.
Then for $b_1, b_2$ such that $\|b_1\|_{\bmo(\R^{n+m})} = \|b_2\|_{\bmo(\R^{n+m})} = 1$ we have
\begin{align*}
\|[b_2, [b_1, U]_1]_2(f_1, f_2)\|_{L^r(\R^{n+m})} + \|[b_2,& [b_1, U]_1]_1(f_1, f_2)\|_{L^r(\R^{n+m})} \\
&\lesssim (1+\max(k_i, v_i))^2 \|f_1\|_{L^p(\R^{n+m})} \|f_2\|_{L^q(\R^{n+m})}.
\end{align*}
It follows that if $T$ is a bilinear bi-parameter singular integral satisfying the assumptions of Theorem \ref{thm:rep} then also
\begin{align*}
\|[b_2, [b_1, T]_1]_2(f_1, f_2)\|_{L^r(\R^{n+m})} + \|[b_2, [b_1, &T]_1]_1(f_1, f_2)\|_{L^r(\R^{n+m})} \\
&\lesssim \|f_1\|_{L^p(\R^{n+m})} \|f_2\|_{L^q(\R^{n+m})}.
\end{align*}
\end{thm}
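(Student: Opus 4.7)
The plan is to iterate the argument of Theorem \ref{thm:com1ofmodelBanach} by applying the martingale protocol of Section \ref{sec:marprod} twice. We focus first on $[b_2, [b_1, U]_1]_2$, and treat $[b_2, [b_1, U]_1]_1$ by a minor variant. We fix one symmetry class of $U$ for the exposition; the remaining cases follow from the same scheme since the identities of Section \ref{sec:KeyIdentities} cover all positions of the non-cancellative Haar functions.

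For the first step I would apply Lemma \ref{lem:case3} (or its analogue dictated by the Haar positions in $U$) to rewrite $\langle [b_1, U]_1(f_1, f_2), f_3\rangle$ as a sum of five types of terms: (I) trilinear pairings where the input $f_3$ has been replaced by $a_i^1(b_1, f_3)$; (II) pairings involving the BMO-difference factor $\bla (\langle b_1\rangle_{I_3,1}-\langle b_1\rangle_{I_3\times J_3})\langle f_3, h_{I_3}\rangle_1, h_{J_3}^0\bra$; (III) the symmetric counterpart acting on $f_1$ via $a_i^2(b_1,f_1)$; (IV) a $(\langle b_1\rangle_{J_1,2}-\langle b_1\rangle_{I_1\times J_1})$-pairing; and (V) a leftover $[\langle b_1\rangle_{I_3\times J_3}-\langle b_1\rangle_{I_1\times J_1}]$ factor. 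Terms (I) and (III) are of the form $\Lambda(f_1, f_2, P(b_1)f_3)$ for a bounded linear $P(b_1)$; terms (II), (IV), (V) carry explicit BMO averages and will be closed via Lemmas \ref{lem:maximalbound} and \ref{lem:bmobound}.

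For the second step I take $[b_2,\cdot]_2$ of each term. Terms of type (I)/(III) are handled by the standard add-and-subtract trick
\begin{align*}
\langle[b_2,\Lambda(\cdot,\cdot,P(b_1)\cdot)]_2(f_1,f_2),f_3\rangle
&=\Lambda(f_1,f_2,[b_2,P(b_1,\cdot)](f_3))\\
&\quad+\langle[b_2,U]_2(f_1,f_2),P(b_1)^{*}f_3\rangle,
\end{align*}
and the two resulting pieces are controlled by Lemma \ref{lem:weightedoneparcommutator} (applied in the unweighted form) combined with \eqref{eq:bb}, and by Theorem \ref{thm:com1ofmodelBanach} together with the unweighted boundedness of $a_i^1(b_1,\cdot)$, respectively. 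For terms of type (II), (IV), (V), I would expand $b_2 f_2$ (or $b_2 f_3$, according to which slot the commutator hits) using \eqref{eq:biparEX}, \eqref{eq:1EX}, or \eqref{eq:noEX}. The pieces where $b_2$ becomes a $\Delta_{I\times J}b_2$, $\Delta^2_J b_2$, or $\Delta^1_I b_2$ factor produce iterated paraproduct pairings, closed by the boundedness of the operators $A_i(b_2,\cdot)$, $a_i^1(b_2,\cdot)$, $a_i^2(b_2,\cdot)$ (Section \ref{sec:marprod}) together with \eqref{eq:bb}. The pieces where a single BMO-average factor for $b_2$ remains are paired with the already-present BMO factor for $b_1$; for these I would invoke a double variant of Lemma \ref{lem:maximalbound}, using the product-type adapted maximal function
\[
M_{\langle b_1\rangle_{I,1},\langle b_2\rangle_{I,1}} f = \sup_{J\subset\R^m}\frac{1_J}{|J|}\int_J |\langle b_1\rangle_{I,1}-\langle b_1\rangle_{I\times J}|\,|\langle b_2\rangle_{I,1}-\langle b_2\rangle_{I\times J}|\,|f|,
\]
whose $L^p$-boundedness is proved exactly as in Lemma \ref{lem:bmaxbounds} via the reverse Hölder inequality and the John--Nirenberg property of $\bmo$. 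The final leftover piece, carrying both a $\langle b_1\rangle$-difference and a $\langle b_2\rangle$-difference of averages of nearby rectangles, is summed using Lemma \ref{lem:bmobound} twice, yielding the quadratic factor $(1+\max(k_i,v_i))^2$.

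The variant $[b_2,[b_1,U]_1]_1$ is handled by the same template, the one novelty being that both expansions now target the same input $f_1$; this produces nested paraproducts $a_i^1(b_2, a_j^1(b_1,\cdot))$ and $A_i(b_2,A_j(b_1,\cdot))$, whose $L^p$-boundedness again reduces to Lemma \ref{lem:weightedoneparcommutator}. Finally, the transfer to a genuine bilinear bi-parameter singular integral $T$ is obtained by inserting the representation of Theorem \ref{thm:rep} and using that $\alpha_{k,v}=2^{-\alpha\max k_i/2}2^{-\alpha\max v_j/2}$ absorbs the polynomial factor $(1+\max(k_i,v_i))^2$, so the sum over complexities converges. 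The main obstacle is purely combinatorial bookkeeping: seven symmetry classes of $U$ times several expansion slots for the second commutator generate many cases, but each one is closed by the same mechanism (paraproduct boundedness plus the adapted maximal-function estimates) so no new analytical idea is needed beyond the double-BMO maximal function.
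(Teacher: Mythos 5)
Your proposal is correct and follows essentially the same route as the paper: first splitting $[b_1,U]_1$ via Lemma \ref{lem:case3} into the five terms, then handling the $b_2$-commutator by the add-and-subtract trick with Lemma \ref{lem:weightedoneparcommutator} and Theorem \ref{thm:com1ofmodelBanach} for the $a_i^1(b_1,\cdot)$/$a_i^2(b_1,\cdot)$ pieces, re-expanding $b_2f_2$, $b_2f_3$ by the martingale protocol for the remaining pieces, and closing with the two-symbol adapted maximal function $M_{\langle b_1\rangle_{I,1},\langle b_2\rangle_{I,1}}$ and a double application of Lemma \ref{lem:bmobound}, exactly as in the paper (the doubled maximal function you introduce is verbatim the paper's). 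Only cosmetic discrepancies remain (e.g.\ the $P(b_1)^*$ versus $P(b_1)$ bookkeeping in your add-and-subtract identity, and that term (III) needs no add-and-subtract since the slot-2 commutator does not touch it), so no substantive difference from the paper's argument.
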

We content with the formulation of the above theorem, and do not explicitly iterate more.
\subsection{Quasi--Banach estimates}
Our first goal is to prove:
\begin{thm}\label{thm:com2ofmodelQuasiBanach}
Let $\|b_1\|_{\bmo(\R^{n+m})} = \|b_2\|_{\bmo(\R^{n+m})} = 1$, and let $1 < p, q \le \infty$ and $1/2 < r < \infty$ satisfy $1/p+1/q = 1/r$.
Suppose $S_{\omega, \omega'} := S^{v}_{k, \mathcal{D}^n_{\omega},\mathcal{D}^m_{\omega'}}$
is a bilinear bi-parameter shift of complexity $(k,v)$ defined using the dyadic grids $\mathcal{D}^n_{\omega}$ and $\mathcal{D}^m_{\omega'}$.
Then we have
$$
 \|\mathbb{E}_{\omega, \omega'}[b_2,[b_1,S_{\omega, \omega'}]_1]_2(f_1, f_2)\|_{L^r(\R^{n+m})} 
 \lesssim (1+\max(k_i, v_i))^2 \|f_1\|_{L^p(\R^{n+m})} \|f_2\|_{L^q(\R^{n+m})},
$$
and similarly for $\mathbb{E}_{\omega, \omega'}[b_2,[b_1,S_{\omega, \omega'}]_1]_1$.
\end{thm}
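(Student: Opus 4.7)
The plan is to follow the two-step architecture used for the first-order commutator in Theorem \ref{thm:com1ofmodelQuasiBanach}. The Banach range ($r>1$) is immediate: by the triangle inequality
\begin{equation*}
\|\mathbb{E}_{\omega,\omega'}[b_2,[b_1,S_{\omega,\omega'}]_1]_2(f_1,f_2)\|_{L^r} \le \mathbb{E}_{\omega,\omega'}\|[b_2,[b_1,S_{\omega,\omega'}]_1]_2(f_1,f_2)\|_{L^r},
\end{equation*}
and Theorem \ref{thm:com2ofmodelBanach} (which applies pointwise in $\omega,\omega'$ since each $S_{\omega,\omega'}$ satisfies \eqref{eq:bb} uniformly in the randomisation) finishes the job. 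The real work is a restricted weak type bound at $r\in(1/2,1]$: given $E\subset\R^{n+m}$ with $0<|E|<\infty$, I want to produce $E'\subset E$ with $|E'|\ge|E|/2$ such that
\begin{equation*}
|\langle \mathbb{E}_{\omega,\omega'}[b_2,[b_1,S_{\omega,\omega'}]_1]_2(f_1,f_2),f_3\rangle|\lesssim(1+\max(k_i,v_i))^2\|f_1\|_{L^p}\|f_2\|_{L^q}|E|^{1/r'}
\end{equation*}
whenever $|f_3|\le 1_{E'}$. Interpolation with the Banach bound, the convex-combination trick at $r=1$, and duality at $p=\infty$ or $q=\infty$, exactly as at the end of Section \ref{sec:quasiviarest}, then conclude the proof.

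For the restricted weak type step I would apply the martingale-expansion protocol of Sections \ref{sec:marprod}--\ref{sec:KeyIdentities} twice: once to $[b_1,S_{\omega,\omega'}]_1$ via the appropriate analogue of Lemmas \ref{lem:case1}--\ref{lem:case3} for the Haar cancellation pattern of $S_{\omega,\omega'}$, and a second time to apply $[b_2,\cdot]_2$ to each resulting summand. The structural outcome is a finite list of terms in which one or two of the three shift slots has been replaced by a bilinear paraproduct $A_i(b_j,\cdot)$ or $a_i^\ell(b_j,\cdot)$, by a linear commutator $[b_2,A_i(b_1,\cdot)]$ or $[b_2,a_i^\ell(b_1,\cdot)]$, or by a $\varphi_{b_j}$-type pairing covered by Lemma \ref{lem:maximalbound}; the remaining \emph{free-average} terms carry factors $\langle b_j\rangle_{Q\times R}-\langle b_j\rangle_{I\times J}$ (one or two such, each bounded by Lemma \ref{lem:bmobound}) or a joint factor $(\langle b_1\rangle_{I,\ell}-\langle b_1\rangle_{I\times J})(\langle b_2\rangle_{I,\ell}-\langle b_2\rangle_{I\times J})$ controlled by a product adapted maximal function as in the proof of Theorem \ref{thm:com2ofmodelBanach}.

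With the decomposition in hand, each summand is run through the machinery of Lemma \ref{lem:ResAveShiftEx}. The decisive input is that every operator appearing in a slot is $L^2(w)$-bounded for $w\in A_2(\R^n\times\R^m)$: for the paraproducts by \eqref{eq:wforlargeA1}--\eqref{eq:wforsmallA}, and for the new linear commutators precisely by Lemma \ref{lem:weightedoneparcommutator}. These feed into Lemma \ref{lem:DetSquareShift} to yield the required deterministic square functions with the slot operator playing the role of $U$. I would then set $\Omega_u=\{S^\bullet f_1\cdot S^\sharp f_2>C_0 2^{-u}|E|^{-1/r}\}$ and $\widetilde\Omega_u=\{M 1_{\Omega_u}>c_1\}$, take $E'=E\setminus\widetilde\Omega_0$, organise the relevant $K\times V\in\calD_0$ into layers $\calR_u=\widehat\calR_u\setminus\widehat\calR_{u-1}$, and invoke the localisation property \eqref{eq:localisation}, which continues to hold because every new slot operator $A_i(b_j,\cdot)$, $a_i^\ell(b_j,\cdot)$, $[b_2,A_i(b_1,\cdot)]$, $\varphi_{b_j}$ preserves the support structure $1_{I\times J}f$ up to the same enlargement. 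This reduces the estimate to a sum telescoping in $u$ with gain $2^{-u(1-r)}$, summable since $r<1$.

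The only genuinely new analytic input beyond the proof of Theorem \ref{thm:com1ofmodelQuasiBanach} is Lemma \ref{lem:weightedoneparcommutator}; once it is in hand, the scheme of Section \ref{sec:quasiviarest} transfers essentially verbatim, and the complexity bound upgrades from $(1+\max(k_i,v_i))$ to its square because Lemma \ref{lem:bmobound} is invoked twice on the free-average terms. The main obstacle I anticipate is organisational: the double expansion multiplies the number of summands, and for each combination one must identify the correct pair of slot operators, the matching pair of deterministic square functions, and verify the (routine but non-trivial) localisation property, together with additional bookkeeping for shifts whose Haar cancellation pattern differs from \eqref{eq:ResAveShiftMixed}, handled along the lines of the \emph{shifts of other type} discussion at the end of Section \ref{sec:quasiviarest}. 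The statement for $\mathbb{E}_{\omega,\omega'}[b_2,[b_1,S_{\omega,\omega'}]_1]_1$ is obtained by exchanging the role of the second and first slots in the outer expansion.
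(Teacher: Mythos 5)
Your proposal is correct and follows essentially the same route as the paper: Banach range via Theorem \ref{thm:com2ofmodelBanach} plus a restricted weak type estimate proved by the double martingale expansion, exceptional sets built from products of the deterministic square functions of Lemma \ref{lem:DetSquareShift} (fed by the weighted bounds \eqref{eq:wforlargeA1}--\eqref{eq:wforsmallA} and the new Lemma \ref{lem:weightedoneparcommutator}), the localisation property for the composite slot operators, and the interpolation/duality endgame of Section \ref{sec:quasiviarest}. The only cosmetic difference is organisational: the paper delays part of the second expansion, adding and subtracting $A^{\omega,\omega'}_{K\times V}(f_1,f_2,b_2a^1_{i,\omega}(b_1,1_{\wt\Omega_u}f_3))$ inside the layer sum so the linear commutator $[b_2,a^1_{i,\omega}(b_1,\cdot)]$ appears only after restricting $f_3$, whereas you expand fully upfront; both yield the same list of terms and the same squared complexity factor.
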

As in Section \ref{sec:quasiviarest} we need to prove the following:
Given $p,q \in (1, \infty)$ and $r \in (1/2,1)$ satisfying $1/p+1/q=1/r$, 
$f_1 \in L^p(\R^{n+m})$, $f_2 \in L^q(\R^{n+m})$ and a set $E \subset \R^{n+m}$ with $0 < |E| < \infty$,
there exists a subset $E' \subset E$ such that $|E'| \ge |E|/2$ and such that for all functions $f_3$
satisfying $|f_3| \le 1_{E'}$ there holds
\begin{equation}\label{eq:ResAveShift2}
\begin{split}
| \langle \mathbb{E}_{\omega, \omega'}&[b_2,[b_1,S_{\omega, \omega'}]_1]_2(f_1, f_2),f_3 \rangle | \\
&\lesssim (1+\max(k_i, v_i))^2 \|f_1\|_{L^p(\R^{n+m})} \|f_2\|_{L^q(\R^{n+m})}|E|^{1/r'}.
\end{split}
\end{equation}

For definiteness let $S_{\omega, \omega'}$ be again of the form \eqref{eq:ResAveShiftMixed}:
$$
\langle S_{\omega, \omega'}(f_1, f_2), f_3 \rangle = \sum_{\substack{K \in \calD^n_0 \\ V \in \calD^m_0}}  A^{\omega, \omega'}_{K \times V}(f_1, f_2, f_3) 
$$
where
\begin{equation*}
\begin{split}
A^{\omega, \omega'}_{K \times V}(f_1, f_2, f_3) 
= \sum_{\substack{I_1, I_2, I_3 \in \calD^n_\omega \\ I_i^{(k_i)} = K+\omega}} &
\sum_{\substack{J_1, J_2, J_3 \in \calD^m_{\omega'} \\ J_i^{(v_i)}  = V+\omega'}} 
a^{\omega,\omega'}_{K+\omega, V+\omega', (I_i), (J_j)}  \\
&\times  \langle f_1,  h_{I_1}^0 \otimes   h_{J_1}\rangle \langle f_2,  h_{I_2} \otimes  h_{J_2}\rangle  
\langle f_3, h_{I_3} \otimes  h_{J_3}^0 \rangle.
\end{split}
\end{equation*}

Similarly as in the Banach range case above
we start with the identity given by Lemma \ref{lem:case3}:
\begin{align*}
&\langle [b_1,S_{\omega, \omega'}]_1(f_1,f_2),f_3 \rangle \\
&= \sum_{i=1}^2 \sum_{K, V, (I_i), (J_j)} a_{K+\omega, \ldots}^{\omega, \omega'} \langle f_1, h_{I_1}^0 \otimes  h_{J_1}\rangle \langle f_2, h_{I_2} \otimes h_{J_2}\rangle
\langle a_{i,\omega}^1(b_1,f_3), h_{I_3} \otimes h_{J_3}^0 \rangle \\
&+  \sum_{K, V, (I_i), (J_j)} a_{K+\omega, \ldots}^{\omega, \omega'} 
\langle f_1, h_{I_1}^0 \otimes  h_{J_1}\rangle \langle f_2, h_{I_2} \otimes h_{J_2}\rangle
\bla (\langle b_1 \rangle_{I_3,1}-\langle b_1 \rangle_{I_3 \times J_3}) \langle f_3 ,h_{I_3} \rangle_1, h^0_{J_3} \bra \\
&- \sum_{i=1}^2 \sum_{K, V, (I_i), (J_j)} a_{K+\omega, \ldots}^{\omega, \omega'}\langle a_{i,\omega'}^2(b_1,f_1), h_{I_1}^0 \otimes  h_{J_1}\rangle \langle f_2, h_{I_2} \otimes h_{J_2}\rangle \langle f_3, h_{I_3} \otimes h_{J_3}^0 \rangle \\
&+ \sum_{K, V, (I_i), (J_j)} a_{K+\omega, \ldots}^{\omega, \omega'} \bla (\langle b_1 \rangle_{I_1 \times J_1} - \langle b_1 \rangle_{J_1, 2})\langle f_1, h_{J_1}\rangle_2, h_{I_1}^0\bra \langle f_2, h_{I_2} \otimes h_{J_2}\rangle \langle f_3, h_{I_3} \otimes h_{J_3}^0 \rangle \\
&+  \sum_{K, V, (I_i), (J_j)} a_{K+\omega, \ldots}^{\omega, \omega'} [\langle b \rangle_{I_3 \times J_3} - \langle b \rangle_{I_1 \times J_1}]
 \langle f_1, h_{I_1}^0 \otimes  h_{J_1}\rangle \langle f_2, h_{I_2} \otimes h_{J_2}\rangle  
\langle f_3, h_{I_3} \otimes h_{J_3}^0 \rangle \\
&=: I + II + III + IV + V.
\end{align*}

We fix $i \in \{1,2\}$ and start considering the corresponding term of $I$, namely
$$
\sum_{\substack{K \in \calD^n_0 \\ V \in \calD^m_0}}  A^{\omega, \omega'}_{K \times V}(f_1, f_2, a_{i,\omega}^1(b_1, f_3)),
$$
and its contribution to $[b_2, [b_1, U]_1]_2$. This leads to the need to prove the following lemma.
\begin{lem}\label{lem:ResAveShiftEx2}
Let $\|b_1\|_{\bmo(\R^{n+m})} = \|b_2\|_{\bmo(\R^{n+m})} = 1$ and let $p,q \in (1, \infty)$ and $r \in (1/2,1)$ satisfy $1/p+1/q=1/r$. 
Suppose $f_1 \in L^p(\R^{n+m})$, $f_2 \in L^q(\R^{n+m})$ and  $E \subset \R^{n+m}$ with $0 < |E| < \infty$.
Then there exists a subset $E' \subset E$ with $|E'| \ge \frac{99}{100} |E|$  so that for all functions $f_3$
satisfying $|f_3| \le 1_{E'}$ there holds
\begin{equation}\label{eq:ResAveShiftExEst2}
\begin{split}
\Big|\E_{\omega,\omega'}\sum_{\substack{K \in \calD^n_0 \\ V \in \calD^m_0}} &
[A^{\omega, \omega'}_{K \times V}(f_1, f_2, a_{i,\omega}^1(b_1, b_2f_3)) 
- A^{\omega, \omega'}_{K \times V}(f_1, b_2f_2, a_{i,\omega}^1(b_1, f_3))] 
 \Big| \\
 &\lesssim (1+\max(k_i,v_i))\|f_1\|_{L^p(\R^{n+m})} \|f_2\|_{L^q(\R^{n+m})}|E|^{1/r'}.
\end{split}
\end{equation}
\end{lem}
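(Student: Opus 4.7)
The plan is to reduce the expression to the setting handled in Lemma \ref{lem:ResAveShiftEx} and its variants in Section \ref{sec:quasiviarest} by performing a second commutator decomposition. First, I would add and subtract $A^{\omega, \omega'}_{K \times V}(f_1, f_2, b_2 a^1_{i,\omega}(b_1, f_3))$ inside the summation so as to split the integrand into
\begin{align*}
&-A^{\omega, \omega'}_{K \times V}\bigl(f_1, f_2, [b_2, a^1_{i,\omega}(b_1, \cdot)] f_3\bigr) \\
&\quad + \bigl[A^{\omega, \omega'}_{K \times V}(f_1, f_2, b_2 \tilde g_\omega) - A^{\omega, \omega'}_{K \times V}(f_1, b_2 f_2, \tilde g_\omega)\bigr],
\end{align*}
where $\tilde g_\omega := a^1_{i,\omega}(b_1, f_3)$. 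The first line involves the linear commutator $[b_2, a^1_{i,\omega}(b_1, \cdot)]$, which is $L^p(w)$-bounded uniformly in $\omega$ by Lemma \ref{lem:weightedoneparcommutator}. This allows me to form the associated square function in the manner of Lemma \ref{lem:DetSquareShift} and then to run the restricted weak type proof of Lemma \ref{lem:ResAveShiftEx} verbatim, with $\Omega_u$ defined via $S^2_{v_1} f_1 \cdot S^{k_2, v_2} f_2$ and with $S^1_{k_3, [b_2, a^1_{b_1}]}$ replacing $S^1_{k_3, \varphi_b^1}$. No complexity factor is produced at this step.

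For the second piece, I would apply the symmetric variant of Lemma \ref{lem:case3} --- corresponding to $f_2$ paired with cancellative--cancellative Haar functions and $\tilde g_\omega$ paired with cancellative--noncancellative Haar functions --- to expand $b_2 f_2$ and $b_2 \tilde g_\omega$ simultaneously. This produces four groups of terms inside $A^{\omega,\omega'}_{K \times V}$: a sum over $j=1,2$ with $\langle a^1_j(b_2, \tilde g_\omega), h_{I_3} \otimes h^0_{J_3}\rangle$ in the third slot; a term with the pairing $\bla (\langle b_2\rangle_{I_3,1} - \langle b_2\rangle_{I_3 \times J_3}) \langle \tilde g_\omega, h_{I_3}\rangle_1, h^0_{J_3}\bra$ in the third slot; a sum over $j=1,\dots,8$ with $A_{j,\omega,\omega'}(b_2, f_2)$ replacing $f_2$; and finally the free-average term $[\langle b_2\rangle_{I_3 \times J_3} - \langle b_2\rangle_{I_2 \times J_2}] A^{\omega,\omega'}_{K \times V}(f_1, f_2, \tilde g_\omega)$.

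Each of these four groups is treated by one of the restricted weak type templates already exhibited in Section \ref{sec:quasiviarest}. For the first group one uses the square function $S^1_{k_3}$ associated to the composition $a^1_j(b_2, \cdot) \circ a^1_{i,\omega}(b_1, \cdot)$, whose $L^p(w)$-boundedness follows from iterated use of \eqref{eq:wforsmallA}. For the second group Lemma \ref{lem:maximalbound} gives the pointwise domination by $\bla \varphi_{\calD^m_{\omega'}, b_2}(\tilde g_\omega), h_{I_3} \otimes \frac{1_{J_3}}{|J_3|}\bra$, after which the corresponding square function is bounded by composing \eqref{eq:wforsmallA} with Lemma \ref{lem:bmaxbounds}. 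For the third group one uses the square function $S^{k_2, v_2}$ formed from the family $\{A_{j,\omega,\omega'}(b_2, \cdot)\}$, whose weighted boundedness is furnished by \eqref{eq:wforlargeA1}--\eqref{eq:wforlargeA2}. For the fourth group Lemma \ref{lem:bmobound} supplies the single factor $1+\max(k_i, v_i)$, after which the standard square function estimate using $S^2_{v_1}$, $S^{k_2,v_2}$ and $S^1_{k_3, a^1_{i,b_1}}$ applies. The main technical obstacle I foresee is the bookkeeping: verifying that each composed operator above satisfies simultaneously both the weighted $L^p$-bound needed for the corresponding square function and the localisation to $I_3 \times J_3$ that allows the truncation $f_3 \leadsto 1_{\wt \Omega_u} f_3$. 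Once these are checked, the only source of the claimed factor $1+\max(k_i, v_i)$ is the fourth group, matching the target bound.
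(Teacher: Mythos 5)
Your decomposition is essentially the paper's own proof: the initial add-and-subtract of $A^{\omega,\omega'}_{K\times V}(f_1, f_2, b_2\, a^1_{i,\omega}(b_1, f_3))$, the treatment of the resulting linear commutator $[b_2, a^1_{i,\omega}(b_1,\cdot)]$ via Lemma~\ref{lem:weightedoneparcommutator}, the four-group expansion of the remaining difference, and locating the complexity factor exclusively in the free-average term all match the paper exactly. Two slips worth flagging. First, the identity you invoke for the second piece is the symmetric form of Lemma~\ref{lem:case2}, not of Lemma~\ref{lem:case3}: $f_2$ carries two cancellative Haar functions and $\tilde g_\omega$ carries exactly one non-cancellative one, which is the mixed/pure-cancellative configuration of Lemma~\ref{lem:case2}, not the mixed/mixed one of Lemma~\ref{lem:case3} (your description of the resulting four groups is nevertheless correct). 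Second, the exceptional sets: your stated choice $\Omega_u = \{S^2_{v_1}f_1\, S^{k_2,v_2}f_2 > C_0 2^{-u}|E|^{-1/r}\}$ does not control the quantity $S^2_{v_1}f_1\cdot S^{k_2,v_2}_{A_{j,b_2}}f_2$ appearing after the estimate for the group where $f_2$ is replaced by $A_{j,\omega,\omega'}(b_2,f_2)$; the paper resolves this by folding $\sum_{j=1}^8 S^2_{v_1}f_1\, S^{k_2,v_2}_{A_{j,b_2}}f_2$ into the defining inequality for $\Omega_u$ from the outset, producing a single $E'$ that works for all terms at once. You could instead run a separate $\Omega_u$ and $E'$ for each group and intersect (as in Section~\ref{sec:quasiviarest}), but this should be made explicit.
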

\begin{proof}
We assume
\begin{align*}
\Big\| S^2_{v_1}f_1 S^{k_2,v_2}f_2 + \sum_{j=1}^8  S^2_{v_1}f_1 S^{k_2,v_2}_{A_{j,b_2}} f_2 \Big\|_{L^r} = 1, 
\end{align*}
where $A_{j,b_2}$ denotes the family $\{A_{j,\omega,\omega'}(b_2, \cdot)\}_{\omega,\omega'}$, and
the square functions are defined as in Lemma \ref{lem:DetSquareShift}. Define
$$
\Omega_u =\Big\{S^2_{v_1}f_1 S^{k_2,v_2}f_2 +  \sum_{j=1}^8 S^2_{v_1}f_1 S^{k_2,v_2}_{A_{j,b_2}} f_2
> C_0 2^{-u}|E|^{-1/r}\Big\}, \quad u \ge 0,
$$
and 
$$
\wt \Omega_u = \{M1_{\Omega_u}> c_1\},
$$
where $c_1>0$ is a small enough dimensional constant. Then we can
choose $C_0=C_0(c_1)$ so large that the set $E':=E \setminus \wt \Omega_0$ satisfies $|E'|  \ge \frac{99}{100} |E|$.
Then we define the collections
$$
\widehat \calR_u =\Big\{ R \in \calD_0 \colon |R \cap \Omega_u | \ge \frac{|R|}{2}\Big\},
$$
where $\calD_0 = \calD^n_0 \times \calD^m_0$, and set $\calR_u = \widehat\calR_u \setminus \widehat \calR_{u-1}$ for $u \ge 1$. 
Fix now some function $f_3$ such that $|f_3| \le 1_{E'}$.
Lets abbreviate
$$
\Lambda_{K \times V}^{\omega, \omega'}(f_1,f_2,f_3) = 
A^{\omega, \omega'}_{K \times V}(f_1, f_2, a_{i,\omega}^1(b_1, b_2f_3)) 
- A^{\omega, \omega'}_{K \times V}(f_1, b_2f_2, a_{i,\omega}^1(b_1, f_3)).
$$
Notice  the localisation property
$$
\Lambda_{K \times V}^{\omega, \omega'}(f_1,f_2,f_3)
=\Lambda_{K \times V}^{\omega, \omega'}(f_1,f_2,1_{(K+\omega) \times (V+\omega')} f_3).
$$
Based on this, using an argument as in the proof of Lemma \ref{lem:DetSquareShift}, and splitting 
$\Lambda_{K \times V}^{\omega, \omega'}(f_1,f_2,f_3)$ as in \eqref{eq:Com2Split1} and
\eqref{eq:Com2Split2}, we see that we may write
\begin{align*}
\E_{\omega,\omega'}\sum_{\substack{K \in \calD^n_0 \\ V \in \calD^m_0}} 
 \Lambda^{\omega,\omega'}_{K \times V} (f_1,f_2,f_3 )
=\sum_{u=1}^\infty \sum_{K \times V\in \calR_u} 
\E_{\omega,\omega'} \Lambda^{\omega,\omega'}_{K \times V} (f_1,f_2, 1_{\wt \Omega_u} f_3 ).
\end{align*}

We now fix $u$, and our goal is to prove
$$
\sum_{K \times V\in \calR_u} 
\E_{\omega,\omega'} |\Lambda^{\omega,\omega'}_{K \times V} (f_1,f_2, 1_{\wt \Omega_u} f_3 )| \lesssim
(1+\max(k_i,v_i))2^{-u(1-r)}|E|^{1/r'}.
$$
By adding and subtracting
\begin{align*}
A^{\omega, \omega'}_{K \times V}(f_1, f_2, b_2a_{i,\omega}^1(b_1, 1_{\wt \Omega_u}f_3)),
\end{align*}
we see that
\begin{equation}\label{eq:Com2Split1}
\begin{split}
\Lambda^{\omega,\omega'}_{K \times V} (f_1,f_2, 1_{\wt \Omega_u} f_3 ) 
&= -A^{\omega, \omega'}_{K \times V}(f_1, f_2, [b_2,a_{i,\omega}^1(b_1, \cdot)](1_{\wt \Omega_u}f_3)) \\
&+ A^{\omega, \omega'}_{K \times V}(f_1, f_2, b_2a_{i,\omega}^1(b_1, 1_{\wt \Omega_u}f_3)) \\
&- A^{\omega, \omega'}_{K \times V}(f_1, b_2f_2, a_{i,\omega}^1(b_1, 1_{\wt \Omega_u}f_3)).
\end{split}
\end{equation}
Using the symmetric form of Lemma \ref{lem:case2} we have that the last difference
$$
A^{\omega, \omega'}_{K \times V}(f_1, f_2, b_2a_{i,\omega}^1(b_1, 1_{\wt \Omega_u}f_3)) - A^{\omega, \omega'}_{K \times V}(f_1, b_2f_2, a_{i,\omega}^1(b_1, 1_{\wt \Omega_u}f_3))
$$
equals 
\begin{equation}\label{eq:Com2Split2}
\begin{split}
\sum_{j=1}^2 &A^{\omega, \omega'}_{K \times V}(f_1, f_2, a_{j,\omega}^1(b_2,a_{i,\omega}^1(b_1, 1_{\wt \Omega_u}f_3))) \\
&- \sum_{j=1}^8 A^{\omega, \omega'}_{K \times V}(f_1, A_{j, \omega, \omega'}(b_2,f_2), a_{i,\omega}^1(b_1, 1_{\wt \Omega_u}f_3)) \\
&+\sum_{\substack{I_1, I_2, I_3 \in \calD^n_\omega \\ I_i^{(k_i)} = K+\omega}} 
\sum_{\substack{J_1, J_2, J_3 \in \calD^m_{\omega'} \\ J_i^{(v_i)}  = V+\omega'}}
\Big[a^{\omega,\omega'}_{K+\omega,\dots} \langle f_1,h^0_{I_1}\otimes h_{J_1} \rangle 
\langle f_2,h_{I_2}\otimes h_{J_2} \rangle  \\
& \hspace{2cm}\times\bla(\langle b_2 \rangle_{I_3,1}-\langle b_2 \rangle_{I_3\times J_3}) 
\langle a_{i,\omega}^1(b_1, 1_{\wt \Omega_u}f_3),h_{I_3} \rangle_1, h_{J_3}^0 \bra \Big]\\
&+\sum_{\substack{I_1, I_2, I_3 \in \calD^n_\omega \\ I_i^{(k_i)} = K+\omega}} 
\sum_{\substack{J_1, J_2, J_3 \in \calD^m_{\omega'} \\ J_i^{(v_i)}  = V+\omega'}}
\Big[a^{\omega,\omega'}_{K+\omega,\dots}
(\langle b_2 \rangle_{I_3 \times J_3}-\langle b_2 \rangle_{I_2 \times J_2})
 \langle f_1,h^0_{I_1}\otimes h_{J_1} \rangle \\
& \hspace{2cm} \times \langle f_2,h_{I_2}\otimes h_{J_2} \rangle
\langle a_{i,\omega}^1(b_1, 1_{\wt \Omega_u}f_3),h_{I_3} \otimes h_{J_3}^0 \rangle \Big].
\end{split}
\end{equation}
We consider the first term of \eqref{eq:Com2Split1} and the terms from  \eqref{eq:Com2Split2} separately.
These are all handled quite similarly, the only difference being what square functions appear.

Let's  first take a look at the term from \eqref{eq:Com2Split1}. Denote the family of operators $\{ [b_2, a^1_{i,\omega}(b_1, \cdot)]\}_\omega$ 
by $[b_2,a^1_{i,b_1}]$.
Estimating similarly as in 
\eqref{eq:SquaresAppear}, we see that
\begin{equation*}
\begin{split}
\sum_{K\times V \in \calR_u}  & \E_{\omega,\omega'} 
\big| A^{\omega, \omega'}_{K \times V}(f_1, f_2, [b_2,a_{i,\omega}^1(b_1, \cdot)](1_{\wt \Omega_u}f_3)) \big| \\
& \lesssim \int_{\wt \Omega_u \setminus \Omega_{u-1}} S^{2}_{v_1}f_1 S^{k_2,v_2} f_2 S^1_{k_1,[b_2,a^1_{i,b_1}]}(1_{\wt \Omega_u}f_3).
\end{split}
\end{equation*}
From here the estimate can be concluded in the familiar way, using that 
$$
S^{2}_{v_1}f_1 S^{k_2,v_2} f_2 \lesssim 2^{-u}|E|^{-1/r}
$$
in the complement of $\Omega_{u-1}$ and that the square function $S^1_{k_1,[b_2,a^1_{i,b_1}]}$ is bounded. The boundedness
of this square function follows from Lemma \ref{lem:weightedoneparcommutator} and Lemma \ref{lem:DetSquareShift}.

Similarly,  consider for instance the terms from the second line of \eqref{eq:Com2Split2}. For fixed $j=1,\dots,8$ we have
\begin{equation*}
\begin{split}
 \sum_{K\times V \in \calR_u}  & \E_{\omega,\omega'} 
\big| A^{\omega, \omega'}_{K \times V}(f_1, A_{j, \omega, \omega'}(b_2,f_2), a_{i,\omega}^1(b_1, 1_{\wt \Omega_u}f_3))\big| \\
& \lesssim \int_{\wt \Omega_u \setminus \Omega_{u-1}} S^{2}_{v_1}f_1 S^{k_2,v_2}_{A_{j,b_2}} f_2 S^1_{k_3,a^1_{i,b_1}}(1_{\wt \Omega_u}f_3),
\end{split}
\end{equation*}
and the rest is finished as usual.

As a final example we consider  the terms from the third line of \eqref{eq:Com2Split2}. We have using Lemma \ref{lem:maximalbound} that
\begin{equation*}
\begin{split}
\big|\bla(\langle b_2 \rangle_{I_3,1}-\langle b_2 \rangle_{I_3\times J_3}) 
& \langle a_{i,\omega}^1(b_1, 1_{\wt \Omega_u}f_3),h_{I_3} \rangle_1, h_{J_3}^0 \bra \big| \\
&\lesssim \langle \varphi_{\omega, b_2} (a_{i,\omega}^1(b_1, 1_{\wt \Omega_u}f_3)), h_{I_3}\otimes h_{J_3}^0  \rangle,
\end{split}
\end{equation*}
where $\varphi_{\omega, b_2} := \varphi_{\calD^n_{\omega}, b_2}$.
Therefore, there holds that
\begin{equation*}
\begin{split}
  \sum_{K\times V \in \calR_u}  & \E_{\omega,\omega'}
\Big|\sum_{\substack{I_1, I_2, I_3 \in \calD^n_\omega \\ I_i^{(k_i)} = K+\omega}} 
\sum_{\substack{J_1, J_2, J_3 \in \calD^m_{\omega'} \\ J_i^{(v_i)}  = V+\omega'}}
\Big[a^{\omega,\omega'}_{K+\omega,\dots} \langle f_1,h^0_{I_1}\otimes h_{J_1} \rangle 
\langle f_2,h_{I_2}\otimes h_{J_2} \rangle  \\
& \hspace{2cm}\times\bla(\langle b_2 \rangle_{I_3,1}-\langle b_2 \rangle_{I_3\times J_3}) 
\langle a_{i,\omega}^1(b_1, 1_{\wt \Omega_u}f_3),h_{I_3} \rangle_1, h_{J_3}^0 \bra \Big] \Big| \\
& \lesssim \int_{\wt \Omega_u \setminus \Omega_{u-1}} S^{2}_{v_1}f_1 S^{k_2,v_2} f_2 S^1_{k_3,\varphi^1_{b_2} a^1_{i,b_1}}(1_{\wt \Omega_u}f_3),
\end{split}
\end{equation*}
where we wrote $\varphi^1_{b_2} a^1_{i,b_1}$ to mean the family $\{\varphi_{\omega, b_2} a^1_{i,\omega}(b_1,\cdot)\}_\omega$.
Once again, one can finish as before.

The remaining terms, that is the first and fourth ones from \eqref{eq:Com2Split2} go in the same way. Notice that the fourth term produces the factor 
$(1+\max(k_i,v_i))$ to the final estimate.

\end{proof}
The contributions of the terms $II, \ldots, IV$ to the commutator $[b_2, [b_1, U]_1]_2$ can also be handled, and they are easier (the localisation property is more readily available).
Moreover, the fact that we assumed $S_{\omega, \omega'}$ to be of the form \eqref{eq:ResAveShiftMixed} did not play a big role: the other
forms can be handled similarly. Therefore, we get \eqref{eq:ResAveShift2}. With the Banach range boundedness and the usual interpolation this gives us
$$
 \|\mathbb{E}_{\omega, \omega'}[b_2,[b_1,S_{\omega, \omega'}]_1]_2(f_1, f_2)\|_{L^r(\R^{n+m})} 
 \lesssim (1+\max(k_i, v_i))^2 \|f_1\|_{L^p(\R^{n+m})} \|f_2\|_{L^q(\R^{n+m})}
$$
whenever $1 < p, q < \infty$ and $1/2 < r < \infty$ satisfy $1/p+1/q = 1/r$. We can similarly prove
$$
 \|\mathbb{E}_{\omega, \omega'}[b_2,[b_1,S_{\omega, \omega'}]_1]_1(f_1, f_2)\|_{L^r(\R^{n+m})} 
 \lesssim (1+\max(k_i, v_i))^2 \|f_1\|_{L^p(\R^{n+m})} \|f_2\|_{L^q(\R^{n+m})}
$$
in the same range. Then e.g. the case $p=\infty$ for $[b_2,[b_1,S_{\omega, \omega'}]_1]_2$ follows by using the formula
$$
[b_2,[b_1,S_{\omega, \omega'}]_1]_2^{1*} = [b_2, [b_1, S_{\omega, \omega'}^{1*}]_1]_1 -  [b_2, [b_1, S_{\omega, \omega'}^{1*}]_1]_2.
$$
We have proved Theorem \ref{thm:com2ofmodelQuasiBanach}. We end the paper by stating the corresponding corollary for paraproduct free singular integrals $T$.

\begin{thm}\label{thm:com2ofmodelQuasiBanach}
Let $\|b_1\|_{\bmo(\R^{n+m})} = \|b_2\|_{\bmo(\R^{n+m})} = 1$, and let $1 < p, q \le \infty$ and $1/2 < r < \infty$ satisfy $1/p+1/q = 1/r$.
Let $T$ be a bilinear bi-parameter singular integral satisfying the assumptions of Theorem \ref{thm:rep} and assume also
that $T$ is free of paraproducts.
Then we have
$$
 \|[b_2,[b_1,T]_1]_2(f_1, f_2)\|_{L^r(\R^{n+m})} 
 \lesssim \|f_1\|_{L^p(\R^{n+m})} \|f_2\|_{L^q(\R^{n+m})},
$$
and similarly for $[b_2,[b_1,T]_1]_1$.
\end{thm}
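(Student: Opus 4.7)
The plan is to mimic the proof of Theorem \ref{thm:main2}, using the bilinear bi-parameter representation (Theorem \ref{thm:rep}) to reduce to the shift case and then invoking the already established Theorem \ref{thm:com2ofmodelQuasiBanach} for shifts. Since $T$ is paraproduct free, Theorem \ref{thm:rep} furnishes a pointwise identity of the form
\begin{equation*}
T(f_1,f_2) = C_T \mathop{\sum_{k \in \Z_+^3}}_{v \in \Z_+^3} \alpha_{k,v} \sum_u \E_{\omega,\omega'} S^{v}_{k,u,\calD^n_\omega,\calD^m_{\omega'}}(f_1,f_2),
\end{equation*}
where every $S^{v}_{k,u,\calD^n_\omega,\calD^m_{\omega'}}$ is a bilinear bi-parameter shift, $\alpha_{k,v} = 2^{-\alpha \max k_i/2} 2^{-\alpha \max v_j/2}$, and the $u$ summation is finite. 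Since the commutator brackets $[b_1,\cdot]_1$ and $[b_2,\cdot]_2$ are linear in the operator slot, they commute with the averaging and the summation, so we obtain
\begin{equation*}
[b_2,[b_1,T]_1]_2(f_1,f_2) = C_T \mathop{\sum_{k \in \Z_+^3}}_{v \in \Z_+^3} \alpha_{k,v} \sum_u \E_{\omega,\omega'}[b_2,[b_1,S^{v}_{k,u,\calD^n_\omega,\calD^m_{\omega'}}]_1]_2(f_1,f_2).
\end{equation*}

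Next I would take $L^r$ (quasi-)norms of both sides. If $r \ge 1$ I apply the triangle inequality directly; if $r \in (1/2,1)$ I use the $r$-subadditivity $\|\sum_i g_i\|_{L^r}^r \le \sum_i \|g_i\|_{L^r}^r$, exactly as in the proof of Theorem \ref{thm:main2}. In either case the inner norm is controlled by Theorem \ref{thm:com2ofmodelQuasiBanach} (the shift version proved just above), which gives
\begin{equation*}
\bigl\|\E_{\omega,\omega'}[b_2,[b_1,S^{v}_{k,u,\calD^n_\omega,\calD^m_{\omega'}}]_1]_2(f_1,f_2)\bigr\|_{L^r(\R^{n+m})} \lesssim (1+\max(k_i,v_i))^2 \|f_1\|_{L^p}\|f_2\|_{L^q},
\end{equation*}
with the constant independent of $\omega, \omega'$, and with a polynomial dependence on the complexity $(k,v)$.

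It remains to sum the series. Either the factor $\alpha_{k,v}$ (when $r \ge 1$) or the factor $\alpha_{k,v}^r$ (when $r < 1$) decays exponentially in $\max k_i$ and $\max v_j$, and since there are only finitely many terms in $u$ and the number of six-tuples with $\max k_i = N$ grows only polynomially in $N$, the geometric decay absorbs both the polynomial complexity factor $(1+\max(k_i,v_i))^2$ and the combinatorial growth. This produces a finite absolute constant and yields
\begin{equation*}
\|[b_2,[b_1,T]_1]_2(f_1,f_2)\|_{L^r(\R^{n+m})} \lesssim \|f_1\|_{L^p(\R^{n+m})} \|f_2\|_{L^q(\R^{n+m})}.
\end{equation*}
The argument for $[b_2,[b_1,T]_1]_1$ is entirely analogous. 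The only genuine point requiring care is the $r \in (1/2,1)$ case, where one must make sure that the polynomial growth coming from iterated commutators does not defeat the geometric decay after it is raised to the power $r$; this is where the explicit $2^{-\alpha \max k_i/2}2^{-\alpha \max v_j/2}$ form of $\alpha_{k,v}$ (with $\alpha$ determined by the kernel regularity and hence independent of $r$) is essential, but there is no real obstacle since any polynomial is dominated by the geometric factor for all $r > 1/2$.
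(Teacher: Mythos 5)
Your proposal is correct and is essentially the paper's own route: the paper proves the averaged shift estimate $\|\E_{\omega,\omega'}[b_2,[b_1,S_{\omega,\omega'}]_1]_2(f_1,f_2)\|_{L^r} \lesssim (1+\max(k_i,v_i))^2\|f_1\|_{L^p}\|f_2\|_{L^q}$ and then deduces the result for paraproduct free $T$ exactly as in Theorem \ref{thm:main2}, i.e.\ via the representation theorem, triangle inequality or $r$-subadditivity, and absorption of the polynomial complexity factor by the exponential decay $\alpha_{k,v}$. You correctly keep the expectation $\E_{\omega,\omega'}$ inside the norm (the estimate is for \emph{averages} of commutators of shifts, which is the delicate point in the quasi--Banach range), so nothing is missing.
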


\end{document}